\documentclass[11pt,leqno]{amsart}

\usepackage{anysize}
\marginsize{3.5cm}{3.5cm}{3cm}{3cm}

\usepackage[utf8]{inputenc}

\usepackage{amsmath}
\usepackage{amsfonts,amssymb}
\usepackage{enumerate}
\usepackage{mathrsfs}
\usepackage{amsthm}
\usepackage{xcolor}
\usepackage[normalem]{ulem}
\usepackage{enumitem}
\usepackage{bm}

\parskip=0pt
\parindent=20pt

\numberwithin{equation}{section}
\usepackage{etex}
\usepackage{pstricks-add}
\usepackage{latexsym}
\usepackage{mathrsfs}
\usepackage{pictexwd}
\textwidth=6in \textheight=9in \topmargin=-0.5cm
\oddsidemargin=0.5cm \evensidemargin=0.5cm
\usepackage{amsmath}
\usepackage{fancybox,color}
\usepackage{enumerate}
\usepackage[utf8]{inputenc}
\usepackage[colorlinks]{hyperref}
\usepackage{eurosym}
\usepackage{color}
\usepackage{url}
\usepackage{pgf,tikz}
\usepackage{mathrsfs}\usetikzlibrary{arrows}

\numberwithin{figure}{section}

\newcommand{\kom}[1]{}
\renewcommand{\kom}[1]{{\bf [#1]}}

\newcommand{\be}{\begin{equation}}
\newcommand{\ee}{\end{equation}}

\addtolength{\parskip}{0.15cm}
\usepackage{hyperref}
\hypersetup{
    colorlinks=true,                          
    linkcolor=black, 
    citecolor=black, 
    urlcolor=black  } 
 \def\1{\raisebox{2pt}{\rm{$\chi$}}}

\newtheorem{theorem}{Theorem} 

\newtheorem{remark}{Remark}[section]

\theoremstyle{plain}
\newtheorem{theo}{Theorem}[section]
\newtheorem*{theo*}{Theorem}
\newtheorem{prop}[theo]{Proposition}
\newtheorem{lemm}[theo]{Lemma}

\newtheorem{defi}[theo]{Definition}
\theoremstyle{definition}

\newcommand{\R}{{\mathbb R}}

\newcommand{\N}{{\mathbb N}}

 \newcommand{\eps}{\varepsilon}
 \def\1{\raisebox{2pt}{\rm{$\chi$}}}
 

\usepackage{pstricks-add}

\newcommand{\ds}{\displaystyle} 
\newcommand{\ts}{\textstyle}

\DeclareMathOperator{\cn}{div}

\DeclareMathOperator{\di}{d}
\DeclareMathOperator{\diff}{d}

\def\dsigma{\diff \! \sigma}

\def\dt{\diff \! t}
\def\dx{\diff \! x}

\def\dz{\diff \! z}

\def\dy{\diff \! y}

\def\eps{\varepsilon}

\def\le{\leq}

\newcommand{\mH}{\mathcal{H}}

\begin{document}

\title[cell motion model with boundary signal production]{Convergence, concentration and critical mass phenomena 
in a model of cell motion with \\
boundary signal production}

\author[Meunier]{Nicolas Meunier}%
\address{LaMME, UMR 8071 CNRS, Universit\'e \'Evry Val d'Essonne, Evry, France}
\email{nicolas.meunier@univ-evry.fr}

\author[Souplet]{Philippe Souplet}%
\address{LAGA, UMR 7539 CNRS, Universit\'e Sorbonne Paris Nord, Villetaneuse, France}
\email{souplet@math.univ-paris13.fr}

\thanks{{\bf MSC:} 35B40; 35B44; 35B33; 35K20; 92C17}

\begin{abstract}	
We consider a model of cell motion with boundary signal production which describes some aspects of eukaryotic cell migration. Generic polarity markers located in the cell are transported by actin which they help to polymerize, i.e. the actin velocity depends on the asymmetry of the marker concentration profile. This leads to a problem whose mathematical novelty is the nonlinear and nonlocal destabilizing term in the boundary condition.   This model is a more rigorous version of a toy model first introduced in \cite{Maiuri2}.
	
We provide a detailed study of the qualitative properties of this model, namely local and global existence, convergence and blow-up of solutions. We start with a complete analysis of local existence-uniqueness in Lebesgue spaces. This turns out to be particularly relevant, in view of the mass conservation property and of the existence of $L^p$ Liapunov functionals, also obtained in this paper. 
 The optimal $L^p$ space of our local theory agrees with the scale invariance of the problem.

With the help of this local theory, we next study the global existence and convergence of solutions. In particular, in the case of quadratic nonlinearity, for any space dimension, we find an explicit, sharp mass threshold for global existence vs.~finite time blow-up of solutions. The proof is delicate, based on the possiblity to control the solution by means of the entropy function via an $\eps$-regularity type argument. This critical mass phenomenon is somehow reminiscent of the well-known situation for the $2d$ Keller-Segel system. For nonlinearitities with general power growth, under a suitable smallness condition on the initial data, we show that solutions exist globally and converge exponentially to a constant. 
	
As for the possibility of blow-up for large initial data, it turns out to occur only for nonlinearities with quadratic or superquadratic growth, whereas all solutions are shown to be global and bounded in the subquadratic case, thus revealing the existence of a sharp critical exponent for blow-up. Finally, we analyse some aspects of the blow-up asymptotics of solutions in time and space.	 
\end{abstract}	

\maketitle

\section{Introduction}

\subsection{Motivations and aims of the paper}

Cell migration is a fundamental process that is involved in many physiological and pathological functions (immune response, morphogenesis, cancer metastasis, etc). Therefore, understanding its key features despite the variety of cellular behaviors is a challenging task. Recently, a biophysical approach has shown that even though different migration modes coexist, cell migration follows a very general principle. 
	
 Following the first World Cell Race in 2011 \cite{Maiuri1},  it was possible to perform and then analyze extensive biological experiments in \cite{Maiuri2}. A directional correlation of the trajectories, exponentially correlated to the velocity of the cells, was then brought to light. The fastest cells have a more directional migration. This coupling was called UCSP (Universal Coupling between cell Speed and Persistence), see \cite{Maiuri2}. Other experiments suggest that it relies on actin fluxes, which exist in all cell types.

Actin filaments are essential components of the cytoskeleton of eukaryotic cells. These filaments are polar. They polymerize and grow at one end, and retract by depolymerizing at the other end. In migrating cells, the polymerizing ends are located near the cell membrane, which resists filament growth. Therefore, in the cell's frame of reference, actin filaments move away from the membrane, forming what are called retrograde actin flows.

Large, fast actin flows enhance cell polarity, and thus typical cell persistence time. Biological experiments suggest that this results from the advection of polarity signals, i.e. molecules involved in the regulation of cytoskeleton activity.

We consider here the case when 
the cell is a fixed bounded domain $\Omega$ of $\R^n$ 
and we are led  to the equation (see Section~\ref{Sec:Bio} for more details):
\begin{subequations}\label{pbmP}
\begin{align}
	\hfill\partial_t c&=\nabla\cdot\Bigl(\nabla c-c\ds\int_{\partial\Omega}f(c)\nu \dsigma\Bigr),
	 &&x\in\Omega,\ t>0,\\
	\hfill 0&=\Bigl(\nabla c-c\ds\int_{\partial\Omega}f(c)\nu \dsigma\Bigr)\cdot \nu,&&x\in\partial\Omega,\ t>0, \\
	\hfill c(x,0)&=c_0(x),&&x\in\Omega,
\end{align}
\end{subequations}
where $f: \R\to\R$ and $c$ denotes the concentration of some solute located in the cell, which might be any cytoplasmic protein controlling the active force-generation/adhesion machinery of the cell.

Note that the no-flux condition ensures the conservation of the total amount of solute, its mass, which will be noted  
	\[
	M:=\int_{\Omega} c_0(x) \dx. 
	\]
The convective vector field will be denoted by
$$A(t)=A(c(t)):= \int_{\partial\Omega}f(c)\nu \dsigma$$
(the variable $c$ being dropped when no confusion may arise).

The model \eqref{pbmP} describes the feedback loop between actin fluxes $A(t)$
and a molecular species, $c$: molecules are advected by actin fluxes and can be activated at the cell membrane. Activated molecules affect the speed of actin flow: the higher their concentration gradient across the cell, the faster the actin flow;
see Section~\ref{Sec:Bio} for more details on the model derivation.
The model \eqref{pbmP} is more realistic than the model initially introduced in \cite{Maiuri2}. Indeed, in this latter model the molecular dynamics is one-dimensional and in the chosen time scale the molecular dynamics is at equilibrium.

Throughout this paper, unless otherwise mentioned,
$\Omega$ is either a smoothly bounded domain of $\R^n$ ($n\ge 1$), or a finite cylinder
\be\label{defCyl}
\Omega=(0,L)\times B'_R,
\ee
where $L, R>0$ and $B'_R=B_R(O)\subset \R^{n-1}$ ($n\ge 2$) is the ball of radius $R$ centered 
at~$O$.
In all cases, we denote by $\Gamma$ the regular part of $\partial\Omega$, namely:
$$
\Gamma=
\begin{cases}
  \partial\Omega&\hbox{if $\Omega$ is smooth,}\\
  \noalign{\vskip 1mm}
\partial\Omega\setminus(\{0,L\}\times \partial B'_R)   &\hbox{if $\Omega$ is a cylinder \eqref{defCyl}.}
   \end{cases}$$
   Also $\nu$ denotes the outward unit normal and,
in case \eqref{defCyl}, the boundary conditions in \eqref{pbmP} are understood 
only on $\Gamma$.

In this model, the diffusion competes with the aggregating velocity, $\int_{\partial\Omega}f(c)\nu \dsigma$, and we address here the question whether or not this 
advection-diffusion equation could lead to concentration and possibly finite time blow-up, 
should the diffusion not be strong enough to balance the attractive velocity. More precisely, our main purpose here is to investigate the influence of the nonlinearity $f$ on the global or nonglobal solvability of problem \eqref{pbmP}.
To this end we will consider functions which typically grow like a power for large values of $c$, namely
$$f(c)\sim c^m\quad\hbox{as $c\to\infty$,\ for some $m>0$,}$$
and will highlight phenomena of convergence, critical mass and finite time blow-up with concentration on the boundary,
depending on the values of $m$.
In a forthcoming paper \cite{MeuSou2}, we will study the existence and properties of traveling waves for bounded increasing 
nonlinearities involving saturation effects, typically of the form $f(c)=Lc/(c+\alpha)$
with $L,\alpha>0$.

\smallskip

Let us briefly comment on the existing  literature related with \eqref{pbmP}. 
Non-linear and non-local convection-diffusion problems have raised many interesting and challenging mathematical issues, 
see \cite{QSb} e.g.~for a  partial, recent review. One of the most famous examples is the celebrated Keller-Segel model,
which was introduced in the early 70's by E.F. Keller and L.A. Segel in order to 
describe self-organization of amoebae and bacteria colonies \cite{Keller_Segel_70,Keller_Segel_71} (see \cite{Hillen_Painter,Horstmann,Blanchet} for modelling and analysis reviews). 
 One of the most intensively studied Keller-Segel type models, from the mathematical point of view, is the parabolic-elliptic 
cross-diffusive system 
\be\label{pbmKS}
\left\{\begin{aligned}
	\hfill\partial_t u&=\nabla\cdot\bigl(\nabla u-u\nabla v\bigr),&&x\in\Omega,\ t>0,\\
	\hfill 0&=\Delta v-kv+u,&&x\in\Omega,\ t>0,\\
	\hfill \partial_\nu u=\partial_\nu v&=0,&&x\in\partial\Omega,\ t>0, \\
	\hfill u(x,0)&=u_0(x),&&x\in\Omega,
	\end{aligned}
	\right.
\ee
where $k\ge 0$ is a constant.
Observe that the coupled PDE's in \eqref{pbmKS} can be rewritten as the nonlocal, advection-diffusion scalar equation
$$\partial_t u=\nabla\cdot\bigl(\nabla u-(\nabla G)\ast u\bigr),$$
where $G$ is the Neumann Green kernel for $-\Delta+k$.
As a key difference with \eqref{pbmKS}, we note that the nonlocal feature in \eqref{pbmP}
 affects both the PDE and the boundary conditions,
and is caused by an action at the boundary (polymerization), rather than inside the domain.
In models such as \eqref{pbmKS}, 
cells randomly diffuse and are attracted by a chemical signal which is secreted by the cells themselves
($u$ and $v$ respectively denote the concentrations of cells and of chemical signal,
and the term $-kv$ represents the degradation of the chemical signal).
 In the recent work \cite{FLW}, the system \eqref{pbmKS} with the modified, 
mixed Neumann/Dirichlet boundary conditions $\partial_\nu u-u\partial_\nu v=u=0$,
 was studied in a different context, namely as a model for a single crawling keratocyte. The unknown $v$ 
 now stands for the mechanical stress in the cytoskeleton and 
 $u$ is the density of myosin motor proteins, which actively generate stress by binding to and pulling on the actin filaments constituting the cytoskeleton meshwork. As for the term $-kv$, it models the dissipation of stress via traction with the substrate to which
the actin gel is linked by adhesion molecules. 
The authors show that, in any space dimension $n\ge 2$, this change in boundary conditions is accompanied by a substantial, 
$k$-depending change in the potential of the model to support the emergence of singular structures
such as finite time concentration.

In the context of polarization and motility of eukaryotic cells on substrates,
 other convection-diffusion 2d models were designed in \cite{Berlyand2, Berlyand1,Berlyand_2021,LMVC,recho_1,recho_2,CMM2, JMB_EMV,PLOS}, but either they were mathematically studied in the 1d case in \cite{CMM1,CalvezMeunier,CHMV,Lepoutre_Meunier_Muller_JMPA,EMV,Lepoutre_Meunier_CMS}, or they were only partially studied in the 2d case \cite{Berlyand_2021,AlaMaMeu,CMM2} (traveling wave solutions). In a different context, the work \cite{DHMRW}
studies a related 1d nonlocal and nonlinear electroreaction-diffusion model.

Let us mention connections with some other 
problems. First note that, by the divergence theorem, the nonlinearity in  \eqref{pbmP}  can be rewritten as 
\[
\nabla\cdot\Bigl(c\ds\int_{\partial\Omega}f(c)\nu \dsigma\Bigr)=\nabla c\cdot \int_\Omega f'(c)\nabla c\dx.
\]
We thus see that the equation has a quadratic growth with respect to the gradient.
There is a very large literature on such problems, with so-called natural growth,
but with local nonlinearities (see,~e.g.,~\cite{BMP, BGa, BPo, Alaa, BSW}).
On the other hand, there is a large literature on semilinear parabolic equations with nonlocal nonlinearities,
but the structure of \eqref{pbmP}  is peculiar and quite different from that in previously studied problems.
In particular, being of conservative convective form, problem \eqref{pbmP}
enjoys the mass conservation property. 
Let us mention some examples which nevertheless share some features with~\eqref{pbmP}.
For nonlocal Neumann problems with zero order nonlinearities, of the form 
\be\label{pbmP1d}
\begin{cases}
\hfill u_t-\Delta u&=f_1(u)\ds\int_\Omega f_2(u)\dx ,\quad x\in\Omega,\ t>0, \\
\noalign{\vskip 1mm}
\hfill u_\nu&=g_1(u) \ds\int_\Omega g_2(u)\dx , \quad x\in\partial\Omega,\ t>0, \\
\end{cases}
\ee
results on blow-up and global existence can be found in, e.g.,~\cite{Pao, LWSL, Glad} and the references therein.
Equations with nonlocal gradient terms, of the form 
$$u_t-u^m\Delta u=u^p\int_\Omega |\nabla u|^2\dx $$
(and homogenous boundary conditions) have been studied in \cite{Dl,S02,KS, KLW, LW}.
The case $m=p=1$ arises in a model of replicator dynamics.
Regarding nonlocal problems with mass conservation, let us mention the equation 
$$u_t-\Delta u=u^p-|\Omega|^{-1}\int_\Omega u^p\dx $$
(with homogeneous Neumann conditions), 
which has been studied in \cite{JK, WTL, HY, BDS}.
As another nonlocal problem with mass control, the equation
$$u_t-\Delta u=\ds \frac{\lambda e^u}{\int_\Omega e^u \dx }$$
which arises,~i.a.,~in connection with the Keller-Segel system,
has also received a lot of attention (see \cite{KS07, Wo, KS}).

\subsection{Main results}
 
 \subsubsection{Local and global existence and convergence results}
 
 By standard theory \cite{LSU, Fr, Am, Lie},
 if $f\in C^1$, 
 $c_0$ is sufficiently smooth, say $c_0\in C^1(\overline\Omega)$,
  and $\Omega$ is smooth,
 then problem \eqref{pbmP} admits a unique maximal, classical solution. 
Our first concern is the solvability for low regularity initial data, 
 especially in Lebesgue spaces. This question is natural in view of the mass conservation property and,
 besides its own interest, this will turn out to be relevant for the study of the global behavior.
For (local) problems with so-called natural growth with respect to the gradient (see the previous subsection),
existence for positive $L^1$ data holds provided the nonlinear term has a ``good'' sign,
i.e.~acts as an absorption (see,~e.g.,~\cite{BMP, BGa, BPo, BSW}), whereas it fails when it acts as a source (see \cite{Alaa, BSW}).
Here, in addition to the nonlocal feature, the convection term does not have clear sign and the techniques in the above-mentioned papers 
cannot be used.
Our approach is different, relying on semigroup techniques originating from \cite{W80}
(see also \cite{Am2} and the references therein),
 combined with suitable trace theorems and fractional Sobolev spaces, and exploiting the conservative structure of the problem.

Before stating our main results, we note that, 
in the case when $\Omega$ is a cylindrical domain (cf.~\eqref{defCyl}),
 $C^{2,1}$ regularity up to the corner points
of the boundary is not expected (see related counter-examples in Remark~\ref{remcorners}).
 Therefore, some care is needed regarding the definition of classical solution, 
 which we make precise in the following:

  \begin{defi} \label{defsol}
  For given $\tau\in(0,\infty]$, we set
	\be\label{defEtau}
  E_\tau:=
  \begin{cases}
  C^{2,1}(\overline\Omega\times(0,\tau))&\hbox{if $\Omega$ is smooth,}\\
    \noalign{\vskip 1mm}
   C^{2,1}((\Omega\cup\Gamma)\times(0,\tau))\cap C^{1,0}(\overline\Omega\times(0,\tau))
    &\hbox{if $\Omega$ is a cylinder \eqref{defCyl}.}
   \end{cases}
\ee
 For $p\in[1,\infty)$ and $c_0\in L^p(\Omega)$, by a classical solution of \eqref{pbmP} on $[0,\tau)$,
 we mean a function $u\in E_\tau\cap C([0,\tau);L^p(\Omega))$ with $c(\cdot,0)=c_0$, which solves, in the pointwise sense,
  the PDE in $\Omega\times(0,\tau)$ and the boundary conditions
  on $\Gamma\times(0,\tau)$.
  \end{defi}

 We begin with the range $m\ge 1$. We note that the case $m=1$
corresponds to a quadratic growth of the nonlinear part of the equation.
It will turn out from the results below that this value plays a critical role in the problem.

 \goodbreak

\begin{theorem} \label{thm1}
	Let $1\le m\le p<\infty$ 
	and assume $f(c)=|c|^{m-1}c$ and $c_0\in L^p(\Omega)$.
\begin{itemize}[topsep=-1pt]\setlength\itemsep{-1pt}
\item[(i)] Problem \eqref{pbmP} admits a unique maximal classical solution
$$
	c\in  E_{T^*}
	\cap C([0,T^*);L^p(\Omega)).
$$
\item[(ii)]  If $c_0\ge 0$, $c_0\not\equiv 0$, then $c>0$ in $(\Omega\cup\Gamma)\times(0,T^*)$.
	\smallskip
\item[(iii)] If $p>m$ and $T^*<\infty$, then $\lim_{t\to T^*}\|c(t)\|_p=\infty$.
\end{itemize}
\end{theorem}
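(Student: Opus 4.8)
The plan is to treat \eqref{pbmP} as a nonlocal, conservative perturbation of the Neumann heat equation and to solve it by a Banach fixed point in an $L^p$-based space, in the spirit of the semigroup method of \cite{W80}. Two structural facts drive the argument. First, since $A(t)=\int_{\partial\Omega}f(c)\nu\,\dsigma$ is a vector depending only on $t$ (constant in $x$), the interior nonlinearity is $\nabla\cdot(cA)=A(t)\cdot\nabla c$, so the whole nonlinearity is carried by the finite-dimensional quantity $A$, which in turn depends on $c$ only through its boundary trace in $L^m(\partial\Omega)$. Second, the pair (equation, no-flux condition) has the form $\partial_t c=\Delta c-\nabla\cdot(cA)$ with $(\nabla c-cA)\cdot\nu=0$, which is exactly adapted to the Neumann heat semigroup $T(t)=e^{t\Delta_N}$: the boundary flux is absorbed by the no-flux condition, yielding the mild formulation
\[
c(t)=T(t)c_0-\int_0^t T(t-s)\,\nabla\cdot\bigl(c(s)A(s)\bigr)\,\dss ,
\]
where the conservative structure lets one transfer the derivative onto the kernel and exploit the smoothing bound $\|\nabla T(\tau)\|_{p\to p}\lesssim \tau^{-1/2}$.

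First I would fix a small $T>0$ and set up the contraction on $X_T=C([0,T];L^p(\Omega))$ endowed with an additional time-weighted norm $\sup_{0<t<T}t^{\theta}\|c(t)\|_{L^p(\partial\Omega)}$ controlling the boundary trace, so that $A(t)$ is well defined. The ingredients are: the smoothing estimates $\|T(t)g\|_{W^{s,p}}\lesssim t^{-s/2}\|g\|_p$ together with the trace embedding $W^{s,p}(\Omega)\hookrightarrow L^p(\partial\Omega)$ for $s>1/p$; the fact that, $\partial\Omega$ having finite measure, the hypothesis $m\le p$ gives $L^p(\partial\Omega)\hookrightarrow L^m(\partial\Omega)$, so controlling the trace in $L^p(\partial\Omega)$ suffices, with $|A|\lesssim\|c\|_{L^p(\partial\Omega)}^m$ by Hölder; and the Lipschitz bound $|A(c_1)-A(c_2)|\lesssim(\|c_1\|+\|c_2\|)_{L^p(\partial\Omega)}^{m-1}\,\|c_1-c_2\|_{L^p(\partial\Omega)}$. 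Bookkeeping of the time singularities (the integrable $(t-s)^{-1/2}$ from $\nabla T$ against the weight $s^{-\theta}$) produces self-mapping and contraction with a factor $T^{\delta}$, where $\delta=\delta(p,m)>0$ precisely when $p>m$; at the critical value $p=m$ one has $\delta=0$ and closes the argument instead via smallness of the high-frequency part of $c_0$, so that the existence time then depends on $c_0$ beyond its norm. Uniqueness comes from the same estimate. Once $c\in X_T$ is obtained, $A(\cdot)$ is continuous (indeed Hölder) in $t$, the problem becomes genuinely linear parabolic with a time-continuous, space-constant drift and a Robin-type condition, and bootstrapping with classical parabolic theory (\cite{LSU,Fr,Am,Lie}) upgrades $c$ to $C^{2,1}(\overline\Omega\times(0,T^*))$; $T^*$ is defined by standard continuation. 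I expect the main obstacle to be exactly this nonlinear/trace estimate: controlling the trace of $c(t)$ in $L^m(\partial\Omega)$ with a time singularity integrable against the $(t-s)^{-1/2}$ kernel and with the correct quantitative dependence on $\|c_0\|_p$, sharply down to $p=m$.

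For (ii), on the existence interval $A(t)$ is now a fixed, continuous, space-independent field, so $c$ solves the linear equation $\partial_t c=\Delta c-A(t)\cdot\nabla c$ with the no-flux condition $\partial_\nu c=(A\cdot\nu)\,c$ on $\Gamma$ and bounded coefficients. Since $c_0\ge 0$, $c_0\not\equiv 0$, the strong maximum principle gives $c>0$ in $\Omega\times(0,T^*)$, and Hopf's lemma, compatible with this oblique/Robin condition, propagates positivity up to the regular boundary, yielding $c>0$ on $(\Omega\cup\Gamma)\times(0,T^*)$.

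Finally, for (iii) I would argue by contradiction through a uniform local existence time. When $p>m$, the surplus power $T^{\delta}$, $\delta>0$, in the contraction of part (i) (the subcritical case) makes the guaranteed existence time depend on the datum only through its $L^p$ norm: there is a nonincreasing function $\rho\mapsto\tau(\rho)>0$ such that the solution issued from any $c_1\in L^p(\Omega)$ exists at least on $[0,\tau(\|c_1\|_p)]$. If $T^*<\infty$ and $\|c(t)\|_p$ did not tend to $\infty$, there would be $t_k\to T^*$ with $\|c(t_k)\|_p$ bounded; restarting at $t_k$ with datum $c(t_k)$ would, by the uniform lower bound on $\tau$, continue the solution beyond $T^*$, contradicting maximality. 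Hence $\lim_{t\to T^*}\|c(t)\|_p=\infty$. Note that this step genuinely uses the strictness $p>m$: at the critical value $p=m$ the existence time is not controlled by the norm alone, which is why the blow-up alternative is stated in the $L^p$ norm only for $p>m$.
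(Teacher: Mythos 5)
Your toolkit is essentially the paper's: the mild formulation through the Neumann semigroup with the derivative transferred onto the kernel (the operator $K_\nabla$ of Lemma~\ref{lem:gaussestim}), a time-weighted norm controlling the boundary trace via the embedding $W^{1/p,p}(\Omega)\subset L^p(\partial\Omega)$, H\"older/Lipschitz bounds on $A$ through the trace, the subcritical/critical dichotomy with a ``small high-frequency part'' argument at $p=m$, maximum principle plus Hopf for positivity, and a uniform local existence time for the continuation criterion. The organization differs: the paper does not run a contraction on the mild formulation, but approximates $c_0$ by smooth data, solves classically (Amann), proves uniform bounds on $M_{i,T}=\sup_t\|c_i(t)\|_p$ and $N_{i,T}=\sup_t t^{1/2p}\|c_i(t)\|_{1/p,p}$ by a continuity argument, and passes to the limit using uniform Schauder estimates (Lemma~\ref{lem:Holdersmoothin}). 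Your fixed-point route is viable for existence; the paper's route makes classical regularity and positivity of the limit come essentially for free from the approximating solutions.

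The genuine gap is uniqueness. Part (i) asserts uniqueness in the class $C^{2,1}(\overline\Omega\times(0,T^*))\cap C([0,T^*);L^p(\Omega))$, with no weighted-norm condition. Your contraction yields uniqueness only among mild solutions lying in your space $X_T$ (finite, indeed small, weighted trace norm). A competing classical solution that is merely continuous into $L^p$ at $t=0$ need not a priori satisfy $\sup_t t^{\theta}\|c(t)\|_{L^p(\partial\Omega)}<\infty$, precisely because the possible pathology sits at $t=0$, where one only has $L^p$ continuity. Removing this conditionality is where the paper spends real effort (Proposition~\ref{PropUniqLp} and Lemma~\ref{lem:stat_sol}): one shifts time by $\tau>0$, notes that the shifted solution has vanishing weighted norm (non-uniformly in $\tau$), identifies it with the constructed solution emanating from $c(\tau)$ by the conditional uniqueness, and then lets $\tau\to 0$; crucially, this requires an existence time that is uniform over the compact set $\{c(t):t\in[0,T]\}\subset L^p$, i.e.\ property \eqref{compactexist} and the Brezis--Cazenave argument of \cite{Br,BC96}. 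Your sentence ``uniqueness comes from the same estimate'' skips exactly this step, and it is not a formality: in critical-type settings conditional and unconditional uniqueness are genuinely different statements.

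Two further points. For (ii), the strong maximum principle and Hopf lemma give \emph{strict} positivity only once $c\ge 0$ is known; with $c_0$ merely in $L^p$, attained only in the $L^p$ sense, nonnegativity itself requires an argument. The paper obtains it by a Stampacchia estimate for solutions continuous up to $t=0$ (Lemma~\ref{lempos1}) applied to the smooth approximating solutions, then passing to the limit; your construction does not automatically provide such approximations, so you would need to add, e.g., continuous dependence plus approximation of $c_0$ by smooth nonnegative data. Finally, your bookkeeping breaks down at the admissible endpoint $p=m=1$: the trace requirement $s>1/p$ would force $s>1$, and the Duhamel singularity $(t-s)^{-(1+s)/2}$ becomes nonintegrable. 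The paper circumvents this with the endpoint trace $W^{1,1}(\Omega)\subset L^1(\partial\Omega)$ and the interpolation $\|c\|_{1/2,1}\le\|c\|_1^{1/2}\|c\|_{1,1}^{1/2}$, which yields the integrable exponent $(t-s)^{-3/4}$ (see the case $\theta=1/2$ in \eqref{estimMiTpart2}).
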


\begin{remark} 
Although it remains an open problem whether local existence may fail when $1\le p<m$, 
our results exhibit the critical role played by the $L^m$ norm.
Indeed, the continuation property (iii) fails for $1\le p<m$, since the $L^p$ norms may remain bounded as $t\to T^*<\infty$,
as a consequence of Theorem~\ref{BUprofileaisym}(ii) below. 
It also fails for $p=m=1$, in view of the mass conservation property 
(cf.~\eqref{Conserv} below). However, interestingly, for $m=1$ and positive solutions, 
the entropy blows up whenever $T^*<\infty$, 
namely
\[
\lim_{t\to T^*} \int_\Omega (c\log c)(t)\dx =\infty
\]
(this follows from the proof of Theorem~\ref{thm2sharp} below).

The condition $p \ge m$ in Theorem~\ref{thm1} is actually also natural in view of the scaling properties of the problem 
(see subsection~\ref{SubsecScaling} and cf.,~e.g.,~\cite[pp.158-159]{QSb} and
\cite{CW98} for similar situations in other evolution PDE's,
such as the Fujita, the nonlinear Schr\"odinger and the Navier-Stokes equations).

It is also worth noting
from Theorem~\ref{thm1} that the $L^1$ scaling critical case belongs to the local existence range,
unlike for the Fujita equation (see \cite{BC96, CZ}).
\end{remark}

We next consider the lower range $m<1$ for the growth of $f$ at infinity. This corresponds to a subquadratic growth
	of the nonlinear part of the equation. It turns out that, in this case,
local {\it and global} existence holds for any nonnegative $L^1$ initial data. Moreover, exponential stabilization to a constant 
 steady state also occurs 
for small mass.

\goodbreak
Let $\bar c_0=\frac{1}{|\Omega|}\int_\Omega c_0\dx$ denote the average of $c_0$.
\begin{theorem} \label{thm2}
	Let $f\in  C^1([0,\infty))$ satisfy 
		\be\label{hypfm}
		0\le f(s)\le Cs^m,  \ \ s\ge 0, \quad\hbox{ for some $m\in(0,1)$}
		\ee
		 with some $C>0$, and also assume 
	that $f$ is globally Lipschitz continuous.
	Let $c_0\in L^1(\Omega)$, with $c_0\ge 0$.
			\smallskip
			
\begin{itemize}[topsep=-1pt]\setlength\itemsep{-1pt}
\item[(i)] Problem \eqref{pbmP} admits a unique 
global, nonnegative classical solution
	\be\label{regulc}
	c\in  E_\infty
	\cap C([0,\infty);L^1(\Omega)).
	\ee
	Moreover,  we have
		\be\label{boundeps}
	\ds\sup_{t\ge\eps}\|c(t)\|_\infty<\infty,\quad\hbox{for each $\eps>0$.}
		\ee
	Also, if $c_0\not\equiv 0$, then $c>0$ in $(\Omega\cup\Gamma)\times(0,\infty)$.
		\medskip
		
\item[(ii)] There exist $\eta_0,\lambda>0$ such that if $\|c_0\|_1\le \eta_0$, then 
	\be\label{stabilc}
	\|c(t)-\bar c_0\|_\infty\le Ce^{-\lambda t},\quad t\ge 1 
	\ee
	for some constant $C>0$. 
\end{itemize}
\end{theorem}

We shall next give conditions which determine the global existence or nonexistence of solutions for $m\ge 1$.
This will complete the picture in terms of the criticality of the value $m=1$.
We begin with the case $m=1$.

\begin{theorem} \label{thm2sharp} 	
Let $f(c)=c$, $c_0\in L^1(\Omega)$, $c_0\ge 0$ and set $M=\|c_0\|_1$.

\smallskip

\begin{itemize}[topsep=-1pt]\setlength\itemsep{-1pt}
\item[(i)] If $M\le 1$, then  $T^*=\infty$ and $c$ satisfies \eqref{boundeps}.

	\smallskip
	
\item[(ii)] Morever, if $M<1$, then 
\be\label{stabilc1}
	\lim_{t\to \infty} \|c(t)-\bar c_0\|_\infty=0,
\ee
\end{itemize}
	\end{theorem}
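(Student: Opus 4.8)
The plan is to use the \emph{entropy} functional $E(t):=\int_\Omega c\log c\,\dx$ as a Liapunov function and to read off from its dissipation a sharp criterion governed by the mass $M$. Since $f(c)=c$, the convective field depends only on $t$, and by the divergence theorem $A(t)=\int_{\partial\Omega}c\,\nu\,\dsigma=\int_\Omega\nabla c\,\dx$, so the equation reads $\partial_t c=\Delta c-A(t)\cdot\nabla c$ with no-flux condition $(\nabla c-cA)\cdot\nu=0$ on $\partial\Omega$. For the positive classical solution furnished by Theorem~\ref{thm1} (applied with $p=m=1$), differentiating $E$, integrating by parts, and using that the boundary flux vanishes yields the identity
\[
\frac{d}{dt}E(t)=-\int_\Omega\frac{|\nabla c|^2}{c}\,\dx+|A(t)|^2 .
\]
The algebraic heart of the matter is the bound $|A|^2\le M\int_\Omega\frac{|\nabla c|^2}{c}\,\dx$, which follows by writing $A=2\int_\Omega\sqrt c\,\nabla\sqrt c\,\dx$, applying Cauchy--Schwarz, using mass conservation $\int_\Omega c\,\dx=M$, and noting $\int_\Omega\frac{|\nabla c|^2}{c}\,\dx=4\int_\Omega|\nabla\sqrt c|^2\,\dx$. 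Consequently
\[
\frac{d}{dt}E(t)\le-(1-M)\int_\Omega\frac{|\nabla c|^2}{c}\,\dx\le0\qquad\text{whenever }M\le1 .
\]
Thus $E$ is non-increasing; since $s\log s\ge-1/e$ gives $E\ge-|\Omega|/e$, $E$ stays bounded on $[t_0,T^*)$ for any $t_0>0$ (here $E(t_0)<\infty$ because $c(t_0)\in C(\overline\Omega)$ is bounded by the smoothing of the local theory, even for merely $L^1$ data).

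For part~(i) I would then close global existence by an $\varepsilon$-regularity, gain-of-integrability argument: prove that a bound on $E$ forces a bound on $\|c(t)\|_p$ for some $p>1$, which by the continuation criterion Theorem~\ref{thm1}(iii) excludes blow-up and gives $T^*=\infty$. This is exactly the contrapositive of the assertion---announced in the remark following Theorem~\ref{thm1}---that $T^*<\infty$ forces $E(t)\to\infty$. Concretely, I would run the $L^p$ energy estimate in terms of $v=c^{p/2}$,
\[
\frac{1}{p}\frac{d}{dt}\|c\|_p^p=-\frac{4(p-1)}{p^2}\|\nabla v\|_2^2+\frac{p-1}{p}\,A\cdot\int_{\partial\Omega}c^p\nu\,\dsigma ,
\]
and control the nonlocal boundary term via $|A|\le\|c\|_{L^1(\partial\Omega)}$ together with trace and Gagliardo--Nirenberg inequalities, using the $L\log L$ information encoded in $E$ to render the resulting coupling absorbable into the good dissipation $\|\nabla v\|_2^2$ for $p>1$ close to $1$. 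Once $\|c(t)\|_p$ is under control, the uniform boundedness for $t$ bounded away from $0$ follows by viewing the equation as the linear problem $\partial_t c=\Delta c-A(t)\cdot\nabla c$ with bounded coefficient $A(t)$ and applying parabolic $L^p$--$L^\infty$ smoothing.

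For part~(ii), the strict inequality $M<1$ upgrades monotonicity to the quantitative dissipation estimate $\int_{t_0}^\infty\int_\Omega\frac{|\nabla c|^2}{c}\,\dx\,\dt\le(1-M)^{-1}\bigl(E(t_0)+|\Omega|/e\bigr)<\infty$, where global existence is guaranteed by part~(i). Hence $\int_\Omega|\nabla\sqrt{c(t)}|^2\,\dx\to0$ along some sequence $t_n\to\infty$, and the Poincar\'e--Wirtinger inequality shows that $\sqrt{c(t_n)}$, and therefore $c(t_n)$, converges to a constant, necessarily $\bar c_0$ by mass conservation. To promote this to convergence of the full trajectory I would invoke LaSalle's invariance principle: $E$ is a Liapunov function whose dissipation vanishes only on constants, so the $\omega$-limit set (precompact by the boundedness of part~(i) and parabolic estimates) reduces to the single steady state $\bar c_0$. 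Parabolic smoothing then turns the resulting $L^1$ convergence into the uniform convergence \eqref{stabilc1}.

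I expect the main obstacle to be the $\varepsilon$-regularity step in part~(i): controlling the nonlocal, boundary-supported convection term $A$ through the entropy alone, and closing the estimate \emph{at the critical mass $M=1$}, where $E$ is only non-increasing and the dissipation integral is no longer finite, so that a borderline functional inequality of trace/Gagliardo--Nirenberg type, sharp in its constant, is required to absorb the destabilizing term.
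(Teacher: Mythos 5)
Your entropy computation is exactly the paper's starting point: the identity $\phi'=-\int_\Omega c^{-1}|\nabla c|^2\dx+|A|^2$ together with the Cauchy--Schwarz bound $|A|^2\le M\int_\Omega c^{-1}|\nabla c|^2\dx$ (writing $A=2\int_\Omega\sqrt c\,\nabla\sqrt c\,\dx$) is precisely \eqref{entropydissip}, and your part~(ii) (finite dissipation integral for $M<1$, convergence along a sequence, compactness/LaSalle to upgrade to the full trajectory) is also how the paper concludes, via the argument of Theorem~\ref{thm1aneg}. The genuine gap is in the decisive step of part~(i): passing from a bounded entropy to a bound on $\|c(t)\|_p$ for some $p>1$ uniformly up to $T^*$. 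You propose to do this by the $L^p$ energy identity, absorbing the nonlocal term into the dissipation $\|\nabla c^{p/2}\|_2^2$ via trace/Gagliardo--Nirenberg inequalities ``using the $L\log L$ information''. As stated this cannot close at $M=1$. The paper's own Proposition~\ref{PropAprioriLp} shows what the absorption route yields: the bad term is dominated by $K_0\|c\|_p^m\int|\nabla c^{p/2}|^2\dx$, so absorption requires smallness of $\|c\|_p$ itself --- which is exactly what you are trying to prove, so the argument is circular. The alternative of estimating $|A|\le C(M^{1/2}\|\nabla\sqrt c\|_2+M)$ and using Young plus Gronwall requires $\int_{t_0}^{T^*}\int_\Omega c^{-1}|\nabla c|^2\dx\,\dt<\infty$; but at the critical mass $M=1$ the entropy inequality degenerates to $\phi'\le 0$ with coefficient $(1-M)=0$ in front of the dissipation, so this integrability is not available. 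You candidly flag this obstacle in your last paragraph, but flagging it does not resolve it, and part~(i) at $M=1$ (the most delicate case, which makes the threshold sharp) is precisely what remains unproved.

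What the paper does instead is qualitatively different and avoids any absorption into the $L^p$ dissipation. It proves an $\eps$-regularity statement at the level of the Duhamel representation (Proposition~\ref{lem:decomp}): there exist $\eps_0,C_0>0$ such that any classical solution whose initial datum splits as $c_0=c_{0,1}+c_{0,2}$ with $\|c_{0,1}\|_1\le\eps_0$ and $\|c_{0,2}\|_\infty\le R$ satisfies $\|c(t)\|_p\le \bar C_0 t^{-1/2}$ ($p=n/(n-1)$) on a time window of the \emph{uniform} length $\eps_0R^{-2}$. The proof works in the decomposition norm $\mathcal{N}_\tau$, propagating the splitting through the representation formula \eqref{NewRepres} with the smoothing estimates of Lemma~\ref{lem:gaussestim}, plus a continuity argument (Lemma~\ref{lem:decomp0}). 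The entropy bound then supplies exactly the required splitting at every time $t_0$: with $c_1=c\chi_{\{c>R\}}$, $c_2=c\chi_{\{c\le R\}}$, one has $\|c_1(t_0)\|_1\le K/\log R\le\eps_0$ for $R$ large, while $\|c_2(t_0)\|_\infty\le R$. Applying the proposition with $t_0$ as the time origin gives a uniform $L^p$ bound, $p>1$, on $[T_1,T^*)$, which contradicts the continuation criterion of Theorem~\ref{thm1}(iii) if $T^*<\infty$. Note that this only uses monotone boundedness of the entropy --- no dissipation integrability --- which is why it survives at $M=1$. To repair your proof you would need to either reproduce this semigroup/decomposition mechanism or find a functional inequality closing the energy estimate at critical mass; the latter is not provided and there is no evidence it exists.
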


	It follows from Theorems~\ref{thm2sharp} and \ref{thmBU} 
	below that for $f(c)=c$, problem \eqref{pbmP} exhibits a critical mass phenomenon, with sharp  threshold $M=1$.
Namely, all initial data with mass $M\le 1$ yield global bounded solutions whereas,
for any mass $M>1$, finite time blow-up occurs for a large class of initial data in suitable domains.
This is reminiscent of the well-known situation for the $2d$ Keller-Segel system.
A main difference is however that the critical mass phenomenon is dimension-independent in case of problem \eqref{pbmP}.
Also, solutions with critical mass remain bounded,
unlike in the critical mass case for the $2d$ Keller-Segel system.

The proof of Theorem~\ref{thm2sharp} is delicate. Starting from the observation that the entropy stays bounded when $M\le 1$, it is based on the possiblity to control $L^p$ norms of the solution by means of the entropy,
	refining on semigroup arguments and smoothing effects from the proof of Theorem~\ref{thm1}. 
	In this process, an important step is a kind of $\eps$-regularity property. Namely (see Proposition~\ref{lem:decomp}), we establish a uniform lower bound on the classical existence time of the solution when the initial data is decomposed as the sum of a
	suitably small $L^1$ part and of a bounded part.

Our next theorem partially extends  Theorem~\ref{thm2sharp} to all $m\ge 1$, with an additional exponential stabilization property,
the mass being now replaced with the $L^m$ norm.
However we do not have an explicit sharp smallness condition in general.

\begin{theorem} \label{thm2b} 	
Let $m\ge 1$ and $f(c)=c^m$.
There exists $\eta_0>0$ such that, if $c_0\in L^m(\Omega)$, $c_0\ge 0$ satisfies $\|c_0\|_m\le \eta_0$, then  $T^*=\infty$.
Moreover, $c$ satisfies the exponential stabilization property \eqref{stabilc}.
\end{theorem}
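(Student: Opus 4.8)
The plan is to establish Theorem~\ref{thm2b} as a small-data global existence and stabilization result, built on the local theory of Theorem~\ref{thm1} and exploiting that the constant state $\bar c_0$, preserved by mass conservation, is a steady state at which the convective field vanishes. First I would pass to the deviation $w=c-\bar c_0$. Since mass is conserved, $\int_\Omega w(t)\,\dx=0$ for all $t$, and since $\int_{\partial\Omega}\nu\,\dsigma=0$, the field may be written
$$A(t)=\int_{\partial\Omega}\big(c^m-\bar c_0^{\,m}\big)\nu\,\dsigma .$$
As $A$ is a spatial constant, the nonlinear term of the $w$-equation is $\nabla\cdot(cA)=A\cdot\nabla w$; because $A=O(w)$ (its leading part is $m\bar c_0^{\,m-1}\int_{\partial\Omega}w\,\nu\,\dsigma$) while the remaining factor $\nabla w$ is again $O(w)$, the whole term is \emph{quadratic} in $w$. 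Hence the linearization at $w=0$ is exactly the Neumann heat flow $\partial_t w=\Delta w$, whose restriction to mean-zero functions enjoys a spectral gap, $\|e^{t\Delta}w\|_p\le Ce^{-\lambda_1 t}\|w\|_p$ for $\int_\Omega w\,\dx=0$, with $\lambda_1>0$ the first nonzero Neumann eigenvalue. This structural observation -- a quadratic nonlinearity over an exponentially contracting linear semigroup -- is the backbone of both the global existence and the decay.

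Next I would use the mild (Duhamel) formulation
$$c(t)=e^{t\Delta}c_0-\int_0^t e^{(t-s)\Delta}\,\nabla\!\cdot\!\big(c(s)\,A(s)\big)\,\dss,$$
and run a fixed-point/continuation argument in a scaling-critical space adapted to the critical exponent $L^m$ identified in Theorem~\ref{thm1}. Concretely I would control $\|c(t)\|_m$ together with a weighted higher norm $t^{\alpha}\|c(t)\|_q$ (with $q>m$ and $\alpha=\frac n2(\frac1m-\frac1q)$), using the smoothing bounds $\|\nabla e^{t\Delta}\phi\|_q\le Ct^{-1/2}\|\phi\|_q$ and $\|e^{t\Delta}\phi\|_q\le Ct^{-\frac n2(\frac1p-\frac1q)}\|\phi\|_p$. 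The size of the nonlocal field is governed by the trace bound $|A(s)|\le\|c^m-\bar c_0^{\,m}\|_{L^1(\partial\Omega)}$, which I would dominate by an interior fractional-Sobolev norm of $w$ via a trace inequality. The critical choice of exponents makes every time integral in the Duhamel term scale-invariant, hence convergent, and the smallness $\|c_0\|_m\le\eta_0$ furnishes a contraction. This yields a global bounded solution; via the continuation criterion Theorem~\ref{thm1}(iii) applied with any $p>m$, the boundedness of $\|c(t)\|_p$ on finite intervals gives $T^*=\infty$.

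For the exponential stabilization \eqref{stabilc} I would upgrade this estimate using the spectral gap. Splitting $e^{(t-s)\Delta}$ through the semigroup property and projecting onto mean-zero functions, I insert the factor $e^{-\lambda_1(t-s)}$ in the Duhamel estimate; on the long-time part $t-s\gtrsim 1$ this exponential absorbs the smoothing singularities, while the short-time part $t-s\lesssim 1$ is handled by the integrable powers alone. Combined with the quadratic smallness of $A(s)$, a Gronwall/bootstrap argument then gives $\|w(t)\|_m\lesssim e^{-\lambda t}$, and a final smoothing step $L^m\to L^\infty$ upgrades this to the claimed bound $\|c(t)-\bar c_0\|_\infty\le Ce^{-\lambda t}$, matching \eqref{stabilc}.

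I expect the main obstacle to lie in the criticality of the $L^m$ norm: since the governing estimates are scale-invariant there is no slack, so all constants must be tracked and shown to be controlled by $\eta_0$, and the boundary/nonlocal term $A(t)$ must be estimated by a trace inequality whose fractional-Sobolev exponents are simultaneously compatible with the $L^p$--$L^q$ smoothing rates and with the spectral gap. Reconciling the short-time smoothing, which produces integrable but non-decaying singularities in $t-s$, with the long-time exponential contraction inside a single norm is the principal technical point, handled by splitting the time integral at $t-s\sim 1$ as above.
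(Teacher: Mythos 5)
Your strategy is viable and would prove the theorem, but it is genuinely different from the paper's proof. The paper is not perturbative around $\bar c_0$: global existence there comes from combining the critical small-data local theory (Proposition~\ref{thm2prop}(ii), existence on a unit time interval together with the smoothing bound \eqref{regulc2b}) with the $L^p$ Liapunov functional of Proposition~\ref{PropAprioriLp}, which shows that $t\mapsto\|c(t)\|_m$ is \emph{nonincreasing} when $\|c_0\|_m$ is small (mass conservation replaces this when $m=1$); this monotonicity allows one to restart the unit-time local result from every later time, and Theorem~\ref{thm1}(iii), applied in $L^q$ with $q=nm/(n-1)>m$, then gives $T^*=\infty$. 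Convergence to $\bar c_0$ is obtained \emph{qualitatively} by a dynamical-systems argument: precompactness of the time translates $c(\cdot,\cdot+j)$ in $C_{loc}$ (from the Schauder bounds \eqref{regulC1}) plus vanishing of the dissipation integral force every cluster point to be a constant, identified as $\bar c_0$ by conservation of mass. Only at the very last step does the spectral gap enter, and in energy rather than semigroup form: Lemma~\ref{cvexp} runs an $L^2$ estimate on $w=c-\bar c_0$ using Poincar\'e--Wirtinger and trace inequalities, with the smallness of $|\bar c_0|$ and of $A(t)$ for large $t$, to convert convergence into the exponential rate. Your scheme (exponentially weighted critical Duhamel estimates around the constant state) gets global existence and the rate in one stroke and is more self-contained; the paper's scheme buys the monotonicity of the critical norm, a structural fact reused elsewhere (e.g.~the entropy argument behind Theorem~\ref{thm2sharp}), and never needs any uniform-in-time linear estimates.

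Three details in your plan require genuine work, though none is fatal. (1) The paper's Lemma~\ref{lem:gaussestim} only provides smoothing constants $C(T)$ that grow with $T$; your global-in-time contraction needs uniform estimates with exponential decay, e.g.\ $\|K_\nabla(\tau)\psi\|_m\le C\min(\tau,1)^{-1/2}e^{-\lambda_1\tau}\|\psi\|_m$. These do hold --- $K_\nabla(\tau)\psi$ has zero mean because $\int_\Omega G(x,y,\tau)\,\dx=1$, so one can write $K_\nabla(\tau)=S(\tau-1)K_\nabla(1)$ for $\tau\ge 1$ and invoke the gap on mean-zero functions --- but you must prove them; they are not in the paper. (2) As written, your norm $\sup_t t^{\alpha}\|c(t)\|_q$ is infinite already for the free term, since $S(t)c_0\to\bar c_0\neq 0$ in $L^q$ and hence $t^\alpha\|S(t)c_0\|_q\to\infty$; the weight must be $\min(t,1)^{\alpha}$ (or the weighted estimates must be run on $w$, not on $c$). (3) The linearization at $\bar c_0$ is \emph{not} exactly the Neumann heat flow: the boundary condition $(\nabla c-cA)\cdot\nu=0$ contributes the nonlocal \emph{linear} term $m\bar c_0^{\,m}\bigl(\int_{\partial\Omega}w\,\nu\,d\sigma\bigr)\cdot\nu$, i.e.\ the piece $\bar c_0A(s)$ of $A(s)c(s)$ in the Duhamel formula, which is $O(w)$ rather than $O(w^2)$. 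It is harmless only because its coefficient is $O(\bar c_0^{\,m})=O(\eta_0^m)$ (recall $|\bar c_0|\le|\Omega|^{-1/m}\|c_0\|_m$), so it can be absorbed perturbatively --- this is precisely the role of the hypothesis $|\bar c_0|\le\eta_2$ in Lemma~\ref{cvexp} --- but it must be accounted for in your fixed point rather than dismissed.
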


\begin{remark}  
\begin{itemize}
\item[(i)]
The small data assumptions for convergence to the constant steady-state
 in Theorem~\ref{thm2}(ii) and Theorem~\ref{thm2b} are necessary.
 Indeed, considering problem \eqref{pbmP} with $n=1$ and $\Omega=(0,1)$, this a consequence of the following results (see~\cite{MeuSou2}),
 where $\mathcal{S}$ denotes the set of nonconstant steady-states and
$\mathcal{S}_M=\mathcal{S}\cap \{\|c\|_1=M\}$.

 	\smallskip
	
$\bullet$ Let $f(s)=c^m$ with $m\ge 1$.  
Then $\mathcal{S}\ne\emptyset$ and any $c\in\mathcal{S}$ satisfies $\|c\|_m=N_0:=m^{-1/m}$.
Moreover,  for $m>1$, we have $\mathcal{S}_M\ne\emptyset$ if and only if $M\in(0,N_0)$.
	\smallskip
	
$\bullet$ Let $f$ be such that $f(s)=s^m$ for $s$ large and some $m\in(0,1)$,
with $f\in C^1([0,\infty))$, $f$~concave and $f'$ bounded.
Then there exists $M_0>0$ such that $\mathcal{S}_M=\emptyset$ for $M\in(0,M_0)$,
and $\#\mathcal{S}_M\ge 2$ for $M>M_0$.
	\smallskip
	
\item[(ii)]The global Lipschitz assumption on $f$ in Theorem~\ref{thm2} (which, in view of \eqref{hypfm},
does not significantly restrict the behavior of $f$ at infinity),
is only used to prove uniqueness in $L^1$.
Actually, by suitable modifications of the proof of Theorem~\ref{thm2}, one can show that,
 except for the uniqueness statement, the result remains valid
for $f(c)=c^m$ with $m\in(0,1)$.
	\smallskip
	
\item[(iii)] A result related to Theorem~\ref{thm2sharp} was obtained in~\cite{CHMV}
for the analogous problem on the half-line $I=(0,\infty)$,
i.e.~$c_t=(c_x-c(0,t)c)_x$ with zero flux condition at $x=0$.
Namely, under the assumptions $0<c_0\in L^1(I,(1+x)dx)$, $c_0\log c_0\in L^1(I)$ and $M\le 1$, 
the existence of a global, suitable weak solution was proved (the classical regularity of the solution was not established).
Note that, although entropy is also used in \cite{CHMV}, our proof is quite different from the proof
in \cite{CHMV}, which does not use semigroup techniques nor smoothing effects and depends to some rather large extent on the one-dimensional nature of the problem. 
\end{itemize}
\end{remark}

\subsubsection{Blow-up}

We now provide sharp conditions under which blow-up occurs. 
To this end we specialize to the case of the domains
\be\label{hypcyl1}
\hbox{$\Omega=(0,L)\times B'_R\subset \R^n$ if $n\ge 2$, \ or $\Omega=(0,L)$ if $n=1$,}
\ee
with $B'_R=B_R(0)\subset \R^{n-1}$ and $L, R>0$.
As for the initial data we shall assume
\be\label{hypcyl2}
\hbox{$c_0\in C^1(\overline\Omega)$, $c_0\ge 0$ is axisymmetric with respect to $e_1$ (if $n\ge 2$),}
\ee
\be\label{hypcyl3}
\hbox{$\partial_{x_1}c_0\le 0$ and $\partial_{x_1}c_0\not\equiv 0$.}
\ee

\begin{theorem}\label{thmBU}
Assume $m\ge 1$,  $f(c)=c^m$ and \eqref{hypcyl1}-\eqref{hypcyl3}.
\begin{itemize}[topsep=-1pt]\setlength\itemsep{-1pt}
\item[(i)] 
Let $m=1$. If $M>1$, then the solution of \eqref{pbmP} 
blows up in finite time.
	\smallskip
	
\item[(ii)] 
Let $m>1$ and $M>0$. 
Assume that
\end{itemize}
\be\label{condBUc0}
\int_\Omega  x_1c_0\dx\le K:=
\begin{cases}
C_1M\min\bigl\{L,LR^{1-n}(M/M_0),R^{1-n}M^{m/(m-1)}\bigr\},
&\hbox{if $n\ge 2$} \\
\noalign{\vskip 1mm}
C_1M\min\bigl\{L,M^{m/(m-1)}\bigr\},
&\hbox{if $n=1$}
\end{cases}
\ee
\begin{itemize}[topsep=-1pt]\setlength\itemsep{-1pt}
\item[] where $C_1=C_1(n)>0$ and $M_0=\bigl\|\int_0^L c_0(x_1,\cdot)dx_1\bigr\|_{L^\infty(B'_R)}$.
Then the solution of \eqref{pbmP} 
blows up in finite time.
\end{itemize}
\end{theorem}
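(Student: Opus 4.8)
The plan is to prove blow-up by a first-moment (virial) argument based on the quantity $I(t):=\int_\Omega x_1 c\,dx$, showing that under the hypotheses $I$ is forced out of its admissible range $[0,LM/2]$ within a finite time, which is incompatible with the existence of a global positive classical solution. The starting point is the moment identity: multiplying the equation by $x_1$, integrating over $\Omega$ and using the no-flux condition (the boundary term vanishes on $\Gamma$, the cylinder edges being negligible), one obtains
\[
I'(t)=\int_\Omega c\,(A\cdot e_1)\,dx-\int_\Omega \partial_{x_1}c\,dx=(\phi_0-\phi_L)-M(\psi_0-\psi_L),
\]
where $\phi_0,\phi_L$ (resp. $\psi_0,\psi_L$) denote the integrals of $c$ (resp. $c^m$) over the end faces $\{x_1=0\}$ and $\{x_1=L\}$, and where $A\cdot e_1=\psi_L-\psi_0$ because by axisymmetry $A$ is parallel to $e_1$ while $\nu\cdot e_1$ vanishes on the lateral boundary. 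Two flow-invariant structures will be essential: first, the monotonicity $\partial_{x_1}c(\cdot,t)\le 0$, equivalently that the slice mass $g(s,t):=\int_{B'_R}c(s,\cdot)\,dx'$ is nonincreasing in $s$ and $G(s,t):=\int_0^s g\,d\sigma$ is concave; second, for $m>1$, the $x_1$-averaged density $\int_0^L c\,dx_1$ solves the heat equation on $B'_R$ with Neumann condition and hence stays bounded by $M_0$ in $L^\infty$.

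For the critical case $m=1$ one has $\psi=\phi$, so the identity collapses to $I'=-(M-1)\,a$ with $a:=\phi_0-\phi_L=G_s(0)-G_s(L)\ge0$. The key is a coercivity estimate coming purely from concavity of $G$: writing $P(s)=G(s)-Ms/L\ge0$ (concave, vanishing at both ends), the endpoint inequalities $P(s)\le P_s(0)\,s$ and $P(s)\le -P_s(L)(L-s)$ give $\max P\le \tfrac{L}{4}a$ by balancing at the maximizer, and therefore
\[
a\ \ge\ \frac{4}{L}\max_{[0,L]}P\ \ge\ \frac{4}{L^2}\int_0^L P\,d\sigma\ =\ \frac{4}{L^2}\Bigl(\tfrac{LM}{2}-I\Bigr).
\]
Setting $D:=\tfrac{LM}{2}-I\ge0$, with $D(0)>0$ since $\partial_{x_1}c_0\not\equiv0$, this yields $D'=(M-1)a\ge \tfrac{4(M-1)}{L^2}D$, so $D$ grows at least exponentially; since $I\ge0$ forces $D\le LM/2$, this is impossible beyond a finite time, and the solution must blow up. Note that this argument is threshold-free and sharp, matching the critical mass $M=1$.

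For $m>1$ the mechanism is instead the superlinearity of $f$ at the concentrating face. Assuming $I(t)$ small, Chebyshev's inequality puts at least half the mass in $\{x_1\le 2I/M\}$, and since $g$ is nonincreasing this mass is $\le (2I/M)\phi_0$, giving the concentration bound $\phi_0\ge M^2/(4I)$. Jensen's inequality then yields $\psi_0\ge |B'_R|^{1-m}\phi_0^m$, while monotonicity together with the preserved bound $M_0$ gives $c(L,\cdot)\le M_0/L$ pointwise and hence $\psi_L\le (M/L)(M_0/L)^{m-1}$. Feeding these into the moment identity, and using that its right-hand side is eventually decreasing as a function of $\phi_0$, I would obtain in the small-$I$ regime a closed inequality of the form $I'\le -\kappa\,I^{-m}$ with $\kappa=\kappa(n,m,M,R)>0$; equivalently $(I^{m+1})'\le-(m+1)\kappa$, so $I$ reaches $0$ in finite time, contradicting $I>0$ for a positive classical solution. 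The threshold below which this runaway regime is entered, obtained by balancing the three competing terms ($\phi_0$, the superlinear gain $-M\psi_0$ and the loss $M\psi_L$), is precisely the quantity $K$ of \eqref{condBUc0}; since $I$ is then decreasing, the hypothesis $I(0)\le K$ suffices to remain in the regime thereafter.

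The \textbf{main obstacle} is the rigorous preservation of monotonicity $\partial_{x_1}c\le0$, on which both cases rest. It should follow from the maximum principle applied to $v=\partial_{x_1}c$, which solves a linear parabolic equation with no zero-order term; the difficulty is that the effective boundary conditions for $v$ at the end faces are coupled to the solution through the sign of $A\cdot e_1$, so the argument must be closed by a continuity/bootstrap in time (monotonicity at $t=0$ gives $a\ge0$, which in turn produces the correct sign of the boundary data that keeps $v\le0$). A secondary technical point is to justify the moment computations and the strict positivity $I>0$ throughout the classical existence interval, which is ensured by the local theory and the positivity statement of Theorem~\ref{thm1}.
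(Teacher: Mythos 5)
Your proposal follows essentially the same strategy as the paper: both proofs rest on the first moment $\phi(t)=\int_\Omega x_1c\,dx$, on the preserved monotonicity $\partial_{x_1}c\le 0$, and (for $m>1$) on the bound $\int_0^L c\,dx_1\le M_0$ obtained because the $x_1$-average solves the Neumann heat equation on $B'_R$ (this is exactly the paper's estimate \eqref{estimKx1}, proved in Theorem~\ref{BUprofileaisym}(i) and invoked in the blow-up proof). The packaging differs slightly but equivalently. For $m=1$ the paper estimates the two end faces directly ($\int_{\Sigma_1}c\ge M/L$, $\int_{\Sigma_2}c\le 2L^{-2}\phi$) to get $\phi'\le -L^{-1}(M-1)[M-2L^{-1}\phi]$, and proves $\phi(0)<ML/2$ by a change-of-variables trick; your concavity argument on $G(s)=\int_0^s g$ yields the coercivity $a\ge \frac{4}{L^2}\bigl(\frac{LM}{2}-\phi\bigr)$ and exponential growth of $\frac{LM}{2}-\phi$, with the strictness $\frac{LM}{2}-\phi(0)>0$ deduced more cleanly from $\partial_{x_1}c_0\not\equiv 0$. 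For $m>1$, your Chebyshev step $\phi_0\ge M^2/(4I)$ is the same concentration estimate the paper derives from $\phi(t)\le \ell M/2$, and your ODE $I'\le-\kappa I^{-m}$ plays the role of the paper's $\phi'\le -M\ell^{-1}/2<0$ in the invariant regime $\phi\le K$; the balancing of the three terms produces a threshold of the same form as \eqref{condBUc0}.

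The one place where your write-up is genuinely incomplete is the point you yourself flag: preservation of $\partial_{x_1}c\le 0$, which is the most substantial technical component of the paper's Section on blow-up (Proposition~\ref{lemmonotcyl}). The circularity you identify (the sign of the boundary data for $z=\partial_{x_1}c$ on $\Sigma_1\cup\Sigma_2$ requires the sign of $A\cdot e_1$, which requires monotonicity) is real, and the paper does \emph{not} resolve it by a continuity/bootstrap argument. Instead it introduces the auxiliary problem \eqref{pbmPhat}, in which the drift is replaced by $\bigl|\int_\Omega \partial_{x_1}\hat c^m\,dx\bigr|\,e_1$: with the absolute value, the no-flux condition forces $z=\partial_{x_1}\hat c=-|b(t)|\hat c\le 0$ on both end faces \emph{unconditionally}, so the Stampacchia argument closes; once $z\le 0$ is known, $|b|=-b$, and uniqueness identifies $\hat c$ with the original solution $c$. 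Your bootstrap sketch can in principle be completed — note that \eqref{hypcyl3} gives the strict sign $A(0)\cdot e_1<0$, the set where $A\cdot e_1<0$ is then open, and at a putative first time where $A\cdot e_1=0$ one must rule out $z\equiv 0$ via the strong maximum principle — but these endpoint details are precisely what is missing, and the paper's auxiliary-problem device is a cleaner way to avoid them.
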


\begin{remark}
\begin{itemize}
\item[(i)] 
Condition \eqref{condBUc0} is satisfied for a large class of initial data $c_0$
 which verify \eqref{hypcyl2}-\eqref{hypcyl3} and are sufficiently concentrated near $\partial\Omega\cap\{x_1=0\}$.
For instance, for any compactly supported, nontrivial $g\in C^1([0,\infty))$ with $g'\le 0$, and any radially symmetric 
$h\in C^1(\overline B'_R)$, it suffices to consider the initial data $c_0(x)=kg(kx_1)h(x')$
with $k>0$ sufficiently large, depending only on $\psi$ (note that $M=\|g\|_1\|h\|_1$ is independent of $k$).
	
	\smallskip
	
\item[(ii)] The blow-up condition $M>1$ is optimal for $m=1$ in view of Theorem~\ref{thm2sharp}.
The case $m>1$ is strikingly different, since blow-up solutions are shown to exist for arbitrary positive mass.
As a partial result in the direction of Theorem~\ref{thmBU}(i), it was shown in~\cite{CHMV} that if $m=n=1$, $M>1$ 
and \eqref{hypcyl3} holds, then blow-up occurs under the additional assumption $\int_0^L xc_0(x)dx<M/4$.
As in \cite{CHMV}, the proof of Theorem~\ref{thmBU} relies on the first moment $\phi(t)=\int_\Omega  x_1c(t)\dx$ of the solution.
Here, by a more refined analysis of its time variation, we can remove the additional assumption $\phi(0)<M/4$,
as well as extend the result to higher dimensions and to $m>1$.
	
	\smallskip
	
\item[(iii)] Assumptions \eqref{hypcyl1}-\eqref{hypcyl3} guarantee that $\partial_{x_1}c\le 0$ and that the advective 
vector field is colinear to $e_1$, 
properties which are needed in our proof of blow-up.
Regarding the second part of  \eqref{hypcyl3} we note that, if $\partial_{x_1}c_0\equiv 0$, then $c$ exists globally
under assumptions \eqref{hypcyl1}-\eqref{hypcyl2}.
Indeed it is easy to check that $\partial_{x_1}c(\cdot,t)\equiv 0$ and that $c$ 
just solves the heat equation in $B'_R$ with homogeneous Neumann boundary conditions.
	
	\smallskip
	
\item[(iv)] We are so far unable to prove blow-up in more general bounded domains.
The main difficulty is to establish suitable spatial monotonicity of the solution,
in view of the nonlocal, nonlinear nature of the convection term and Neumann boundary conditions.
\end{itemize}
\end{remark}

To conclude this subsection, we observe that 
the occurence of blow-up is 
also conditioned by the sign of the convection term.
Indeed, the following result shows that no blow-up can occur for $f(c)=-c^m$ with $m\ge 1$.

\begin{theorem} \label{thm1aneg}
	Let $1\le m\le p<\infty$ and assume $f(c)=-c^m$. Let $c_0\in L^p(\Omega)$, $c_0\ge 0$, $c_0\not\equiv 0$.
Then problem \eqref{pbmP} admits a unique, global classical solution
	$$		c\in  E_\infty
			\cap C([0,\infty);L^p(\Omega)),$$
	with $c>0$ in $(\Omega\cup\Gamma)\times(0,\infty)$.
		Moreover,   we have $\ds\sup_{t\ge\eps}\|c(t)\|_\infty<\infty$ for each $\eps>0$ 
		and $\lim_{t\to\infty} \|c(t)-\bar c_0\|_\infty=0$. 
\end{theorem}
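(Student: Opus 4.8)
The plan is to treat \eqref{pbmP} with $f(c)=-c^m$ as the sign-favorable counterpart of the problem analyzed in Theorems~\ref{thm1}--\ref{thm2sharp}: the local theory carries over verbatim, while the boundary-driven convection now \emph{dissipates} rather than concentrates energy. First I would establish local existence, uniqueness, positivity and the blow-up continuation criterion exactly as in the proof of Theorem~\ref{thm1}. The semigroup/fixed-point construction, the trace and fractional Sobolev estimates, and the strong maximum principle applied to the \emph{linear} equation $\partial_t c=\Delta c-A(t)\cdot\nabla c$ with boundary condition $(\nabla c-cA)\cdot\nu=0$ are all insensitive to the sign of $f$, since they only use $f(c)=\mp|c|^{m-1}c$ through local Lipschitz and growth bounds. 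As $c_0\ge0$ forces $c>0$, the nonlinearity acts on a nonnegative solution, and we obtain a maximal classical solution on $[0,T^*)$ with the property that, for $p>m$, $T^*<\infty$ implies $\|c(t)\|_p\to\infty$.

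The heart of the matter, and the place where the sign is decisive, is the a priori estimate. Multiplying the equation by $c^{m-1}$, integrating by parts (the boundary term vanishes by the no-flux condition), and using $c^{m-1}\nabla c=\tfrac1m\nabla(c^m)$ together with $A(t)=-\int_{\partial\Omega}c^m\nu\,\dsigma$, I would derive the dissipation identity
\[
\frac{1}{m}\frac{d}{dt}\int_\Omega c^m\dx=-(m-1)\int_\Omega c^{m-2}|\nabla c|^2\dx-\frac{m-1}{m}\,|A(t)|^2\le0 .
\]
Hence $\|c(t)\|_m$ is nonincreasing and, for $m>1$, $\int_0^\infty\bigl(\|\nabla(c^{m/2})\|_2^2+|A(t)|^2\bigr)\,dt<\infty$. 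For the borderline $m=1$ this identity degenerates to mere mass conservation, but the entropy supplies a substitute: the analogous computation with $\log c$ gives $\tfrac{d}{dt}\int_\Omega c\log c\,\dx=-\int_\Omega |\nabla c|^2/c\,\dx-|A(t)|^2\le0$, so $\int_\Omega c\log c$ is a strict Lyapunov functional. In either case the sign of $f$ produces exactly the opposite of the destabilizing boundary term responsible for blow-up in Theorems~\ref{thm2sharp} and~\ref{thmBU}.

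From this uniform bound I would obtain global existence by excluding $L^p$ blow-up. Writing the $L^p$ energy identity, the only dangerous term is the boundary contribution $\tfrac{p-1}{p}\int_{\partial\Omega}c^p(A\cdot\nu)\,\dsigma$, which for $p\ne m$ is not signed; I would estimate it by $|A|\int_{\partial\Omega}c^p\,\dsigma\le\bigl(\int_{\partial\Omega}c^m\,\dsigma\bigr)\bigl(\int_{\partial\Omega}c^p\,\dsigma\bigr)$ and absorb it into the good dissipation $-\tfrac{4(p-1)}{p^2}\|\nabla(c^{p/2})\|_2^2$ through trace and Gagliardo--Nirenberg interpolation, the lower-order factors being controlled by the uniform $L^m$ bound. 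Bootstrapping in $p$ and then invoking the smoothing estimates from the proof of Theorem~\ref{thm1} yields $L^\infty$ bounds uniform for $t$ bounded away from $0$; together with the continuation criterion this gives $T^*=\infty$. For $m=1$ the same conclusion follows more directly from the boundedness of the entropy via the $\eps$-regularity argument underlying Theorem~\ref{thm2sharp}, the favorable sign now removing any mass restriction.

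Finally, for convergence I would run a LaSalle argument. Uniform bounds together with parabolic regularity make $\{c(\cdot,t):t\ge1\}$ precompact in $C(\overline\Omega)$, so the $\omega$-limit set is nonempty, compact and invariant. The strict Lyapunov functional (the $L^m$ norm for $m>1$, the entropy for $m=1$) confines it to the zero set of the dissipation, i.e.\ $\nabla c\equiv0$ and $A\equiv0$, hence to constants; mass conservation identifies the constant as $\bar c_0$, giving $\|c(t)-\bar c_0\|_\infty\to0$. I expect the main obstacle to be precisely the global a priori estimate of the third step: controlling the nonlocal, boundary-driven convection when $p\ne m$ and closing the bootstrap up to $L^\infty$, in particular reconciling the continuation criterion (available only for $p>m$) with the natural energy, which is controlled only at the level $p=m$.
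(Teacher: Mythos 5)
Your proposal follows, in outline, exactly the paper's route: the local theory, positivity and the $p>m$ continuation criterion are carried over from Theorem~\ref{thm1} unchanged (the paper records this as Remark~\ref{remsignf}); your dissipation identities for $\|c(t)\|_m^m$ when $m>1$ and for the entropy when $m=1$ coincide, constant for constant, with those the paper derives from Lemma~\ref{lemFunctionals}; and your LaSalle/compactness argument for $\|c(t)-\bar c_0\|_\infty\to 0$ is the same time-shift argument the paper invokes by referring back to the proof of Theorem~\ref{thm2b}.

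The one place where your execution, as written, would not go through is the passage from the $L^m$/entropy control to $L^p$ bounds for $p\ne m$ (your third step, which you correctly flag as the main obstacle). You propose to bound the dangerous term $|A(t)|\int_{\partial\Omega}c^p\,\dsigma$ by trace and Gagliardo--Nirenberg interpolation and absorb it into the dissipation, ``the lower-order factors being controlled by the uniform $L^m$ bound''. But the prefactor $|A(t)|\le\int_{\partial\Omega}c^m\,\dsigma$ is a boundary trace of $c^m$, and no uniform-in-time bound on it follows from $\sup_t\|c(t)\|_{L^m(\Omega)}<\infty$; a pointwise-in-time absorption whose constants depend only on the $L^m$ bound therefore cannot close. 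What does close the argument---and what the paper isolates as Lemma~\ref{lem:L2b}---is the $L^2$-\emph{in-time} integrability of $A$, which you yourself established in your second step. Writing the convection field as $b(t)$ with $|b(t)|=|A(t)|$, the Cauchy--Schwarz and Young inequalities give
$$\Bigl|b(t)\cdot\int_\Omega c^{p-1}\nabla c\,\dx\Bigr|\le |b(t)|^2\int_\Omega c^p\,\dx+\int_\Omega c^{p-2}|\nabla c|^2\,\dx,$$
so the $L^p$ identity of Lemma~\ref{lemFunctionals}(ii) yields $\frac{\di}{\dt}\|c(t)\|_p^p\le p(p-1)\,|A(t)|^2\,\|c(t)\|_p^p$, and Gronwall together with $\int_{t_0}^{T^*}|A(t)|^2\,\dt<\infty$ gives $c\in L^\infty(t_0,T^*;L^p(\Omega))$ for every $p<\infty$---no trace inequality or interpolation is needed. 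With this replacement, the rest of your argument (smoothing/Schauder estimates, the continuation criterion for some fixed $p'>m$, then the Lyapunov--compactness step) completes the proof exactly as in the paper. Your alternative suggestion for $m=1$ via the $\eps$-regularity machinery of Theorem~\ref{thm2sharp} would also work, since that argument only uses entropy boundedness and sign-insensitive linear estimates, but it is much heavier than necessary: here the entropy identity already gives $A\in L^2(t_0,T^*)$, so Lemma~\ref{lem:L2b} applies directly and without any mass restriction.
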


\subsubsection{Asymptotic behavior of blow-up solutions}

For general solutions, it is easy to see that 
$$\limsup_{t\to T^*}\, \ds\Bigl|\int_{\partial\Omega}f(c)\nu \dsigma\Bigr|=\infty$$
(otherwise, the solution could be extended by means of linear estimates).
Since the mass is preserved, it is to be expected that blow-up singularities should occur only near the boundary,
i.e.~the solution remains bounded away from the boundary.
Under assumptions \eqref{hypcyl1}-\eqref{hypcyl3}, we can rigorously confim this,
along with upper estimates of the spatial blow-up profile.

\begin{theorem}\label{BUprofileaisym}
Assume $m\ge 1$,  $f(c)=c^m$ and \eqref{hypcyl1}-\eqref{hypcyl3}.

\begin{itemize}
\item[(i)] Assume $n\ge 2$. Then we have
\be\label{estimKx1}
c(x,t)\le M_0x_1^{-1}\quad\hbox{ in $\Omega\times(0,T^*)$}.
\ee
In particular, if $T^*<\infty$ then the blow-up set is a subset of $\partial\Omega\cap\{x_1=0\}$.
	\medskip
	
\item[(ii)] Assume $n=1$. Then we have 
 \be\label{profileBU}
 c(x,t) \le (mx)^{-1/m}+K_0,\quad\hbox{in $(0,L]\times(0,T^*)$}.
 \ee
for some constant $K_0=K_0(c_0)>0$.
In particular, if $T^*<\infty$ then $0$ is the only blow-up point.
\end{itemize}
\end{theorem}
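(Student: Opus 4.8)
The plan is to treat the two cases separately, exploiting in both the two structural facts recalled after Theorem~\ref{thmBU}: under \eqref{hypcyl1}--\eqref{hypcyl3} one has $\partial_{x_1}c\le 0$ throughout, and the convective field is colinear to $e_1$, say $A(t)=a(t)e_1$ with $a(t)=\int_{\{x_1=L\}}f(c)\,\dsigma-\int_{\{x_1=0\}}f(c)\,\dsigma\le 0$ (the lateral contributions vanishing by axisymmetry, and $a(t)\le 0$ because $f$ is increasing while $c(L,\cdot)\le c(0,\cdot)$). Since $A$ is spatially constant, the equation reads $\partial_t c=\Delta c-a(t)\partial_{x_1}c$, with homogeneous Neumann condition on the lateral boundary and $\partial_{x_1}c=a(t)c$ on the two caps $\{x_1=0\}$, $\{x_1=L\}$.

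For (i) I would introduce the column mass $G(x',t):=\int_0^L c(x_1,x',t)\,dx_1$. Integrating the equation in $x_1$ over $(0,L)$, the transverse Laplacian produces $\Delta_{x'}G$, while the one-dimensional part and the drift combine with the cap conditions and cancel exactly: $\int_0^L(\partial_{x_1}^2 c-a\,\partial_{x_1}c)\,dx_1=[\partial_{x_1}c-ac]_0^L=0$. Hence $G$ solves the pure heat equation $\partial_t G=\Delta_{x'}G$ on $B'_R$ with homogeneous Neumann conditions, so the maximum principle gives $\|G(\cdot,t)\|_{L^\infty(B'_R)}\le\|G(\cdot,0)\|_{L^\infty(B'_R)}=M_0$. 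Finally, since $c(\cdot,x',t)$ is nonincreasing in $x_1$, for every $x_1\in(0,L]$ we have $x_1\,c(x_1,x',t)\le\int_0^{x_1}c\le G(x',t)\le M_0$, which is \eqref{estimKx1}; the blow-up set statement follows at once because $c$ is then bounded on $\{x_1\ge\delta\}$ for each $\delta>0$.

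For (ii), in one dimension $G\equiv M$ is constant and the preceding argument only yields the crude bound $c\le M/x$; to reach the sharp profile one must use the nonlinearity. The problem reduces to the linear drift--diffusion $c_t=c_{xx}+\beta(t)c_x$ on $(0,L)$ with $\beta(t)=c(0,t)^m-c(L,t)^m\ge 0$, zero-flux conditions $c_x+\beta c=0$ at both ends, and $c_x\le 0$. The function $(mx)^{-1/m}$ is the scale-invariant envelope of the boundary layer at $x=0$ (of width $\sim\beta^{-1}\sim c(0,t)^{-m}$), so I would try to prove \eqref{profileBU} by comparison with $w(x)=(mx)^{-1/m}+K_0$, choosing $K_0\ge\|c_0\|_\infty$ so that $w\ge c_0$ initially while $w=+\infty$ at $x=0$ removes any constraint at the left end. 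A direct computation gives $w_{xx}+\beta w_x=(mx)^{-(1+2m)/m}\bigl[(m+1)-\beta m x\bigr]$, so $w$ is a genuine supersolution precisely on $\{x\ge (m+1)/(m\beta(t))\}$.

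The hard part is the complementary region near $x=0$, where $w$ fails to be a supersolution and a naive touching-point argument does not close: at a first contact point $x_0$ one only extracts $\beta(t_0)\le (m+1)/(mx_0)$, which is consistent with the self-consistent lower bound $\beta(t_0)\ge c(0,t_0)^m-c(L,t_0)^m\ge (mx_0)^{-1}-c(L,t_0)^m$. To beat the factor $m+1$ I would quantify the spatial decay of $c$ across the boundary layer: from $c_x(0,t)=-\beta(t)c(0,t)$ and the sign of the flux $J=c_x+\beta c$ near $x=0$, one should obtain an exponential-type lower bound for the ratio $c(0,t_0)/c(x_0,t_0)$, forcing $\beta(t_0)>(m+1)/(mx_0)$ at any putative contact point and thus closing the comparison; the right end is dealt with separately via $c_x(L,t)=-\beta(t)c(L,t)$ and the crude bound $c\le M/x$ on $[L/2,L]$, absorbed into $K_0$. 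Once \eqref{profileBU} holds, $c$ is bounded on every $[\delta,L]$, so $0$ is the only possible blow-up point. I expect this boundary-layer decay estimate---needed to recover the \emph{sharp} constant in $(mx)^{-1/m}$---to be the main technical obstacle; a suitably scaled barrier $A(mx)^{-1/m}$ with $A>(m+1)^{1/m}$ would yield the profile up to a multiplicative constant much more cheaply, but not the optimal coefficient.
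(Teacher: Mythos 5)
Your proof of part (i) is correct and is essentially the paper's own argument: the column mass $\int_0^L c(x_1,x',t)\,\dx_1$ solves the heat equation on $B'_R$ with homogeneous Neumann conditions (the $x_1$-terms cancel via the cap boundary conditions), and \eqref{estimKx1} follows from the maximum principle combined with $\partial_{x_1}c\le 0$.

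Part (ii), however, is not a proof, and the gap is exactly where you flag it. The barrier $w(x)=(mx)^{-1/m}+K_0$ is a supersolution of $c_t=c_{xx}+\beta(t)c_x$ only on the set where $\beta(t)\ge (m+1)/(mx)$, while at a first touching point $x_0$ the self-consistency argument only gives $\beta(t_0)\ge (mx_0)^{-1}-c^m(L,t_0)$, short of what is needed by the factor $m+1$. The ``boundary-layer decay estimate'' you propose to bridge this factor --- a lower bound of the type $c(0,t_0)\ge (m+1)^{1/m}\,c(x_0,t_0)$ at any touching point --- is left unproved, and it is not clear it can be obtained this way: it requires quantitative information on the solution's profile inside the layer, which is precisely what the comparison argument was supposed to produce, so the scheme is circular as it stands. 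Your fallback barrier $A(mx)^{-1/m}$ with $A>(m+1)^{1/m}$ would indeed close, but it only gives the non-sharp profile, not \eqref{profileBU}.

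The paper closes part (ii) by a Bernstein-type device that avoids the second-order comparison altogether. Set $\phi:=c_x+c^{m+1}-Nc$ with $N=(L^{-1}\|c_0\|_1)^m$. Differentiating $c^{m+1}$ along the flow produces the good-signed term $-m(m+1)c^{m-1}(c_x)^2\le 0$, so $\phi_t-\phi_{xx}\le -a(t)\phi_x$; on $\{0,L\}$ the boundary condition $c_x=a(t)c$, together with $c\le c(0,t)$ and the crude bound $Lc(L,t)\le\|c_0\|_1$ (your $c\le M/x$), gives $\phi\le 0$. The maximum principle then yields the pointwise \emph{first-order} inequality $c_x+c^{m+1}\le Nc+C_1$ on $(0,L)\times(0,T^*)$, with $C_1$ the maximum of $\phi_+$ at $t=0$ (finite since $c_0\in C^1$ by \eqref{hypcyl2}), and \eqref{profileBU} follows from an ODE comparison in $x$ alone: $\psi(x)=(mx)^{-1/m}+\lambda$ is a supersolution of this first-order inequality for $\lambda$ large, and $\psi\to\infty$ as $x\to 0$. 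This also explains why the sharp constant survives there but not in your approach: $(mx)^{-1/m}$ solves $u_x+u^{m+1}=0$ \emph{exactly}, so in the first-order operator the singular leading terms cancel identically and only positive lower-order terms remain, whereas taking two derivatives generates the extra factor $m+1$ that defeated your comparison. If you want to repair your proposal, the efficient route is to prove the differential inequality $c_x+c^{m+1}\le Nc+C_1$ first and then compare, rather than to pursue the boundary-layer estimate.
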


\begin{remark} 
\begin{itemize}
\item[(i)]Estimate \eqref{profileBU} is 
optimal in some sense: one cannot replace $x^{-1/m}$ by $x^{-\alpha}$ with $\alpha<1/m$ 
(since the $L^p$ norm with $p>m$ must blow up as $t\to T^*$).
We do not know whether the exponent in estimate \eqref{estimKx1} can be improved to $1/m$.
The precise spatial shape of the solution at $t=T^*$ is also an open problem.
	\smallskip
	
\item[(ii)]
Under assumptions \eqref{hypcyl1}-\eqref{hypcyl3}, if moreover $c_0$ is decreasing with respect to $\rho=|x'|$, then it is not difficult to show that this remains true for the solution~$c$.
It follows that $c(\cdot,t)$ has a unique maximum at the origin,
so that in particular the origin is a blow-up point.
It is an open question whether there are solutions blowing up only at the origin.
Similarly, if $c_0$ is increasing with respect to $\rho=|x'|$, 
then so is $c$ and $c(\cdot,t)$ takes its maximum at every point of the edge $x_1=0, \rho=R$
(and only there).
\end{itemize}
\end{remark}

Regarding the time rate of blow-up, we have the following lower estimates in general domains.

\begin{theorem} \label{thmBUrate}
Let $f(c)=c^{m}$ with $m\ge 1$.
Let $c_0$ be as in Theorem~\ref{thm1} and assume $T^*<\infty$.
Then we have the lower blow-up estimate:
 \be\label{rateBU1}
 \|c(t)\|_\infty \ge C_1(T^*-t)^{-1/2m}, \ 0<t<T^*,
 \ee
for some constant $C_1(\Omega,m,a)>0$.
Moreover, we have
 \be\label{rateBU2}
\int_0^{T^*}\Bigl|\int_{\partial\Omega}c^m\nu \dsigma\Bigr|^2 \dt=\infty.
  \ee
\end{theorem}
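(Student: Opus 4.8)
\emph{Overall approach.} The plan is to exploit that, since $A(t)=\int_{\partial\Omega}c^m\nu\,d\sigma$ does not depend on $x$, the equation is the linear advection--diffusion equation $\partial_t c=\Delta c-A(t)\cdot\nabla c$ with oblique boundary condition $\partial_\nu c=(A(t)\cdot\nu)\,c$, and that the drift obeys the pointwise bound $|A(t)|\le\int_{\partial\Omega}|c(t)|^m\,d\sigma\le C\|c(t)\|_\infty^m$. I would combine this with the Duhamel representation against the Neumann heat semigroup $(e^{t\Delta_N})_{t\ge0}$ from the proof of Theorem~\ref{thm1}: writing the convective term weakly and using the smoothing bound $\|\nabla e^{\theta\Delta_N}\psi\|_q\le C\theta^{-1/2}\|\psi\|_q$, one obtains, for $0<s<t<T^*$ and $p\in[1,\infty]$,
\begin{equation}\label{eq:smoothBU}
\|c(t)\|_p\le\|c(s)\|_p+C\int_s^t(t-\tau)^{-1/2}|A(\tau)|\,\|c(\tau)\|_p\,d\tau.
\end{equation}

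\emph{Lower rate \eqref{rateBU1}.} For the first estimate I would turn \eqref{eq:smoothBU} (with $p=\infty$) into a lower bound on the existence time. Fixing $t_0\in(0,T^*)$ and $K=\|c(t_0)\|_\infty$ (finite, since $c\in C^{2,1}$ for $t>0$), I would run a continuity argument: as long as $\|c\|_\infty\le2K$ on $[t_0,t]$, inserting $|A|\le C(2K)^m$ into \eqref{eq:smoothBU} gives $\|c(t)\|_\infty\le K+C'K^{m+1}\sqrt{t-t_0}$, so the bound $\|c\|_\infty\le2K$ self-sustains for $t-t_0\le\delta K^{-2m}$ with $\delta=\delta(\Omega,m)>0$. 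Thus the solution cannot blow up before $t_0+\delta K^{-2m}$; since it blows up precisely at $T^*$, this forces $T^*-t_0\ge\delta\|c(t_0)\|_\infty^{-2m}$, which is exactly \eqref{rateBU1}.

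\emph{Integral blow-up \eqref{rateBU2}.} For the second estimate I would argue by contradiction, assuming $\int_0^{T^*}|A|^2\,dt<\infty$ (whence $\int_0^{T^*}|A|\,dt<\infty$ by Cauchy--Schwarz). Fixing any $p>m$ and testing the equation with $|c|^{p-2}c$ (written below for $c\ge0$, the general case being identical with $|c|$), integration by parts together with the boundary condition yields, with $w=c^{p/2}$,
\begin{equation}\label{eq:energyBU}
\frac{d}{dt}\|c\|_p^p=-\frac{4(p-1)}{p}\|\nabla w\|_2^2+(p-1)\int_{\partial\Omega}c^p(A\cdot\nu)\,d\sigma.
\end{equation}
I would bound the boundary term by $(p-1)|A|\,\|w\|_{L^2(\partial\Omega)}^2$, apply the multiplicative trace inequality $\|w\|_{L^2(\partial\Omega)}^2\le C(\|w\|_2\|\nabla w\|_2+\|w\|_2^2)$, and absorb $\|\nabla w\|_2^2$ by Young's inequality, reaching $\frac{d}{dt}\|c\|_p^p\le(C_p|A(t)|^2+C_p|A(t)|)\|c\|_p^p$. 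Gronwall's lemma and the assumed integrability of $|A|^2$ then give $\sup_{(0,T^*)}\|c(t)\|_p<\infty$, which contradicts the continuation property Theorem~\ref{thm1}(iii) (applicable since $p>m$); hence $\int_0^{T^*}|A|^2\,dt=\infty$.

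\emph{Main obstacle.} I expect the delicate points to be the two boundary mechanisms. For \eqref{rateBU2}, the crucial observation is that, after the multiplicative trace inequality and the absorption of $\|\nabla w\|_2^2$, the boundary term leaves a Gronwall factor involving precisely $|A|^2$ (and not $|A|$), which is exactly the quantity appearing in \eqref{rateBU2}. For \eqref{rateBU1}, the real content is the smoothing estimate \eqref{eq:smoothBU} for the oblique (flux) boundary condition; I would borrow it from the semigroup analysis underlying Theorem~\ref{thm1}, after which the lower bound is a soft local-existence-time argument.
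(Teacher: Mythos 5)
Your proposal is correct and follows essentially the same route as the paper: for \eqref{rateBU1} you reproduce the paper's Weissler-type doubling argument based on the Duhamel formula \eqref{NewRepres} and the smoothing estimates of Lemma~\ref{lem:gaussestim}, and for \eqref{rateBU2} you use the same contradiction scheme, namely an $L^p$ energy/Gronwall bound (for some $p>m$) under the assumption $\int_0^{T^*}|A(t)|^2\,\dt<\infty$, contradicting the continuation property of Theorem~\ref{thm1}(iii). The only cosmetic difference is in deriving that $L^p$ bound: the paper's Lemma~\ref{lem:L2b} keeps the convection term as the interior integral $b(t)\cdot\int_\Omega v^{p-1}\nabla v\,\dx$ and absorbs it into the dissipation by Cauchy--Schwarz, whereas you pass to the boundary integral and invoke a multiplicative trace inequality plus Young's inequality; both leave exactly the Gronwall factor $|A|^2$ and conclude identically.
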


\begin{remark} 
\begin{itemize}
\item[(i)] The upper time rate estimate is a (possibly difficult) open problem.
	\smallskip
	
\item[(ii)] Under the assumptions of Theorem~\ref{thm1}(ii), for the $L^p$ norms, we also have the lower blow-up estimate
$$\|c(t)\|_p\ge C_\eps(T^*-t)^{-\frac{p-m}{2pm} -\eps},\quad t\to T^*,$$
 for any $\eps>0$. In the critical case $p=m\ge 1$, we do not know whether the $L^p$ norm blows up at $T^*$
(except for $m=1$ for which it stays bounded).
See \cite{BC96, MiSou, MT} and the references therein for studies on this question for the Fujita equation.
However, the proof of Theorem~\ref{thm1} shows that a blow-up solution cannot stay in a compact set of $L^m$. 
More precisely, this follows from the property:
\be\label{compactexist}
\begin{aligned}
&\hbox{for any compact set $\mathcal{K}$ of $L^m(\Omega)$,
there is a uniform time $T=T(\mathcal{K})$} \\
&\hbox{such that, for any $c_0\in \mathcal{K}$, the solution of  \eqref{pbmP} exists on $[0, T]$.}
\end{aligned}
\ee
\end{itemize}
\end{remark}

\subsection{Contributions of this work}

After a nondimensionalization of the model, see Section~\ref{Sec:Bio}, we obtain equation \eqref{pbmP}.
To be rich enough to describe the different migratory behaviors of the cells, the model \eqref{pbmP} must give rise to different behaviors depending on the function $f$ and the mass $M$. In this work, we investigate whether both static and moving solutions can be observed. Roughly speaking, the static (resp.,~moving) solution  describes the average behavior of Brownian  (resp.,~ballistic) cells.  Theorems \ref{thm2} and \ref{thm2b} show that the function $f$ and the mass $M$ play an important role. Indeed, when $m \in (0,1)$ and if $M$ is small enough, the cell velocity converges to zero. This corresponds to a Brownian behavior. It is the same when $m\ge 1$ and $\|c_0\|_m$ is small enough. Note that this latter condition implies in particular that $M$ is small enough. Moreover, the stabilization property \eqref{stabilc} gives information about the characteristics of the persistence. On the other hand Theorem~\ref{thmBU} shows that if $m=1$ and $M>1$ then the cell velocity becomes infinite in a finite time for a toy cell described as a cylinder.
This type of behavior corresponds to a persistent trajectory. These conclusions give a rigorous justification to the UCSP first introduced in \cite{Maiuri2} since the model we study here is the full model \eqref{pbmP} and not only the one at steady state.

In a forthcoming 
 paper \cite{MeuSou2}, we study the situation of  bounded increasing 
nonlinearities $f$ involving saturation effects, typically of the form $f(c)=Lc/(c+\alpha)$
with $L,\alpha>0$. In such a case one can prove the existence  of traveling wave solutions. This will complete the picture and validate the interest of this model to describe the different migratory modes of the cells.

\subsection{Outline} 
This work is organized as follows. We give some biological justification and present some basic properties of 
problem \eqref{pbmP} in Section~\ref{Sec:Bio}. Section~\ref{sec:pre} contains preliminaries that will be used in the proofs.  Sections~\ref{Sec:proof_th1} and \ref{sec:proof:theorems} contain, respectively, the proofs of Theorem~\ref{thm1} and of Theorems~\ref{thm2}, \ref{thm2b} and \ref{thm1aneg}. Theorem~\ref{thm2sharp} is proved in Section~\ref{sec:sharp}. The blow-up is studied in Section~\ref{sec:blowup}, where Theorem~\ref{thmBU} is proved, and in Section~\ref{sec:blowup:asym}, where the results on blow-up asymptotics are established 
(Theorems~\ref{BUprofileaisym} and \ref{thmBUrate}). 
 Finally we state and prove in Appendix some auxiliary 
results on the solvability and regularity of linear and nonlinear inhomogeneous Neumann problems
in the case of nonsmooth (cylindrical) domains,
which are important for many of our results and for which we have been unable to find a suitable reference.
We note that the regularity results there are specific to the case of cylindrical domains and fail in general Lipschitz domains.

\smallskip

{\bf Acknowledgement.} 
The second author is partially supported by the Labex Inflamex (ANR project 10-LABX-0017).
 The authors are grateful to the referees for careful reading and very useful comments.

\section{Model construction and first properties}\label{Sec:Bio}

In this section we justify, from a biological point of view, the interest of  \eqref{pbmP} and we derive some properties. 

\subsection{Biological justification - Model construction }

Cell motility is a prime example of self-propulsion and one of the simplest example of active system. Here we fix the geometry of the cell described by $\Omega(t)$ with
\begin{equation}\label{eq:domaine_rigide}
	\Omega(t)=\Omega+\int_0^t u(s) \diff \! s,  
\end{equation}
where $\Omega$ is a smoothly bounded domain of $\R^n$, 
and $u(t)$ is the cell velocity, given by
\begin{equation}\label{eq:vit_rigid}
	u(t) = -\frac{\chi}{|\Omega|}\int_{\partial \Omega(t)}f(c) \nu \dsigma,
\end{equation}
where $\chi >0$ and $\nu$ denotes the outward unit normal of $\partial \Omega(t)$.

One of the main ingredients of the model relies on the key assumption that the value $v(t)$ of the actin flow is governed by the asymmetry of the cue concentration profile on the cell membrane: 
\begin{equation}\label{eq:actin}
v(t)= \int_{\partial \Omega(t)}f(c) \nu \dsigma , 
\end{equation}
where $f$ is a function that controls the intensity of the coupling between the actin flow and the asymmetry of the cue
concentration profile.

The phenomenological coupling \eqref{eq:actin} covers the cases where actin flows are generated by asymmetric distributions of either actin polymerization regulators (such as Arpin) or activators of contractility (such as Myosin II or a Myosin II activator), 
which are the main two 
scenarios that were proposed in \cite{Maiuri2}. We here do not aim at describing in details the biochemical steps involved in the process.

We recognize that \eqref{eq:vit_rigid} represents the external force balance on $\Omega(t)$. Hence, the parameter $\chi>0$ describes how the internal solute affects the cell dynamics, namely its velocity.

We describe now the internal solute dynamics and its coupling with the actin flow. The internal solute transport problem is formulated as follows: 
\begin{eqnarray}
	& \partial _t c(x,t)= \cn \left( \nabla c(x,t) + (a-1)c(x,t) u(t) \right) \qquad &  \mbox{ in } \Omega(t), \label{eq:marqueur_rigid}\\
	& \left(\nabla c(x,t) +ac(x,t)u(t) \right)\cdot \nu=0& \mbox{ on } \partial \Omega(t),\label{eq:marqueur_bord_rigid} 
\end{eqnarray}
where $a\in[0,1]$.

In the bulk of the cell, $\Omega(t)$,  fast adsorption on an adhered cortex is assumed. With rapid on and off rates, the transport dynamics are given by \eqref{eq:marqueur_rigid}-\eqref{eq:marqueur_bord_rigid}  where $a$ is the steady fraction of adsorbed molecules not convected by the average actin flow and the effective diffusion coefficient is assumed to be 1.

If we consider the equation in the moving frame of the cell rather than the fixed frame of the lab, we obtain
\begin{eqnarray}\label{mod:bio_1}
	& \partial _t c= \cn \left( \nabla c - \frac{a\chi}{|\Omega|}c \int_{\partial \Omega}f(c) \nu \dsigma \right) \qquad &  \mbox{ in } \Omega, 
	\\
	& \left(\nabla c  -\frac{a\chi}{|\Omega|}c\int_{\partial \Omega}f(c) \nu \dsigma\right) \cdot \nu=0& \mbox{ on } \partial \Omega.\label{mod:bio_2}
\end{eqnarray}
Note that problem \eqref{mod:bio_1}-\eqref{mod:bio_2} is a rigid version of an hydrodynamic model of polarization, migration and deformation of a living cell confined between two parallel surfaces first introduced in \cite{LMVC}. In this latter model, the cell cytoplasm is an out of equilibrium system thanks to the active forces generated in the cytoskeleton. The cytoplasm is described as a passive viscous droplet in the Hele-Shaw flow regime. 
Here, we consider the situation where the surface tension goes to infinity. Informally, this amounts to fixing the geometry of the cell and considering only one condition at the boundary in the model of \cite{LMVC}. Problem \eqref{pbmP} corresponds to a nondimensionalised version of \eqref{mod:bio_1}-\eqref{mod:bio_2}.

\subsection{Conservation of the marker content, nondimensionalization and scaling}\label{SubsecScaling}

Let $M(t)$ denote the mass of  molecular content:
\[
M(t) =\int_{\Omega(t)} c(x,t)\dx. 
\]
On the boundary \eqref{eq:marqueur_bord_rigid} we impose no-flux condition  so that
\begin{align*}
	\frac{\di }{\dt} M(t) &=  \int_{\Omega} \partial _t c \dx = 0.
\end{align*}
Thus, in  \eqref{eq:marqueur_rigid}-\eqref{eq:marqueur_bord_rigid}, we formally have 
conservation of molecular content:
\begin{equation}\label{eq:mass_marker2}
	M(t)=M(0)=M.
\end{equation}

Problem \eqref{mod:bio_1}-\eqref{mod:bio_2} 
 with \eqref{eq:mass_marker2} has the following parameters $a>0$, $\chi>0$, $|\Omega|$, $M>0$. In the case where $f(c)=c^m$, considering $c=\lambda \tilde c$ with $\lambda ^m = \frac{|\Omega|}{a \chi}$, we obtain that $\tilde c$ is solution of \eqref{pbmP} with the only parameter $\tilde M = \int_{\Omega} \tilde c \dx$.

Finally it is useful to identify the scale invariance of the problem and the scale invariant norm.
For $k\in\R$, consider the scaling transformation
\begin{equation}\label{scalingtransfo}
c_\lambda(x,t)=\lambda^kc(\lambda x,\lambda^2t), 
\quad \lambda>0.
\end{equation}
Note that this definition of course requires scale invariant spatial domains, i.e.,~cones.
(However the effect of scaling on the problem is expected to be significant even in the bounded domain case.)
Thus consider the simplest half-space case $\mH=\{x\in\R^n;\ x_n>0\}$.
Assuming that the integral is well defined, we have
$$\partial_t c_\lambda-\Delta c_\lambda=-\lambda^{k+2} \Bigr[\nabla c(\lambda x,\lambda^2t)\cdot \Bigr(\ds\int_{\partial\mH}c^m(y,\lambda^2t) e_n \dsigma_y\Bigr) \Bigr].$$
Since
$$\int_{\partial\mH}c^m(y,\lambda^2t) e_n \dsigma_y
=\lambda^{n-1}\int_{\partial\mH}c^m(\lambda z,\lambda^2t) e_n \dsigma_z
=\lambda^{n-1-km}\int_{\partial\mH}c^m_\lambda(z,t) e_n \dsigma_z,$$
we obtain
$$\partial_t c_\lambda-\Delta c_\lambda=-\lambda^{n-km}  \nabla c_\lambda\cdot \Bigr(\ds
\int_{\partial\mH}c^m_\lambda(z,t) e_n \dsigma_z\Bigr).
$$
Therefore, the PDE in \eqref{pbmP} with $f(c)=\pm c^m$ 
is invariant by the transformation \eqref{scalingtransfo} with $k=n/m$.
We also observe that the $L^q$-norm is invariant by this transformation (only) for $q=m$, namely:
$$\|c_\lambda(\cdot,0)\|_{L^m(\mH)}\equiv \|c(\cdot,0)\|_{L^m(\mH)},\quad \lambda>0.$$

\section{Preliminaries to the existence-uniqueness proofs}\label{sec:pre}

In this section we gather a number of semigroup and regularity properties that will be useful 
in the 
proofs of Theorems~\ref{thm1}, \ref{thm2} and \ref{thm2b}.
Denote by $S(t)$ the Neumann heat semigroup of $\Omega$,
by $G(x,y,t)$ the Neumann heat kernel,
and define the differentiated semigroup
\be\label{defKnabla}
[K_\nabla(t)\psi](x)=\int_\Omega \nabla_yG(x,y,t)\cdot\psi(y)\diff \! y,\quad t>0,\ \psi\in (L^1(\Omega))^n.
\ee
For $q\in [1,\infty)$ and $k\in [0,\infty)$, denote by $\|\cdot\|_{k,q}$ the norm of the 
Sobolev-Slobodecki space $W^{k,q}(\Omega)$.
We shall frequently use the trace embedding
\be\label{traceimb}
W^{k,q}(\Omega)\subset L^q(\partial\Omega),\quad 1\le q<\infty,\quad 
\ee
 with $k=q=1$ or $k>1/q$ if $q>1$
(valid in bounded Lipschitz domains, see \cite{JeK} for $q>1$ and, e.g., \cite{AMR} for $q=1$).
Our first lemma provides the basic linear smoothing estimates.

\begin{lemm}\label{lem:gaussestim}
	Let $T\in(0,\infty)$.
	
		(i) (Gaussian heat kernel estimates) For $i,j\in\{0,1\}$, $G$ satisfies
	\be\label{GaussEst}
	|D^i_xD^j_y G(x,y,t)|\le C(T)t^{-(n+i+j)/2} e^{-C_2|x-y|^2/t},\quad x,y\in\overline\Omega, \ 0<t<T.
	\ee
	
	(ii) Let $1\le q\le r\le\infty$ and $k\in [0,1]$. For all $\phi\in L^q(\Omega)$, we have
	\be\label{smooth1}
	\|S(t)\phi\|_{k,r}\le C(T)t^{-\frac{k}{2}-\frac{n}{2}(\frac{1}{q}-\frac{1}{r})}\|\phi\|_q,\quad 0<t<T
	\ee
	and, for all $\psi\in (L^q(\Omega))^n$,
	\be\label{smooth1b}
	\|K_\nabla(t)\psi\|_r\le C(T)t^{-\frac{1}{2}-\frac{n}{2}(\frac{1}{q}-\frac{1}{r})}\|\psi\|_q,\quad 0<t<T.
	\ee
	
	(iii) Let $q\in[1,\infty]$, $\ell\in[0,\frac{1}{q}]$ and $k\in[\ell,1]$. We have
	\be\label{smooth2}
	\|K_\nabla(t)\psi\|_{k,q}\le C(T)t^{-(1+k-\ell)/2}\|\psi\|_{\ell,q},\quad 0<t<T,\quad  \psi\in (W^{\ell,q}(\Omega))^n,
	\ee
	as well as
		\be\label{smooth3}
		\|K_\nabla(t)\psi\|_{1,q}\le C(T)t^{-\frac{1}{2}-\frac{n}{2}(1-\frac{1}{q})}\|\psi\|_{1,1},\quad 0<t<T,
		\quad  \psi\in (W^{1,1}(\Omega))^n.
		\ee
\end{lemm}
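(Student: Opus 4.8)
All four estimates ultimately rest on the Gaussian bounds of part~(i); the only genuinely delicate point is the fractional gain-of-regularity estimate \eqref{smooth2}. The plan is to establish (i) first, deduce (ii) by Young's inequality and an interpolation inequality, and then reduce (iii) to a single \emph{regularity-preserving} bound via a semigroup factorization.

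\emph{Part (i).} In the smoothly bounded case the bounds \eqref{GaussEst} on $G$ and on its first $x$- and $y$-derivatives are classical, and I would simply invoke the standard parabolic theory for the Neumann heat kernel (\cite{LSU,Fr,Lie}), which yields \eqref{GaussEst} uniformly on $(0,T)$. For the cylinder \eqref{defCyl} the Neumann kernel factorizes as $G(x,y,t)=G_1(x_1,y_1,t)\,G_2(x',y',t)$, where $G_1$ is the one-dimensional Neumann kernel on $(0,L)$ --- for which the method of images gives an explicit Gaussian bound together with bounds on $\partial_{x_1},\partial_{y_1}$ --- and $G_2$ is the smooth Neumann kernel on $B'_R$; multiplying the two factor estimates gives \eqref{GaussEst} on the regular part $\Gamma$, which is all that the boundary conditions of \eqref{pbmP} require.

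\emph{Part (ii).} Both bounds follow from \eqref{GaussEst} by Young's inequality for the convolution-type operators with kernels $G$ and $\nabla_y G$. Setting $g_t(z)=t^{-n/2}e^{-C_2|z|^2/t}$, one has $\|g_t\|_{L^s}\le Ct^{-\frac n2(1-\frac1s)}$ with $\frac1s=1+\frac1r-\frac1q$, whence $\|S(t)\phi\|_r\le Ct^{-\frac n2(\frac1q-\frac1r)}\|\phi\|_q$ and, using the extra factor $t^{-1/2}$ carried by $\nabla_y G$, the bound $\|K_\nabla(t)\psi\|_r\le Ct^{-\frac12-\frac n2(\frac1q-\frac1r)}\|\psi\|_q$, i.e.~\eqref{smooth1b}. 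The same computation with $\nabla_x G$ gives the $k=1$ case of \eqref{smooth1}, the $k=0$ case is the $L^q\to L^r$ bound just proved, and intermediate $k\in(0,1)$ follows from the interpolation inequality $\|u\|_{k,r}\le C\|u\|_r^{1-k}\|u\|_{1,r}^{k}$.

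\emph{Part (iii), the heart of the matter.} The key reduction is the factorization $K_\nabla(t)=S(t/2)\,K_\nabla(t/2)$, which I would obtain from $\nabla_y G(x,y,t)=\int_\Omega G(x,z,t/2)\,\nabla_y G(z,y,t/2)\,dz$. Granting the \emph{regularity-preserving} bound $\|K_\nabla(t)\psi\|_{\ell,q}\le Ct^{-1/2}\|\psi\|_{\ell,q}$ for $\ell\in[0,1/q]$, together with the Neumann-semigroup smoothing $\|S(\tau)u\|_{k,q}\le C\tau^{-(k-\ell)/2}\|u\|_{\ell,q}$ (itself a consequence of \eqref{smooth1} by interpolation of the fractional Sobolev scale), the factorization gives $\|K_\nabla(t)\psi\|_{k,q}\le Ct^{-(k-\ell)/2}\|K_\nabla(t/2)\psi\|_{\ell,q}\le Ct^{-(1+k-\ell)/2}\|\psi\|_{\ell,q}$, which is \eqref{smooth2}. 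Estimate \eqref{smooth3} comes from the same factorization used with the regularity-preserving bound at $q=1$, $\ell=k=1$ for $K_\nabla(t/2)$, followed by the $W^{1,1}\to W^{1,q}$ smoothing of $S(t/2)$ (upgrading integrability while keeping one derivative, as in \eqref{smooth1}), which supplies the factor $t^{-\frac n2(1-\frac1q)}$.

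\emph{Main obstacle.} The hard part will be the regularity-preserving estimate $\|K_\nabla(t)\psi\|_{\ell,q}\le Ct^{-1/2}\|\psi\|_{\ell,q}$ in the fractional range $0<\ell\le 1/q$. I would measure $W^{\ell,q}$ through its Gagliardo seminorm and control $[K_\nabla(t)\psi]_{\ell,q}$ by the kernel difference $\nabla_y G(x,y,t)-\nabla_y G(x',y,t)$: for $|x-x'|\le\sqrt t$ via the mean value theorem and the second-derivative bound $|\nabla_x\nabla_y G|\le Ct^{-(n+2)/2}e^{-C_2|x-y|^2/t}$ from \eqref{GaussEst}, and for $|x-x'|>\sqrt t$ via the first-derivative bound on each term separately. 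The restriction $\ell\le 1/q$ is precisely what makes the resulting double integral converge and, equivalently, identifies $W^{\ell,q}$ with $W^{\ell,q}_0$, so that no boundary/trace compatibility of $\psi$ enters (cf.~\eqref{traceimb}). An alternative route is duality: the adjoint of $K_\nabla(t)$ is $\nabla S(t)$ by symmetry of $G$, and the claim reduces to a negative-norm smoothing estimate for $\nabla S(t)$, again exploiting $\ell\le 1/q$ to identify $(W^{\ell,q})'$ with $W^{-\ell,q'}$ without boundary corrections. I expect this uniform-up-to-the-boundary fractional estimate to be the only substantial difficulty, the remaining steps being routine once \eqref{GaussEst} is secured.
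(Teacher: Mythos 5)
Your parts (i) and (ii) follow the paper's own proof essentially verbatim (classical kernel bounds plus the product structure $G=G_1G_2$ for the cylinder; Young's convolution inequality plus interpolation), so everything hinges on part (iii), where you take a genuinely different route — and where there is a genuine gap. Your entire scheme rests on the ``regularity-preserving'' bound $\|K_\nabla(t)\psi\|_{\ell,q}\le Ct^{-1/2}\|\psi\|_{\ell,q}$ for fractional $\ell\in(0,1/q]$, but the argument you sketch for it cannot produce it. Bounding $[K_\nabla(t)\psi](x)-[K_\nabla(t)\psi](x')$ through the kernel difference $\nabla_yG(x,y,t)-\nabla_yG(x',y,t)$ (mean value theorem plus the mixed-derivative case of \eqref{GaussEst} for $|x-x'|\le\sqrt t$, separate first-derivative bounds for $|x-x'|>\sqrt t$) pairs kernel increments against $\psi$ itself; the Gagliardo seminorm of $\psi$ never enters, so the best such a computation can yield is $\|K_\nabla(t)\psi\|_{\ell,q}\le Ct^{-(1+\ell)/2}\|\psi\|_{q}$, i.e.\ the smoothing estimate (the $\ell=0$ case of \eqref{smooth2}), carrying precisely the extra factor $t^{-\ell/2}$ you were trying to avoid. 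In the whole space one removes this factor by translation invariance, rewriting the operator as a convolution and shifting the increment onto $\psi$; on a bounded domain the Neumann kernel admits no such change of variables, and this is exactly the up-to-the-boundary difficulty. Your duality alternative displaces, but does not resolve, the same problem: it asks for a $t^{-1/2}$ regularity-preserving bound for $\nabla S(t)$ in negative-order spaces, where integrations by parts again generate boundary terms.

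Two secondary steps are also unjustified. First, the smoothing $\|S(\tau)u\|_{k,q}\le C\tau^{-(k-\ell)/2}\|u\|_{\ell,q}$ is not ``a consequence of \eqref{smooth1} by interpolation'': interpolating the endpoints of \eqref{smooth1} gives only the worse exponent $\tau^{-k(1-\ell)/2}$, and the sharp exponent requires either fractional-power calculus for the Neumann Laplacian or a fractional regularity-preservation bound for $S(\tau)$ — which is again the very issue at stake. Second, your derivation of \eqref{smooth3} invokes a $W^{1,1}\to W^{1,q}$ bound for $S(t/2)$ that does not follow from \eqref{smooth1}, since $\nabla$ does not commute with the Neumann semigroup. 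The paper circumvents all of this by never proving fractional regularity-preservation directly: it establishes exactly two endpoint cases — $\ell=0$ (any $k$, $q$), from the kernel bounds, and $q=k=\ell=1$, where the divergence theorem decomposes $K_\nabla(t)\psi$ into a boundary integral plus a bulk integral (see \eqref{KnablaDecomp}), each estimated via the Gaussian bounds and the trace inequality $\|\psi\|_{L^1(\partial\Omega)}\le C\|\psi\|_{1,1}$ — and then obtains every intermediate case of \eqref{smooth2} by operator interpolation with the choices $s=\frac{k-\ell}{1-\ell}$, $p=\frac{1-\ell}{(1/q)-\ell}$; estimate \eqref{smooth3} is likewise proved at the endpoint $q=\infty$ from \eqref{KnablaDecomp} and interpolated against $q=1$. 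To salvage your factorization scheme you would need to import this integration-by-parts/trace argument (or an equivalent) to secure the $W^{1,1}$ endpoint first, and then let interpolation — not a direct Gagliardo-seminorm computation — supply the fractional cases.
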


\begin{proof}
		(i) First consider the case when $\Omega$ is smooth.
		The case $j=0$ of \eqref{GaussEst} is classical, see e.g.~\cite[Section~2]{Mora} and the references therein. 
		The case $i=0$, $j=1$ follows from the case $i=1$, $j=0$
		and the fact that $\partial_{x_k}G(\xi,\zeta,t)=\partial_{y_k}G(\zeta,\xi,t)$, for all $k\in\{1,\dots,n\}$ and $\xi,\zeta\in\Omega$
		(owing to $G(\xi,\zeta,t)=G(\zeta,\xi,t)$).
		To check the case $i=j=1$, using the semigroup property $G(x,y,t)=\int_\Omega G(x,z,t/2)G(z,y,t/2) dz$ and differentiating, we write
		$$\partial_{x_k}\partial_{y_\ell}G(x,y,t)=\int_\Omega \partial_{x_k}G(x,z,t/2) \partial_{y_\ell}G(z,y,t/2) \diff \!  z.$$
		Applying the estimates for $\partial_{x_k}G$, $\partial_{y_\ell}G$ and next the convolution properties of Gaussians, we thus obtain
	$$|\partial_{x_k}\partial_{y_\ell}G(x,y,t)| \le C_1t^{-1}\int_\Omega t^{-n/2}e^{-C|x-z|^2/t} t^{-n/2}e^{-C|y-z|^2/t} \diff \!  z\le C_2t^{-(n/2)-1} e^{-C|x-y|^2/t}.$$
	
Now consider the case of a finite cylinder $\Omega=(0,L)\times B'_R$.
		It is well known that 
			\be\label{productG1G2}
		G(x,y,t)=G_1(x_1,y_1,t)G_2(x',y',t),  
		\ee
		where $x=(x_1,x')$ and $G_1$ and $G_2$ denote the respective kernels of $(0,L)$ and $B'_R$.
		The estimate then immediately follows from the estimates for $G_1, G_2$.

	(ii) Estimate \eqref{smooth1} for $k\in\{0,1\}$ is a direct consequence of \eqref{GaussEst} with $j=0$
	and the Young convolution convolution inequality. The general case follows by interpolation.
	As for \eqref{smooth1b}, it is a direct consequence of \eqref{GaussEst} with $i=0$, $j=1$
		and the Young convolution convolution inequality.

	(iii) We start with the case $\ell=0$.
		By \eqref{GaussEst} with $j=1$ and $i=0$ (resp., $i=1$), we have 
		$$\bigl|[D^iK_\nabla(t)\psi](x)\bigr|\le \int_\Omega |D_x^iD_yG(x,y,t)|\,|\psi(y)|\diff \! y
		\le C \int_\Omega\int_\Omega  t^{-(n+i)/2} e^{-C_1|x-y|^2/t}\,|\psi(y)| \diff \! y $$
		and using the Young convolution inequality, 
		we obtain \eqref{smooth2} with $\ell=0$ and $k=0$ (resp., $k=1$), and the case $\ell=0$, $k\in[0,1]$ follows by interpolation.
		
		We next establish the case $q=k=\ell=1$.
		Integrating by parts, we have
		\be\label{KnablaDecomp}
		[K_\nabla(t)\psi](x)
		=\int_{\partial\Omega} G(x,y,t)\psi(y)\cdot\nu \diff \!  \sigma-\int_\Omega G(x,y,t)\nabla_y\cdot \psi(y)\diff \! y \equiv L_1(x,t)+L_2(x,t).
		\ee
		Applying \eqref{GaussEst} with $i\in\{0,1\}$ and $j=0$ and using Fubini's theorem, we have
		$$\begin{aligned}
			\|D^iL_2(\cdot,t)\|_1
			&\le \int_\Omega\int_\Omega |D^i_xG(x,y,t)|\,|\nabla_y\cdot \psi(y)| \diff \! y  \diff \!  x \\
			& \le \int_\Omega\int_\Omega  t^{-(n+i)/2} e^{-C_1|x-y|^2/t}\,|\nabla_y\cdot \psi(y)| \diff \! y  \diff \!  x \\
			&\le Ct^{-i/2} \int_\Omega\Bigl(\int_\Omega t^{-n/2} e^{-C_1|x-y|^2/t}dx\Bigr)\,|\nabla_y\cdot \psi(y)| \diff \! y 
			\le Ct^{-i/2} \|\psi\|_{1,1},
		\end{aligned}$$
		and
		$$\begin{aligned}
			\|D^iL_1(\cdot,t)\|_1
			&\le \int_\Omega\int_{\partial\Omega} |D_x^iG(x,y,t)|\,|\psi(y)| \diff \!  \sigma \diff \! x
			\le \int_\Omega\int_{\partial\Omega}  t^{-(n+i)/2} e^{-C_1|x-y|^2/t}\,|\psi(y)| \diff \!  \sigma \diff \!  x \\
			&\le Ct^{-i/2} \int_{\partial\Omega}\Bigl(\int_\Omega t^{-n/2} e^{-C_1|x-y|^2/t}dx\Bigr)\,|\psi(y)| \diff \!  \sigma \\
			&\le Ct^{-i/2} \int_{\partial\Omega}|\psi(y)| \diff \!  \sigma
			=Ct^{-i/2}\|\psi\|_{L^1(\partial\Omega)} \le Ct^{-i/2}\|\psi\|_{1,1},
		\end{aligned}$$
		where we also used the trace inequality in the last step.
		This guarantees \eqref{smooth2} for $q=k=\ell=1$.
		
		The case $q\in(1,\infty)$, $\ell\in(0,\frac{1}{q}]$, $k\in[\ell,1]$ follows by interpolation
		between the above two cases. Namely, choosing 
		$s=\frac{k-\ell}{1-\ell}\in[0,1]$ and $p=\frac{1-\ell}{(1/q)-\ell}\in(1,\infty]$
		and observing that
		$k=\ell\cdot 1+(1-\ell)s$ and $1/q=\ell+(1-\ell)/p$,
		we write
		$$\begin{aligned}
			\|K_\nabla(t)\|_{\mathcal{L}(W^{\ell,q},W^{k,q})}
			&\le \|K_\nabla(t)\|_{\mathcal{L}(W^{1,1},W^{1,1})}^\ell \|K_\nabla(t)\|_{\mathcal{L}(L^p,W^{s,p})}^{1-\ell} \\
			&\le Ct^{-\ell/2}t^{-(1+s)(1-\ell)/2}=Ct^{-(1+k-\ell)/2}.
		\end{aligned}$$

Finally, \eqref{smooth3} with $q=\infty$ is a direct consequence of \eqref{GaussEst} with $i=1, j=0$, \eqref{KnablaDecomp},
	and the trace inequality.
	The general case follows by interpolating between $q=1$ 
 	i.e., \eqref{smooth2} with $q=k=\ell=1$) and $q=\infty$.
\end{proof}

\begin{remark}
By minor modifications of the above proof, 
estimates \eqref{GaussEst}-\eqref{smooth3} can be made independent of $T$, upon replacing
the factors of the form $C(T)t^{-\alpha}$ by $C_1(1+t^{-\alpha})$, with $C_1=C_1(\Omega)>0$.
\end{remark}

Recall the notation
\be\label{NewRepres2}
A(t)=\int_{\partial\Omega}f(c(\cdot,t))\nu \dsigma.
\ee
Our next lemma provides a convenient 
representation formula for problem \eqref{pbmP} in terms of the operator $K_\nabla$.

\begin{lemm} \label{lem:repres}
Assume $f$ continuous and
\be\label{hypHoldersmoothin1}
|f(s)|\le C|s|^m,\quad s\in\R,
\ee
for some $m,C>0$. Let $\max(m,1)\le p<\infty$, 
\be\label{casesgamma}
\begin{cases}
1/p<\gamma<2/m,&\hbox{if $p>1$,} \\
\noalign{\vskip 1mm}
\gamma=1,&\hbox{if $p=1$.}
\end{cases}
\ee
Let $T>0$, $c_0\in L^p(\Omega)$ and let $c$ be a classical solution of \eqref{pbmP}, with 
\be\label{regulRepres}
	c\in  E_T
\cap C([0,T);L^p(\Omega)),\quad 
\sup_{t\in(0,T)} t^{\gamma/2}\|c(t)\|_{\gamma,p}<\infty.
\ee
Then we have
\be\label{NewRepres}
c(t)=S(t)c_0+\int_0^t K_\nabla(t-s)[A(s) c(\cdot,s)]\diff \! s,\quad 0<t<T,
\ee
where the integral in \eqref{NewRepres} is absolutely convergent in $L^p(\Omega)$.
\end{lemm}

\begin{proof}
First rewrite \eqref{pbmP} as
$$\begin{aligned}
	\hfill\partial_t c-\Delta c&=-A(t)\cdot\nabla c,
	 &&x\in\Omega,\ 0<t<T,\\
	\hfill \nabla c \cdot \nu 
	&=(A(t)\cdot \nu)c,&&x\in\partial\Omega,\ 0<t<T, \\ 
	\hfill c(x,0)&=c_0(x),&&x\in\Omega.
\end{aligned}
$$
For fixed $\tau\in(0,T)$, setting $c_\tau(t)=c(t+\tau)$, $A_\tau(t)=A(t+\tau)$,
the standard Green representation formula guarantees that
$$ 
\begin{aligned}
c_\tau(x,t)
&=\int_\Omega G(x,y,t)c(y,\tau)\diff \! y-\int_0^t \int_\Omega G(x,y,t-s) \bigl(A_\tau(s)\cdot\nabla c_\tau(y,s)\bigr) \diff \! y\diff \! s \\
&\qquad +\int_0^t \int_{\partial\Omega} G(x,y,t-s) (A_\tau(s)\cdot\nu)c_\tau(y,s)\diff \! \sigma_y \diff \! s,
\quad x\in\Omega,\ 0<t<T-\tau.
\end{aligned}
$$ 
Next, using $A(s)\cdot\nabla c(y,s)=\nabla\cdot\bigl(c(y,s)A(s)\bigr)$ and the divergence theorem, we get
$$\int_{\partial\Omega} G(x,y,t-s) c_\tau(y,s)A_\tau(s)\cdot\nu\diff \! \sigma_y
=\int_\Omega \nabla_y\cdot \bigl(G(x,y,t-s)c_\tau(y,s)A_\tau(s)\bigr) \dy.$$
It follows that
$$
c_\tau(x,t)=\int_\Omega G(x,y,t)c(y,\tau)\diff \! y 
+\int_0^t \int_\Omega \nabla_yG(x,y,t-s)\cdot \bigl(c_\tau(y,s) A_\tau(s)\bigr)\diff \! y \diff \! s,
$$
i.e.,
\be\label{NewRepresTau}
c_\tau(t)=S(t)c(\tau)+\int_0^t K_\nabla(t-s)[A_\tau(s)c_\tau(s)]\diff \! s,\quad 0<t<T-\tau.
\ee
Now fix $t\in(0,T)$. In view of \eqref{regulRepres},
the first term of the RHS of \eqref{NewRepresTau} converges to $S(t)c_0$ in $L^p(\Omega)$ 
as $\tau\to 0$, 
and the integrand in the second term converges to $K_\nabla(t-s)[A(s)c(s)]$ in $L^p(\Omega)$ 
for each $s\in (0,t)$.
Moreover, by the trace embedding \eqref{traceimb}, assumption \eqref{hypHoldersmoothin1} and H\"older's inequality, we have
\be\label{ATauTrace}
|A_\tau(s)|\le C\||c_\tau(s)|^m\|_{L^1(\partial\Omega)}
\le C\|c_\tau(s)\|_{L^p(\partial\Omega)}^m\le C\|c_\tau(s)\|_{{\blue \gamma},p}^{m},\quad s+\tau<t.
\ee
Consequently, for all $\tau\in(0,(T-t)/2)$ and $s\in(0,t)$, we deduce from \eqref{smooth1b},
\eqref{casesgamma} and \eqref{regulRepres} that
$$\begin{aligned}
 \|K_\nabla(t-s)[A_\tau(s)c_\tau(s)]\bigr\|_p
 &\le C(t-s)^{-1/2}|A_\tau(s)|\bigl\|c_\tau(s)\bigr\|_p\\
 &\le C(t-s)^{-1/2}\|c(\tau+s)\|_{{\blue \gamma},p}^{m}\le C(t-s)^{-1/2}s^{-m{\blue \gamma}/2}\in L^1(0,t).
\end{aligned}$$
By dominated convergence, it follows that we may pass to the limit in $L^p(\Omega)$ as $\tau\to 0$ in \eqref{NewRepresTau},
which yields \eqref{NewRepres}.
\end{proof}

For the critical case $p=m$ we shall also need the following lemma,
which is a variant of \cite[Lemma 8]{BC96} (see also \cite{W80}).

\begin{lemm}\label{lem:compactsmoothin}
Let $p\in[1,\infty)$ and $k\in (0,1]$. Let $\mathcal{K}$ be a compact subset of $L^p(\Omega)$.
Set 
\be\label{compactsmoothing1}
\delta(T)=\delta_{k,p,\mathcal{K}}(T):=\sup_{\phi\in\mathcal{K},\, t\in(0,T)} t^{k/2}\|S(t)\phi\|_{k,p}.
\ee
Then $\lim_{T\to 0}\delta(T)=0$.
\end{lemm}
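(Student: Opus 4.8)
The plan is to prove that $\delta(T)\to 0$ as $T\to 0$ by combining the smoothing estimate \eqref{smooth1} with a density/compactness argument over $\mathcal{K}$. The key point is that $\delta(T)$ is nondecreasing in $T$, so it suffices to show that $\liminf_{T\to 0}\delta(T)=0$, and the obstacle is that the naive bound from \eqref{smooth1} alone only gives $t^{k/2}\|S(t)\phi\|_{k,p}\le C\|\phi\|_p$, which is bounded but does \emph{not} tend to $0$. To gain smallness one must exploit additional regularity of $\phi$, which is where compactness of $\mathcal{K}$ enters.

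First I would treat the case of a single smooth function. For $\phi\in W^{k,p}(\Omega)$ (or more simply $\phi\in C^1(\overline\Omega)$, a dense subclass), the semigroup satisfies $S(t)\phi\to\phi$ in $W^{k,p}(\Omega)$ as $t\to 0$, and in particular $\|S(t)\phi\|_{k,p}$ stays bounded as $t\to 0$. Hence for such $\phi$ we get $t^{k/2}\|S(t)\phi\|_{k,p}\le t^{k/2}\sup_{0<s<T}\|S(s)\phi\|_{k,p}\to 0$ as $t\to 0$, so $\sup_{t\in(0,T)}t^{k/2}\|S(t)\phi\|_{k,p}\to 0$ as $T\to 0$ for each fixed regular $\phi$. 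The role of the prefactor $t^{k/2}$ is precisely to kill the (bounded) $W^{k,p}$ norm of $S(t)\phi$ for smooth data.

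Next I would upgrade this to a uniform statement over the compact set $\mathcal{K}$ by an $\eps/3$-type argument. Fix $\eps>0$. Since $\mathcal{K}$ is compact in $L^p(\Omega)$, I can cover it by finitely many $L^p$-balls $B(\phi_j,\rho)$, $j=1,\dots,N$, with centers $\phi_j$ chosen regular (smooth functions are dense in $L^p$, so after shrinking one may take the $\phi_j\in C^1(\overline\Omega)$) and with $\rho$ to be fixed. For any $\phi\in\mathcal{K}$ write $\phi=\phi_j+(\phi-\phi_j)$ with $\|\phi-\phi_j\|_p<\rho$; by linearity and \eqref{smooth1},
\begin{equation*}
t^{k/2}\|S(t)\phi\|_{k,p}\le t^{k/2}\|S(t)\phi_j\|_{k,p}+t^{k/2}\|S(t)(\phi-\phi_j)\|_{k,p}\le t^{k/2}\|S(t)\phi_j\|_{k,p}+C(T)\rho.
\end{equation*}
Choosing $\rho$ so that $C(1)\rho<\eps/2$ controls the second term uniformly in $\phi$ and $t$ (for $T\le 1$), while the first term tends to $0$ as $T\to 0$ by the single-function case, uniformly over the finite index set $j=1,\dots,N$. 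Taking the supremum over $\phi\in\mathcal{K}$ and $t\in(0,T)$ and letting $T\to 0$ then gives $\limsup_{T\to 0}\delta(T)\le\eps/2<\eps$; since $\eps$ is arbitrary, $\delta(T)\to 0$.

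The main obstacle, as noted, is the absence of smallness in the raw smoothing bound: one cannot conclude directly from \eqref{smooth1}, and the argument genuinely needs both the density of regular functions (to get decay for individual data) and the compactness of $\mathcal{K}$ (to make that decay uniform through a finite subcover). A minor technical point I would check is that the constant $C(T)$ in \eqref{smooth1} can be taken nonincreasing, or simply bounded on $(0,1]$, so that the term $C(T)\rho$ is uniformly small for all small $T$; this is immediate from the statement of Lemma~\ref{lem:gaussestim}, where $C(T)$ may be fixed as $C(1)$ for $T\le 1$.
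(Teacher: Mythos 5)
Your proof is correct, and it rests on exactly the same two ingredients as the paper's: the smoothing estimate \eqref{smooth1} to absorb the $L^p$-small part of the data, and the uniform small-time bound $\|S(t)\psi\|_{k,p}\le C\|\psi\|_{k,p}$ on regular data to make the prefactor $t^{k/2}$ produce decay. The structural difference is how compactness is exploited: you take a finite $\rho$-net of $\mathcal{K}$ with smooth centers and run a single triangle-inequality splitting uniformly over the net, whereas the paper first proves the singleton case for \emph{arbitrary} $\phi\in L^p(\Omega)$ (by smooth approximation) and then handles $\mathcal{K}$ by a sequential-compactness contradiction argument, splitting against the limit point $\phi_i\to\phi$. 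These are interchangeable manifestations of compactness; your version is arguably slightly more streamlined since it performs the decomposition only once. One point you should tighten: the boundedness of $\|S(t)\phi\|_{k,p}$ as $t\to 0$ for regular $\phi$, which you assert via strong continuity of $S(t)$ on $W^{k,p}(\Omega)$, is precisely the nontrivial semigroup fact that the paper states as \eqref{compactsmoothing2} and justifies by citation (to \cite{QSb}); since $k$ may be fractional and $p=1$ is allowed, this step deserves a reference or a proof rather than an appeal to strong continuity (note also that only the uniform bound, not convergence in $W^{k,p}$, is actually needed). Your closing remark that $C(T)$ in \eqref{smooth1} may be fixed as $C(1)$ for $T\le 1$ is correct and takes care of the remaining uniformity.
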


\begin{proof}
We first claim that \eqref{compactsmoothing1} holds when $\mathcal{K}$ is a singleton.
Let thus fix $\phi\in L^p(\Omega)$, and pick $\phi_j\in C^\infty_0(\Omega)$ such that $\phi_j\to\phi$ in $L^p(\Omega)$.
It is known that
\be\label{compactsmoothing2}
\|S(t)\psi\|_{1,p}\le C\|\psi\|_{1,p},\quad 0<t\le 1,\ \psi\in W^{1,p}(\Omega),
\ee
where $C=C(n,k,p)>0$
(for $\Omega$ smooth see, e.g.,~\cite[Theorem 51.1(iv) and Example 51.4(ii)]{QSb} 
and the references therein, and cf.~Lemma~\ref{lem:SGW} if $\Omega$ is a cylinder \eqref{defCyl}).
Fix $\eps>0$ and choose $j$ large enough so that $\|\phi-\phi_j\|_p\le\eps$. 
It follows from \eqref{smooth1} and \eqref{compactsmoothing2} that, for all $t\in(0,1)$,
$$\begin{aligned}
t^{k/2}\|S(t)\phi\|_{k,p}
&\le t^{k/2}\|S(t)(\phi-\phi_j)\|_{k,p}+ t^{k/2}\|S(t)\phi_j\|_{k,p} \\
&\le C\|\phi-\phi_j\|_p+Ct^{k/2}\|S(t)\phi_j\|_{1,p}\le C\eps+Ct^{k/2}\|\phi_j\|_{1,p}.
\end{aligned}$$
Consequently, for any $T\in (0,1)$, we have
$\delta_{k,p,\{\phi\}}(T)\le C\eps+CT^{k/2}\|\phi_j\|_{1,p}$.
Therefore,
$\displaystyle\limsup_{T\to 0}\delta_{k,p,\{\phi\}}(T)\le C\eps$, which proves the claim.

Next let $\mathcal{K}$ be a compact subset of $L^p(\Omega)$ and assume for contradiction that 
\eqref{compactsmoothing1} fails.
Then there exist $\eta>0$ and sequences $\phi_i\in \mathcal{K}$, $t_i\to 0$ such that
$t_i^{k/2}\|S(t_i)\phi_i\|_{k,p}\ge \eta.$
By compactness we may assume (passing to a subsequence) that $\phi_i\to \phi$ for some $\phi\in L^p$.
Using \eqref{smooth1}, we then have
$$\eta\le t_i^{k/2}\|S(t_i)\phi_i\|_{k,p}\le t_i^{k/2}\|S(t_i)\phi\|_{k,p}+t_i^{k/2}\|S(t_i)(\phi_i-\phi)\|_{k,p}
\le\delta_{k,p,\{\phi\}}(t_i)+C\|\phi_i-\phi\|_p\to 0$$
as $i\to\infty$ by the above claim: a contradiction.
\end{proof}

Our last 
lemma provides a useful Schauder regularity estimate for problem \eqref{pbmP}.

\begin{lemm} \label{lem:Holdersmoothin}
Let $f$ be continuous and satisfy \eqref{hypHoldersmoothin1}
for some $m,C>0$.
Let $\max(m,1)\le p<\infty$, $q>n$  and $\gamma$ satisfy \eqref{casesgamma}. 
 Let $T_0,N>0$, $T\in(0,\min(T_0,T^*))$, $\eps\in(0,T)$ 
 and let $c$ be a classical solution of \eqref{pbmP} on $[0,T]$ such that
\be\label{hypHoldersmoothin2}
\|c(t)\|_q+\|c(t)\|_{\gamma,p}\le N,\quad t\in [0,T].
\ee
 There exists $\alpha=\alpha(n)\in(0,1)$ such that, if $\Omega$ is smooth then
\be\label{regC2alpha}
\|c(t)\|_{C^{2+\alpha,1+(\alpha/2)}(\overline\Omega\times[\eps,T])}\le C(N,\eps,T_0)
\ee
 and, if $\Omega$ is a cylinder \eqref{defCyl} then, for any 
any compact subset $\Sigma\subset \Omega\cup\Gamma$,
\be\label{regC2alphaB}
 \|c\|_{C^{2+\alpha,1+\alpha/2}(\Sigma\times[\eps,T])}\le C(N,\eps,T_0,\Sigma),
\qquad \|c\|_{C^{1+\alpha,\alpha/2}(\overline\Omega\times[\eps,T])}\le C(N,\eps,T_0).
\ee
\end{lemm}

\begin{proof}
We first establish an $L^\infty$ estimate away from $t=0$. 
Similarly as \eqref{ATauTrace}, 
 recalling the notation \eqref{NewRepres2},
we have
\be\label{controlAB_0}
|A(t)|\le C N^m.
\ee
Using \eqref{smooth1b} and~\eqref{NewRepres}, we get
$$\begin{aligned}
\|c(t)\|_\infty
&\le \|S(t)c_0\|_\infty+\int_0^t |A(s)| \|K_\nabla(t-s)c(s)\|_\infty\diff \! s \\
&\le Ct^{-n/2q}\|c_0\|_q+C N^m\int_0^t (t-s)^{-\frac12-\frac{n}{2q}} \|c(s)\|_q \diff \! s,
\end{aligned}$$
hence
$$
\|c(t)\|_\infty\le C N t^{-n/2q}+C N^{m+1}t^{\frac12(1-\frac{n}{q})}.
$$
This guarantees that
\be\label{controlcepsT}
\|c\|_{L^\infty(\Omega\times[\eps,T])}\le C(N,\eps,T_0).
\ee

 Assume that $\Omega$ is smooth.
In this case we can apply parabolic Schauder regularity theory for the Neumann problem.
Fix any $\eps\in(0,T)$. Next setting $\tilde c_\eps(x,t)=t\,c(x,\eps+t)$, we see that $\tilde c_\eps$ 
satisfies the problem
$$L\tilde c_\eps:=\partial_t \tilde c_\eps-\Delta \tilde c_\eps+b_{1,\eps}\cdot\nabla \tilde c_\eps=b_{3,\eps}\ \hbox{ in $Q_{T-\eps}$, \quad
$\nabla\tilde c_\eps \cdot \nu=b_{2,\eps}\tilde c_\eps$ on $S_{T-\eps}$},$$
 with zero initial data, where 
 $$b_{1,\eps}(t)=A(\eps+t), \quad b_{2,\eps}(t)=A(\eps+t)\cdot\nu, 
 \quad b_{3,\eps}(x,t)=c(x,\eps+t).$$
Since $|b_{i,\eps}|\le C(N,\eps)$ owing to 
\eqref{controlAB_0}, \eqref{controlcepsT},
it follows from \cite[Theorem~6.44]{Lie} 
that, for some $\alpha=\alpha(n)\in(0,1)$, 
$\|\tilde c_\eps\|_{C^{\alpha,\alpha/2}(\overline\Omega\times[0,T-\eps])}\le C(N,\eps,T_0)$, hence
$$\|c\|_{C^{\alpha,\alpha/2}(\overline\Omega\times[\eps,T])}\le C(N,\eps,T_0).$$
Since now the $b_{i,\eps}$ satisfy H\"older bounds (depending on $N, \eps, T_0$),
it follows from \cite[Theorems~4.30]{Lie} 
that $\|\nabla \tilde c_\eps\|_{C^{\alpha,\alpha/2}(\overline\Omega\times[0,T-\eps])}\le C(N,\eps)$, hence
$$\|\nabla c\|_{C^{\alpha,\alpha/2}(\overline\Omega\times[\eps,T])}\le C(N,\eps).$$
Estimate \eqref{regC2alpha} then follows similarly from \cite[Theorem~4.31]{Lie}.

 Next consider the case when $\Omega$ is a cylinder \eqref{defCyl}.
We cannot apply the above and rely instead on the estimates in Proposition~\ref{locexist-cyl}(vi) from the Appendix.
Namely, viewing $c$ as the unique classical solution $\tilde c$,
given by Proposition~\ref{locexist-cyl}, of the initial boundary value problem
starting at initial time $\eps/2$, \eqref{regC2alphaB} follows from 
 estimates \eqref{regulCalpha} applied to $\tilde c$
 (with $\eps$ replaced by $\eps/2$), along with \eqref{controlcepsT}.
\end{proof}

\begin{remark} \label{remconservmass}
As formally observed above, for any classical solution 
$c\in  E_T\cap C([0,T);L^1(\Omega))$ of \eqref{pbmP},
we have the conservation property
\be\label{Conserv}
\int_\Omega c(x,t)\dx=\int_\Omega c_0(x)\dx,\quad 0<t<T,
\ee
as a consequence of
$\frac{\di}{\dt}\int_\Omega c(x,t)\dx=0$ on $(0,T)$.
 If $\Omega$ is smooth, this immediately follows by integrating the equation in space and using the boundary conditions.
 If $\Omega$ is a cylinder~\eqref{defCyl}, we have
\be\label{regulH1L2}
c\in  L^2_{loc}((0,T);H^2(\Omega))\cap H^1_{loc}((0,T);L^2(\Omega))
\ee
by Proposition~\ref{locexist-cyl} (applied at each initial time $\eps\in(0,T^*)$), so that the above remains valid.
\end{remark}

\section{Proof of Theorem~\ref{thm1}}\label{Sec:proof_th1}

In view of eventually establishing Theorems~\ref{thm2} and \ref{thm2b}, we shall prove
Theorem~\ref{thm1} jointly with the following proposition.

\begin{prop} \label{thm2prop}
	(i) Let $m\in(0,1)$ and let $f$ be globally Lipschitz continuous on $\R$ and satisfy 
	\be\label{Hypfsm}
	0\le f(s)\le C|s|^m,\quad s\in\R,
	\ee
	for some $C>0$.
	Let $c_0\in L^1(\Omega)$.
	Problem \eqref{pbmP} admits a unique, maximal classical solution
$$c\in  E_{T^*}\cap C([0,T^*);L^1(\Omega)),$$
	which also satisfies $c>0$ in $(\Omega\cup\Gamma)\times(0,T^*)$ if $c_0\ge 0$, $c_0\not\equiv 0$.
		Moreover, setting $q=n/(n-1)$ (any finite $q$ if $n=1$), we have $T^*>1$ and
		\be\label{regulc2a}
		t^{1/2}\|c(t)\|_q\le C(\Omega)\|c_0\|_1,\quad 0<t<1.
		\ee
 Furthermore, if $T^*<\infty$, then $\lim_{t\to T^*}\|c(t)\|_1=\infty$. 
	 	\smallskip
		
	 (ii) Let $m\ge 1$ and $f(s)=s^m$.
	Let $c_0\in L^m(\Omega)$ and assume that $\|c_0\|_m\le\eta_0$,
	where $\eta_0$ is from \eqref{defT0cases}.
	Problem \eqref{pbmP} admits a unique classical solution
	$c\in  E_1\cap C([0,1);L^m(\Omega))$.
	Moreover, setting $q=nm/(n-1)$ (any finite $q$ if $n=1$), we have 
	\be\label{regulc2b}
	t^{1/2m}\|c(t)\|_q\le C(\Omega,m)\|c_0\|_m,\quad 0<t<1.
	\ee
\end{prop}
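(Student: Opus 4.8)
The plan is to solve \eqref{pbmP} through its mild formulation \eqref{NewRepres}, realizing a local solution as a fixed point of
$$\Phi(c)(t)=S(t)c_0+\int_0^t K_\nabla(t-s)[A(s)c(\cdot,s)]\,ds$$
on a scale-adapted Banach space. Writing $p=1$ in case (i) and $p=m$ in case (ii), I would work in
$$X_T=\Bigl\{c:\ \|c\|_{X_T}:=\sup_{0<t<T}\bigl(\|c(t)\|_p+t^{1/2p}\|c(t)\|_{1/p,p}\bigr)<\infty\Bigr\},$$
which is exactly the regularity class \eqref{regulRepres} that legitimates \eqref{NewRepres} via Lemma~\ref{lem:repres}. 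The weight $t^{1/2p}$ is dictated by the scaling of the problem, and the Sobolev embedding $W^{1/p,p}(\Omega)\hookrightarrow L^q(\Omega)$ with $q=np/(n-1)$ converts control of $\|c\|_{X_T}$ directly into the smoothing bounds \eqref{regulc2a}, \eqref{regulc2b}.

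The heart of the matter is to estimate $\Phi$ using Lemma~\ref{lem:gaussestim} and the nonlocal factor $A(s)$. By the trace embedding \eqref{traceimb}, H\"older's inequality (recall $m\le p$) and \eqref{hypHoldersmoothin1}, one has $|A(s)|\le C\|c(s)\|_{1/p,p}^m\le Cs^{-m/2p}\|c\|_{X_s}^m$. For the $L^p$ component I would apply \eqref{smooth1b} with exponents $(p,p)$, and for the weighted $W^{1/p,p}$ component the estimates \eqref{smooth2}--\eqref{smooth3}; since $A(s)$ is spatially constant it factors out of every Sobolev norm of $A(s)c(s)$. Feeding in the bound on $|A(s)|$ produces Duhamel integrands of Beta type, $(t-s)^{-\theta}s^{-\mu}$, with $\mu<1$ precisely because $m\le p$ (subquadratic/critical range). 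At the borderline $m=p$ the $W^{1/p,p}$ estimate must be closed by interpolating $\|c(s)\|_{\ell,p}\le\|c(s)\|_p^{1-\ell p}\|c(s)\|_{1/p,p}^{\ell p}$ for an intermediate $\ell\in(0,1/p)$, which keeps both exponents of the Beta integral strictly below $1$. The outcome is $\|\Phi(c)\|_{X_T}\le C\|c_0\|_p+C\,\omega(T)\,\|c\|_{X_T}^{m+1}$, with an analogous contraction estimate for $\Phi(c)-\Phi(\tilde c)$.

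The decisive dichotomy lies in the prefactor $\omega(T)$. In case (i) one finds $\omega(T)=T^{(1-m)/2}\to0$ as $T\to0$: this subcriticality lets $\Phi$ contract a ball of radius $\sim 2C\|c_0\|_1$ for $T$ depending only on $\|c_0\|_1$, for arbitrary $L^1$ data. In case (ii) the problem is scale invariant in $L^m$, so $\omega(T)\equiv C$ is independent of $T$ and shrinking the interval is of no help; the contraction on the fixed interval $[0,1)$ then closes only because the smallness $\|c_0\|_m\le\eta_0$ dominates the superlinear term $C\|c\|_{X_T}^{m+1}$ (here $m+1\ge2$ is what is used). Banach's theorem yields a unique $c\in X_T$ with $\|c\|_{X_T}\le C\|c_0\|_p$, whence \eqref{regulc2a}/\eqref{regulc2b} by the embedding above, and $c\in C([0,T);L^p)$ by strong continuity of $S(t)$ together with vanishing of the Duhamel term at $t=0$. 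Interior smoothness is upgraded to $C^{2,1}$ by freezing $A(t)$ (now a bounded, H\"older coefficient) and invoking Lemma~\ref{lem:Holdersmoothin}; positivity for $c_0\ge0$, $c_0\not\equiv0$ follows from the strong maximum principle and Hopf's lemma applied to the linear drift problem $\partial_tc-\Delta c=-A(t)\cdot\nabla c$, $\nabla c\cdot\nu=(A\cdot\nu)c$. Finally, in case (i), since the existence time is bounded below by a function of $\|c_0\|_1$ alone and the mass is conserved \eqref{Conserv}, a restart argument propagates \eqref{regulc2a} up to $t=1$ and gives the blow-up alternative $\lim_{t\to T^*}\|c(t)\|_1=\infty$ when $T^*<\infty$.

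The step I expect to be most delicate is the critical case $p=m$: there the free term $t^{1/2p}\|S(t)c_0\|_{1/p,p}$ is only \emph{bounded} (not small) as $t\to0$, so the fixed point genuinely relies on smallness of the data, and attaining continuity into $L^m$ at $t=0$ as well as securing a uniform existence time for merely bounded --- rather than small --- $L^m$ data (as needed for the continuation property in Theorem~\ref{thm1} and for \eqref{compactexist}) is exactly where the compactness input of Lemma~\ref{lem:compactsmoothin}, through a decomposition of $c_0$ into a small $L^m$ tail plus a bounded part, becomes indispensable.
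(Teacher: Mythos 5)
Your proposal follows a genuinely different architecture from the paper's. You run a Weissler-type \cite{W80} Banach fixed point for the mild formulation \eqref{NewRepres} in the weighted space $X_T$, whereas the paper never uses a contraction for existence: it approximates $c_0$ by smooth data $c_{0,i}\in C^\infty_0(\Omega)$, invokes classical solvability \cite{Am} for each $c_{0,i}$, proves uniform bounds on $M_{i,T}=\sup_t\|c_i(t)\|_p$ and $N_{i,T}=\sup_t t^{1/2p}\|c_i(t)\|_{1/p,p}$ by a continuity-in-$T$ argument, upgrades them to uniform Schauder bounds via Lemma~\ref{lem:Holdersmoothin}, and extracts a $C^{2,1}_{loc}$ limit. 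Your Duhamel estimates are the same as the paper's (same trace bound on $A(s)$, same Beta integrals, the prefactor $T^{(1-m)/2}$ in case (i), the small-data mechanism when $p=m$), and your appeal to Lemma~\ref{lem:compactsmoothin} for continuity into $L^m$ at $t=0$ in the critical case is on target. But the architectural difference is not cosmetic, and it leaves genuine gaps.

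First, your passage from fixed point to \emph{classical} solution is circular as written: Lemma~\ref{lem:Holdersmoothin} is stated for classical solutions of \eqref{pbmP}, so it cannot be invoked to prove that the mild solution is classical. The same difficulty affects positivity: Lemma~\ref{lempos1} (and any Stampacchia/Hopf argument) requires continuity up to $t=0$, which a solution with merely $L^1$ data lacks; running Stampacchia on $(\tau,T)$ and letting $\tau\to 0$ does not close, since $\|c_-(\tau)\|_1\to 0$ comes with no rate that can beat the smoothing blow-up of $\|c(\tau)\|_q$. Repairing both points essentially forces you to re-introduce the smooth approximations and the identification ``classical solution of \cite{Am} $=$ fixed point'' (via Lemma~\ref{lem:repres} plus uniqueness in $X_T$), i.e.\ the paper's scheme; the alternative is a substantial bootstrap of the mild solution through $W^{1,q}$ for all $q$, time-H\"older regularity of $A(t)$, and identification with the classical solution of the linear drift problem --- none of which is in your text.

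Second, Banach's theorem gives uniqueness only within (a ball of) $X_T$, while the proposition asserts uniqueness in $C^{2,1}(\overline\Omega\times(0,T^*))\cap C([0,T^*);L^p(\Omega))$, a class carrying no a priori weighted bound near $t=0$. Closing this gap is exactly the content of the paper's Proposition~\ref{PropUniqLp}: restart an arbitrary solution at time $\tau>0$, compare on a time interval that is uniform in $\tau$ thanks to \eqref{compactexist} and Lemma~\ref{lem:compactsmoothin}, then let $\tau\to 0$; your proposal does not address this step. A smaller slip: the interpolation $\|c\|_{1/2,1}\le\|c\|_1^{1/2}\|c\|_{1,1}^{1/2}$ is forced whenever $p=1$ --- that is, throughout case (i) and in case (ii) with $m=1$ --- because \eqref{smooth2} with $k=1$, $\ell=0$ produces the non-integrable kernel $(t-s)^{-1}$; it is not tied to the borderline $m=p$ as you state, and as written your $W^{1,1}$ estimate in case (i) would diverge.
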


\subsection{Proof of Theorem~\ref{thm1} and Proposition~\ref{thm2prop}: existence}
\label{subseclocexist}

{\bf Step 1.} {\it Approximating problem.} 
In the subcritical case $p>m$ (including $p=1>m$ for Proposition~\ref{thm2prop}), we fix $M\ge 1$, pick $c_0\in L^p$ with $\|c_0\|_p\le M$, 
and we select a sequence of initial data $c_{0,i}\in C^\infty_0(\Omega)$
 such that $c_{0,i}\to c_0$ in $L^p(\Omega)$ and $\|c_{0,i}\|_p\le 2\|c_0\|_p$.
  If $c_0\ge 0$, we may assume that the $c_{0,i}$ are nonnegative.

In the critical case $p=m\ge 1$, in view of proving also property \eqref{compactexist}
(this property will be useful also for the uniqueness part), 
we fix a compact set $\mathcal{K}$ of $L^m(\Omega)$. 
Since $\mathcal{K}$ can be covered by finitely many balls of given radius,
one can construct a sequence of functions $\phi_j\in C^\infty_0(\Omega)$
such that, for each given $c_0\in\mathcal{K}$, there exists a subsequence $c_{0,i}$ of $\phi_j$
which converges to $c_0$ in $L^m(\Omega)$. Moreover, 
\be\label{DeftildeK}
\tilde{\mathcal{K}}=\overline{\{\phi_j,\,j\in\N\}} {}^{L^m}
\ee
 is also compact in $L^m$.
 If $\mathcal{K}$ consists of nonnegative functions, we may furthermore assume that the $\phi_j$ are nonnegative.

 Problem \eqref{pbmP} with initial data $c_{0,i}$ admits a unique, maximal classical solution 
\be\label{Regulci}
\hbox{$c_i\in  E_{T_i}\cap C([0, T_i); W_q)$ 
for all $q\in[1,\infty)$,}
\ee
 with $W_q=W^{2,q}(\Omega)$ if $\Omega$ is smooth and 
$W_q=W^{1,q}(\Omega)$ if $\Omega$ is a cylinder \eqref{defCyl}.
Moreover, if its maximal existence time $T_i$ is finite, then $\lim_{t\to T_i} \|c_i(t)\|_\infty=\infty$.
 When $\Omega$ is smooth,
this follows, e.g.,~from (a minor modification of the proof of) \cite[Theorem~6.1]{Am},
noting that the required compatibility condition $\partial_\nu c_0 =(A(0)\cdot \nu)c_0 \ (=0)$ on $\partial\Omega$ is in particular satisfied.
See also \cite{LSU, Fr, Lie} for more classical approaches.
 In the case of nonsmooth (cylindrical) domains, for which we have been unable to find a suitable reference,
this is proved in Proposition~\ref{locexist-cyl} of the Appendix.

\smallskip
{\bf Step 2.} {\it Auxiliary norm estimates.} 
Let $0<T<\min(1,T_i)$.
 We need to distinguish several cases according to whether $p$ is larger or equal to $m$ or to $1$.
To this end we fix $\gamma, \theta, a$ such that
\be\label{casesgamma2}
\begin{cases}
\frac{1}{p}<\gamma<\min(1,\frac{1}{m}),\quad\theta=0,&\quad\hbox{if $p>\max(m,1)$,} \\
\noalign{\vskip 1mm}
\gamma=1,\quad\theta=\frac{1}{2p},&\quad\hbox{if $p=\max(m,1)$,} \\
\end{cases}
\qquad
a=
\begin{cases}
m,&\quad\hbox{if $p>m$,} \\
\noalign{\vskip 2mm}
1,&\quad\hbox{if $p=m$} 
\end{cases}
\ee
(note that $\gamma a\le 1$) and, for $\mu\in\{0,\theta\}$, we set
\be\label{NewSpace}
M_{i,T}=\sup_{t\in(0,T)} \|c_i(t)\|_p,
\quad N_{i,T}=\sup_{t\in(0,T)} t^{\gamma/2}\|c_i(t)\|_{\gamma,p},
\quad  L_{\mu,i,T}=N_{i,T}^{a+\mu}M_{i,T}^{m+1-a-\mu}.
\ee
We claim that
\be\label{estimMiTpart1}
M_{i,T} \le C_0 \|c_0\|_p + C_0 L_{0,i,T}T^{\frac12(1-\gamma a)},
\ee
and
\be\label{estimMiTpart2}
N_{i,T} \le \sup_{t\in(0,T)} t^{\gamma/2}\|S(t)c_{i,0}\|_{\gamma,p}+ 
C_0 L_{\theta,i,T} T^{\frac12(1-\gamma a)}
\ee
with some $C_0=C_0(\Omega,m,p,\gamma)>1$.

Let $k\in[0,1)$ and $0<t<T<\min(1,T_i)$. Set
$$A_i(t)=\int_{\partial\Omega}f(c_i(\cdot,t))\nu \dsigma.$$
By \eqref{smooth2} with $\ell=0$ and~\eqref{NewRepres}, we have
\be\label{NLestim1}
\|c_i(t)-S(t)c_{0,i}\|_{k,p}\le C\int_0^t (t-s)^{-(k+1)/2}|A_i(s)|\|c_i(s)\|_p \diff \! s.
\ee
Here and below, $C$ denotes a generic positive constant depending only on $\Omega, m, p, \gamma,k$.
On the other hand,  we observe that
\be\label{controlAB}
 |A_i(t)| \le C\||c_i(t)|^m\|_{L^1(\partial\Omega)} \le C\|c_i(t)\|^{m-a}_p\|c_i(t)\|^a_{\gamma,p}.
\ee
Indeed, by the trace embedding \eqref{traceimb}, H\"older's inequality and the boundedness of $\partial\Omega$, 
  if $p>m=a$ then we have
$\||c_i(t)|^m\|_{L^1(\partial\Omega)}
\le C\|c_i(t)\|_{L^p(\partial\Omega)}^m\le C\|c_i(t)\|_{{\blue \gamma},p}^{m}$.
  Whereas, if $p=m$, hence $a=\gamma=1$ then, using \eqref{traceimb} and $\|\nabla|c_i|^m\|_1=m\||c_i|^{m-1}\nabla c_i\|_1
\le m\|c_i\|_m^{m-1}\|\nabla c_i\|_m$, we get
$$ 
|A_i(t)| \le C \||c_i(t)|^m\|_{L^1(\partial\Omega)}\le C\||c_i(t)|^m\|_{1,1}
\le C\|c_i(t)\|^{m-1}_m\|c_i(t)\|_{1,m}.
$$
It follows from \eqref{NLestim1} and \eqref{controlAB} that 
$$\begin{aligned}
t^{k/2}\|c_i(t)-S(t)c_{0,i}\|_{k,p}
&\le C L_{0,i,T}t^{k/2} \int_0^t (t-s)^{-(k+1)/2} s^{-{\blue \gamma a}/2} \diff \! s\\
&\le C L_{0,i,T}t^{\frac12(1-{\blue \gamma a})} \int_0^1 (1-\tau)^{-(k+1)/2} \tau^{-{\blue \gamma a}/2} \diff \! \tau,
\end{aligned}$$
where the integrals are convergent, owing to $k<1$ and ${\blue \gamma a\le 1}$. 
Consequently,
\be\label{estimMiT1}
t^{k/2}\|c_i(t)-S(t)c_{0,i}\|_{k,p}\le C  L_{0,i,T}T^{\frac12(1-{\blue \gamma a})}, \ k\in[0,1),
\ \ 0<t<T. 
\ee
Taking $k=0$ and using \eqref{smooth1} we get \eqref{estimMiTpart1},
and, taking $k={\blue \gamma}$, we deduce \eqref{estimMiTpart2} for  $p>\max(m,1)$. .
Next consider \eqref{estimMiTpart2} in the case  $p=\max(m,1)$, hence $\gamma=1$, 
$\theta=\frac{1}{2p}$.
To overcome the divergence of the integral in 
\eqref{NLestim1} for $k=1$, we instead use \eqref{smooth2} 
with $\ell=1/2p$, $k=1$, $q=p$ and \eqref{NewRepres}, to write
$$\|c_i(t)-S(t)c_{0,i}\|_{1, p} \le C\int_0^t (t-s)^{-1+\frac{1}{4p}} |A_i(s)| \|c_i(s)\|_{1/2p,p} \diff \!  s.$$
Since $\|c_i(s)\|_{1/2p,p}\le \|c_i(s)\|_{1, p}^{1/2p}\|c_i(s)\|_{p}^{1-(1/2p)}$ by interpolation,  \eqref{controlAB} yields
$$ |A_i(s)| \|c_i(s)\|_{1/2p,p}\le 
C\|c_i(t)\|^{a+\frac{1}{2p}}_{1,p}\|c_i(t)\|^{m-a+1-\frac{1}{2p}}_p
\le L_{\theta,i,T}s^{-\frac{a}{2}-\frac{1}{4p}}.$$
We then have
$$\begin{aligned}
t^{1/2}\|c_i(t)-S(t)c_{0,i}\|_{1,p}
&\le C L_{\theta,i,T}t^{1/2}\int_0^t (t-s)^{-1+\frac{1}{4p}} s^{-\frac{a}{2}-\frac{1}{4p}}\diff \! s
\le C L_{\theta,i,T}T^{\frac12(1-{\red a})},
\end{aligned}$$
owing to $a\le 1$,
hence \eqref{estimMiTpart2}  for $p=\max(m,1)$.

We next consider the following two cases separately: supercritical or small data critical, and general critical case.

{\bf Step 3.} {\it Supercritical and small data critical cases.}
Set $\eta_0:=(2C_0)^{-(m+1)/m}$, where $C_0$ is from \eqref{estimMiTpart1}-\eqref{estimMiTpart2}. Assume either 
\be\label{defT0cases}
\begin{cases}
&\hbox{$p>m$} \\
\noalign{\vskip 1mm}
&\hbox{or} \\
\noalign{\vskip 1mm}
&\hbox{$p=m$, \ $\|c_0\|_p<\eta_0$,} 
\end{cases}
\qquad \hbox{ and set }\ 
T_0:=
\begin{cases}
&\hbox{$[1+(2C_0)^{m+1}\|c_0\|_p^m]^{-\frac{2}{1-\gamma a}}$} \\ 
\noalign{\vskip 1mm}
&\hbox{ } \\
\noalign{\vskip 1mm}
&\hbox{$1$} 
\end{cases}
\ee
(in the first case we have $\gamma a<1$ by \eqref{casesgamma2}, and the second case corresponds to Proposition~\ref{thm2prop}(ii)).

We first claim that 
\be\label{estimMiTb0}
M_{i,T}+N_{i,T}<2C_0\|c_0\|_p \ \ \hbox{for all $T\le\tilde T_i:=\min(T_0,T_i)$}.
\ee
Indeed, by \eqref{estimMiTpart1}, \eqref{estimMiTpart2} and \eqref{smooth1}, we have
\be\label{estimMiT}
M_{i,T}+N_{i,T}\le C_0\|c_0\|_p+C_0(M_{i,T}+N_{i,T})^{m+1}T^{\frac12(1-\gamma a)},\quad 0<T<\min(1,T_i).
\ee
Moreover, for each $i$,  by \eqref{Regulci} we have
\be\label{limsupMiT}
\limsup_{T\to 0} \,(M_{i,T}+N_{i,T}) \le \|c_{0,i}\|_p\le C_0\|c_0\|_p.
\ee
Therefore, if \eqref{estimMiTb0} fails then, by \eqref{estimMiT} and continuity there exist
$i$ and a minimal $T<\min(T_0,T_i)$ such that
$$M_{i,T}+N_{i,T}=2C_0\|c_0\|_p\le C_0\|c_0\|_p+C_0(2C_0\|c_0\|_p)^{m+1}T^{\frac12(1-\gamma a)},$$
hence (supposing $c_0\not\equiv 0$ without loss of generality) 
$1\le(2C_0)^{m+1}\|c_0\|_p^mT_0^{\frac12(1-\gamma a)}$: a contradiction.

 In this paragraph we consider the consider when $\Omega$ is smooth. 
We show by a bootstrap argument that, 
for each $\eps>0$, the solutions $c_i$ 
enjoy the uniform higher regularity estimate:
\be\label{estimciHolder}
\sup_i \|c_i\|_{C^{2+\alpha,1+(\alpha/2)}(\overline\Omega\times[\eps,\tilde T_i))}\le C(M,\eps).
\ee
To this end, setting $\tilde p=np/(n-1)$ (or any finite $\tilde p$ if $n=1$)
and using the Sobolev inequality, \eqref{estimMiTb0}  and $\gamma\ge 1/p$
guarantees that, for any $\eps>0$,
\be\label{estimciHolderbasic}
\|c_i(t)\|_{\tilde p}\le C\|c_i(t)\|_{1/p,p}\le C(M,\eps) \ \ \hbox{for all $t\in[\eps,\tilde T_i)$}.
\ee
On the other hand, by the argument leading to \eqref{estimMiTb0}, applied with $\tilde p$ instead of $p$, with $t_0$ as initial time
and $c_i(t_0)$ as initial data, it follows that 
$(t-t_0)^{1/2\tilde p}\|c_i(t)\|_{1/\tilde p,\tilde p}\le \tilde C_0\|c_i(t_0)\|_{\tilde p}$
for all $t, t_0$ such that $\eps\le t_0<t<\tilde T_i$
and $t-t_0\le s_0:=[1+(2\tilde C_0)^{m+1}\|c_i(t_0)\|_{\tilde p}^m]^{-\frac{2}{1-\gamma a}}$,
with $\tilde C_0=\tilde C_0(p,m,\Omega,\gamma)>0$.
By \eqref{estimciHolderbasic} we have $s_0\ge \bar s_0(\eps,M)>0$. For any $t\in [2\eps,\tilde T_i)$,
choosing $t_0=t-\min(\eps,\bar s_0(\eps,M))\ge \eps$, 
we deduce that
$\|c_i(t)\|_{1/\tilde p,\tilde p}\le C(M,\eps)\min(\eps,\bar s_0(\eps,M))^{-1/2\tilde p}=\tilde C(M,\eps)$.
Repeating iteratively the argument, we obtain, for any finite $q\in[p,\infty)$,
$$\|c_i(t)\|_{1/q,q}\le C(M,\eps,q) \ \ \hbox{for all $t\in[\eps,\tilde T_i)$.}$$ 
Property \eqref{estimciHolder}
is then a consequence of Lemma~\ref{lem:Holdersmoothin}.
It follows in particular from \eqref{estimciHolder} that $T_i>\tilde T_i$, i.e. $T_i>T_0$,
where $T_0$ is defined by~\eqref{defT0cases}.
We may then pass to the limit to show that (some subsequence of) $c_i$ converges
in $C^{2,1}_{loc}(\overline\Omega\times(0,T_0))$
to a classical solution of the PDE and of the boundary conditions in \eqref{pbmP} on $(0,T_0)$.
Owing to \eqref{estimMiTb0}, $c$ moreover satisfies
\be\label{estimMTb0}
\sup_{t\in(0,T_0)} t^{\gamma/2}\|c(t)\|_{\gamma,p}\le C\|c_0\|_p, 
\ee
and
\be\label{estimciHolderC}
\|c\|_{C^{2+\alpha,1+(\alpha/2)}(\overline\Omega\times[\eps,T_0])}\le C(M,\eps),
\quad 0<\eps<T_0.
\ee

 Now, if $\Omega$ is a cylinder \eqref{defCyl}, by the above arguments and \eqref{regC2alphaB} in 
Lemma~\ref{lem:Holdersmoothin}, we obtain
the convergence, in the space $C^{2,1}_{loc}((\Omega\cup\Gamma)\times(0,T_0))
\cap C^{1,0}_{loc}(\overline\Omega\times(0,T_0))$, of a subsequence $c_i$ to a classical solution $c\in E_{T_0}$.
As for estimate \eqref{estimciHolderC}, it becomes
\be\label{estimciHolderCcyl}
\|c\|_{C^{2+\alpha,1+\alpha/2}(\Sigma\times[\eps,T])}\le C(M,\eps,\Sigma),
\qquad \|c\|_{C^{1+\alpha,\alpha/2}(\overline\Omega\times[\eps,T])}\le C(M,\eps),
\ee
for any compact subset $\Sigma\subset \Omega\cup\Gamma$.

{\bf Step 4.} {\it Critical case $p=m\ge1$.}
In this case we need to treat the quantities $M_{i,T}$ and $N_{i,T}$ separately and make use of
Lemma~\ref{lem:compactsmoothin}. Recalling \eqref{DeftildeK},
by Lemma~\ref{lem:compactsmoothin}, we have
\be\label{estimMiTcrit0}
\delta(T)=\delta_{1,p,\tilde{\mathcal{K}}}(T)
:=\sup_{\phi\in\tilde{\mathcal{K}},\, t\in(0,T)} t^{1/2}\|S(t)\phi\|_{1,p}
\to 0,\quad\hbox{as } T\to 0.
\ee
We claim that there exists $T_0=T_0(\mathcal{K})\in(0,1)$ such that
\be\label{estimMiTcrit2}
M_{i,T}\le 2C_0\|c_0\|_p
\quad\hbox{and}\quad
N_{i,T}\le 2 \delta(T),\quad\hbox{for all $T<\min(T_i,T_0)$.}
\ee
 
 By \eqref{estimMiTpart1}, \eqref{estimMiTpart2}, with $\gamma=a=1$ and $\theta=\frac{1}{2p}$, we have
\be\label{estimMiTcrit}
M_{i,T}\le C_0\|c_0\|_p+C_0M_{i,T}^{p} N_{i,T}
\quad\hbox{and}\quad
N_{i,T}\le \delta(T)+C_0M_{i,T}^{p-\theta}N_{i,T}^{1+\theta},\quad 0<T<T_i.
\ee
By \eqref{estimMiTcrit0} 
there exists $T_0=T_0(\mathcal{K})\in(0,1)$ such that
$$\delta(T_0)\le \eta:=\min\bigl\{(2^{p+2}C_0^p\|c_0\|_p^{p-1})^{-1},
\bigl(2^{p+2}C_0^{p+1-\theta}\|c_0\|_p^{p-\theta}\bigr)^{-1/\theta}\bigr\}.$$
Let $\tilde T_i:=\min(T_i,T_0)$ and 
$T'_i=\sup\big\{T\in (0,\tilde T_i);\ 
M_{i,T}\le 2C_0\|c_0\|_p\ \hbox{and}\ N_{i,T}\le 2\delta(T_0)\big\}>0$. 
By the first part of \eqref{estimMiTcrit}, we have
$$M_{i,T}\le C_0\|c_0\|_p+2C_0\delta(T_0)(2C_0\|c_0\|_p)^p 
\le \ts\frac32 C_0\|c_0\|_p,\quad 0<T<T'_i.$$
Next, by the second part of \eqref{estimMiTcrit}, we deduce that
$$N_{i,T}\le \delta(T)+C_0(2C_0\|c_0\|_p)^{p-\theta}(2\delta(T_0))^\theta N_{i,T}
\le  \delta(T)+\ts\frac14 N_{i,T},\quad 0<T<T'_i,$$ 
hence $N_{i,T}\le \frac43 \delta(T)\le \frac43 \delta(T_0)$ for all $T<T'_i$.
Consequently, $T'_i=\tilde T_i$
since otherwise, by continuity, $M_{i,T}=2C_0\|c_0\|_p$ or $N_{i,T}=2\delta(T_0)$
for $T=T'_i<\tilde T_i$, a contradiction.
Claim \eqref{estimMiTcrit2} follows.

Next, again first considering the consider when $\Omega$ is smooth, 
we establish the uniform higher regularity 
\be\label{estimciHolderCrit}
\sup_i \|c_i\|_{C^{2+\alpha,1+(\alpha/2)}(\overline\Omega\times[\eps,\tilde T_i))}<\infty, \quad \eps>0.
\ee
To this end, a little more care is needed than in the supercritical case, because the existence time just obtained 
is uniform on compact subsets of initial data (but not on merely bounded subsets).
Thus starting from \eqref{estimMiTcrit2} and using the compactness of the imbedding
$W^{1,p}(\Omega)\subset L^{\hat p}(\Omega)$ where $\hat p=2np/(2n-p)$
(or any finite $\hat p$ if $p\ge 2n$), we see that, for each $\eps>0$,
$$\hat{\mathcal{K}}_\eps:=\overline{\{c_i(t),\  t\in[\eps,T_i),\, i\in\N\}} {}^{L^{\hat p}}$$
is a compact subset of $L^{\hat p}(\Omega)$.
By the argument leading to \eqref{estimMiTcrit2}, applied with $\hat p$ instead of $p$, with any $t_0\in[\eps,\tilde T_i)$ as initial time
and initial data $c_i(t_0)\in \hat{\mathcal{K}}_\eps$,  there exists $\hat T_0=\hat T_0(\hat{\mathcal{K}}_\eps)\in(0,1)$ such that,
$$\sup\,\Bigl\{s^{1/2}\|c_i(t_0+s)\|_{1,\hat p};\ t_0\ge \eps,\, 0<s<\min(\hat T_0,\tilde T_i-t_0),\, i\in \N\Bigr\}<\infty,$$
hence
$$\sup\,\Bigl\{\|c_i(t)\|_{1,\hat p};\ 2\eps\le t<\tilde T_i,\, i\in \N\Bigr\}<\infty.$$
It follows that $\|c_i(t)\|_{1,\hat p}\le C(K,\eps)$ for all $t\in[\eps,\tilde T_i)$.
Repeating iteratively the argument, we obtain, for any finite $q\in[p,\infty)$,
$$\sup\,\Bigl\{\|c_i(t)\|_{1,q};\ \eps\le t<\tilde T_i,\, i\in \N\Bigr\}<\infty.$$ 
Property \eqref{estimciHolderCrit} is then a consequence of Lemma~\ref{lem:Holdersmoothin}.
It follows in particular from \eqref{estimciHolderCrit} that $T_i>\tilde T_i$, i.e. $T_i>T_0$.
We may then pass to the limit to show that (some subsequence of) $c_i$ converges
in $C^{2,1}_{loc}(\overline\Omega\times(0,T_0))$
to a classical solution of the PDE and of the boundary conditions in \eqref{pbmP} on $(0,T_0)$.
Owing to \eqref{estimMiTcrit2}, it moreover satisfies
\be\label{estimMTb0crit}
\sup_{t\in(0,T)} t^{1/2}\|c(t)\|_{1,p}\le 2 \delta(T),\quad 0<T<T_0,
\quad\hbox{where } T_0=T_0(\mathcal{K})>0.
\ee
Now, if $\Omega$ is a cylinder \eqref{defCyl}, 
the above remains true with the same changes as in the last paragraph of Step~3.

{\bf Step 5.} {\it Continuity in $L^p(\Omega)$ at $t=0$.} 
From \eqref{estimMiT1} with $k=0$
and the fact that $T_i>T_0$, we obtain
\be\label{estimcontMN}
\|c_i(t)-S(t)c_{0,i}\|_p\le C_0 L_{0,i,T}T^{\frac12(1-{\blue \gamma a})} 
\quad \quad 0<t<T<T_0.
\ee

In the case $p>m$ 
(hence $\gamma a<1$ by \eqref{casesgamma2}), this along with \eqref{estimMiTb0} yields
$$\|c_i(t)-S(t)c_{0,i}\|_p\le C\|c_0\|_p^{m+1}t^{\frac12(1-{\blue \gamma a})},\quad i\in\N,\ 0<t<T_0.$$
Letting $i\to\infty$, we get
$$\|c(t)-S(t)c_0\|_p\le C\|c_0\|_p^{m+1}t^{\frac12(1-{\blue \gamma a})},\quad 0<t<T_0,$$
and since $\|c_0-S(t)c_0\|_p\to 0$ as $t\to 0$, the property follows.

In the case $p=m$,  
\eqref{estimcontMN} along with \eqref{estimMiTcrit2}, 
yields
$$\|c_i(t)-S(t)c_{0,i}\|_p\le C\|c_0\|_p^p\delta(t),\quad i\in\N,\ 0<t<T_0$$
and we conclude similarly by using \eqref{estimMiTcrit0}.
\qed

	We note that, at this stage (cf.~Steps 3 and 4), we have in particular obtained the following:
			
\begin{lemm}\label{lem:c0i} 
Under the assumptions of Theorem~\ref{thm1} or Proposition~\ref{thm2prop}(i),
there exist $T_0\in(0,T^*)$, a sequence 
			$c_{0,i}\in C^\infty_0(\Omega)$ and solutions $c, c_i$ of \eqref{pbmP} with initial data $c_0, c_{0,i}$,
			respectively, such that $c_i\in E_{T_0}\cap C(\overline Q_{T_0})$ and $c_i$ converges to $c$
in $C^{2,1}_{loc}((\Omega\cup\Gamma)\times(0,T_0))\cap C^{1,0}_{loc}(\overline\Omega\times(0,T_0))$.
If $p>m$, then we may take, for any $\eps>0$
$$T_0:=[1+C_\eps\|c_0\|_p^m]^{-\frac{2p}{p-m}-\eps}.$$
If $c_0\ge 0$, we may assume that the $c_{0,i}$ are nonnegative.

\end{lemm}

\subsection{Proof of Theorem~\ref{thm1} and Proposition~\ref{thm2prop}: uniqueness} 

We shall more precisely prove the following proposition,
which immediately implies the uniqueness part of Theorem~\ref{thm1} and Proposition~\ref{thm2prop}.

\begin{prop}\label{PropUniqLp}
Assume either $m\ge 1$, $f(s)=c^m$, and set $p=m$, or $m\in(0,1)$, $f$ as in Proposition~\ref{thm2prop}(i), and set $p=1$. 
Let $c_0\in L^p(\Omega)$, $T>0$ 
and $\tilde c, \hat c\in C([0,T];L^p(\Omega))\cap E_T$ 
be solutions of \eqref{pbmP}. Then $\tilde c\equiv \hat c$ on $[0,T]$.
\end{prop}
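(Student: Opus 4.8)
The plan is to reduce the statement to the integral representation \eqref{NewRepres} and then to control the difference $w=\tilde c-\hat c$ in the two scale–adapted quantities that already govern the existence proof, namely
\[
\Phi(T)=\sup_{0<t<T}\|w(t)\|_p,\qquad \Psi(T)=\sup_{0<t<T}t^{1/2p}\|w(t)\|_{1/p,p},
\]
closing the resulting estimate on a short interval $(0,T_1)$ and then propagating the equality to all of $[0,T]$ by a connectedness argument. Both solutions share the initial datum $c_0$ (continuity into $L^p$ at $t=0$), so the linear parts $S(t)c_0$ will cancel upon subtraction. I will carry out the critical case $p=m\ge 1$; the Lipschitz case $m\in(0,1)$, $p=1$ is handled in the same way.

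First I would \emph{not} assume \eqref{regulRepres} but derive it, with the crucial feature that the smoothing bound tends to $0$ with the length of the interval. For $\tau\in(0,T)$ the shifted function $c(\tau+\cdot)$ is again a classical solution and satisfies the Duhamel identity \eqref{NewRepresTau}, which holds for any classical solution, independently of any smoothing hypothesis. Running the bootstrap of Steps~3--4 of the proof of Theorem~\ref{thm1} on \eqref{NewRepresTau} with $c(\tau)$ as initial datum, and observing that $\{c(\tau):0\le\tau\le T/2\}$ is a \emph{compact} subset of $L^p(\Omega)$ (being the continuous image of $[0,T/2]$ under $c\in C([0,T];L^p)$, with $c(\tau)\to c_0$), Lemma~\ref{lem:compactsmoothin} furnishes a $T_1>0$ and a modulus $B(T)\to0$ as $T\to0$, uniform in $\tau$, with $\sup_{0<s<T_1}s^{1/2p}\|c(\tau+s)\|_{1/p,p}\le B(T_1)$. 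Letting $\tau\to0$ gives the bound for $c$ itself, so Lemma~\ref{lem:repres} applies and yields \eqref{NewRepres} on $(0,T_1)$ for both $\tilde c$ and $\hat c$.

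Subtracting the two copies of \eqref{NewRepres}, the terms $S(t)c_0$ cancel and
\[
w(t)=\int_0^t K_\nabla(t-s)\bigl[\tilde A(s)\,w(s)+\bigl(\tilde A(s)-\hat A(s)\bigr)\hat c(s)\bigr]\diff\! s,\qquad 0<t<T_1 .
\]
Using \eqref{smooth1b}--\eqref{smooth2}, the trace embedding \eqref{traceimb}, the bound $|\tilde A(s)|\le C B(T)^m s^{-m/2p}$ as in \eqref{controlAB}, and the Lipschitz-type bound (coming from $|\sigma^m-\varrho^m|\le C(\sigma^{m-1}+\varrho^{m-1})|\sigma-\varrho|$ together with Hölder and the trace) $|\tilde A(s)-\hat A(s)|\le C\bigl(\|\tilde c(s)\|_{1/p,p}^{m-1}+\|\hat c(s)\|_{1/p,p}^{m-1}\bigr)\|w(s)\|_{1/p,p}\le C B(T)^{m-1}\Psi(T)\,s^{-m/2p}$, the same Beta--integral computations as in the existence proof give, in the critical case $p=m$,
\[
\Phi(T)+\Psi(T)\le C\bigl(B(T)^m+B(T)^{m-1}\textstyle\sup_{0<s<T}\|\hat c(s)\|_p\bigr)\bigl(\Phi(T)+\Psi(T)\bigr),\qquad T<T_1 .
\]
For $m>1$ both $B(T)^m$ and $B(T)^{m-1}$ tend to $0$, so the prefactor is $<\tfrac12$ for $T$ small and $\Phi\equiv\Psi\equiv0$ follows at once.

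The hard part is the genuinely critical case $m=p=1$ (and, more generally, every case with $p=1$), where $B(T)^{m-1}=1$, the prefactor contains the \emph{non}-small quantity $\sup_{s}\|\hat c(s)\|_1$, and the relevant time integral carries no power of $T$, so a naive contraction fails; moreover $p=1$ forbids taking $k=1$ in \eqref{smooth2} because of divergence. Here I would break the coupling by estimating $\Psi$ more sharply, exactly as in Step~2 of the existence proof: using \eqref{smooth2} with $\ell=\tfrac12$, $k=q=1$ and the interpolation $\|\cdot\|_{1/2,1}\le\|\cdot\|_{1,1}^{1/2}\|\cdot\|_1^{1/2}$, one obtains a nonlinear inequality of the form $\Psi(T)\le C\bigl(B(T)\Psi(T)\bigr)^{1/2}\bigl(B(T)\Phi(T)+D\,\Psi(T)\bigr)^{1/2}$ with $D=\sup_s\|\hat c(s)\|_1$; since $C B(T)D<\tfrac12$ for $T$ small, squaring yields $\Psi\le C B(T)^2\Phi$, and feeding this into $\Phi\le C\bigl(B(T)\Phi+D\Psi\bigr)$ gives $\Phi\le C\bigl(B(T)+B(T)^2D\bigr)\Phi$, whence $\Phi\equiv\Psi\equiv0$ on $(0,T_1)$. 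Finally I would propagate uniqueness to all of $[0,T]$ by connectedness: the set $\Theta=\{t_0\in[0,T]:\tilde c\equiv\hat c\ \text{on}\ [0,t_0]\}$ is nonempty (it contains $[0,T_1]$) and closed; at any $t_0\in\Theta$ with $t_0>0$ the common value $c(t_0)$ is smooth, so restarting the problem at $t_0$ and invoking the local uniqueness just proved (applicable to smooth data, for which the smoothing bound is automatic) shows $\Theta$ is open, hence $\Theta=[0,T]$.
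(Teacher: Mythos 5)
Your proposal is correct, and it follows the same global scheme as the paper (time shift, compactness of the $L^p$-trajectory, Lemma~\ref{lem:compactsmoothin}, contraction in the two scale-adapted norms, then continuation), but it reorganizes the key step in a genuinely different and somewhat leaner way. The paper never runs the existence bootstrap on the given solution itself: it first proves a standalone local-uniqueness lemma (Lemma~\ref{lem:stat_sol}, uniqueness among solutions with small smoothing norms), and then transfers the uniform smoothing bound to $\tilde c$ indirectly, by constructing via Theorem~\ref{thm1} and property \eqref{compactexist} a solution $c_\tau$ with data $\tilde c(\tau)$ enjoying \eqref{Ntctau1}, identifying $c_\tau\equiv\tilde c(\tau+\cdot)$ through Lemma~\ref{lem:stat_sol} (the shifted solution qualifies because it is smooth up to its initial time, so its norms are small on short intervals), and letting $\tau\to0$; Lemma~\ref{lem:stat_sol} is then applied a second time to conclude. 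You instead rerun the a~priori estimates of Steps~2--4 of the existence proof directly on $c(\tau+\cdot)$, which is legitimate: \eqref{NewRepresTau} holds for any classical solution independently of smoothing hypotheses, smoothness up to the shifted initial time makes $M_T$, $N_T$ finite with the correct limits as $T\to0$ so the continuity argument closes, and Lemma~\ref{lem:compactsmoothin} applied to the compact set $\{c(\tau):0\le\tau\le T/2\}$ makes the resulting modulus uniform in $\tau$. This buys a proof with no auxiliary constructed solution and only one invocation of the contraction estimate; the cost is repeating the bootstrap computations rather than quoting their output. Your contraction estimates themselves, including the decisive treatment of $p=1$ (where the coefficient $\sup_s\|\hat c(s)\|_1$ is not small, \eqref{smooth2} with $k=q=1$ forces the interpolation $\|\cdot\|_{1/2,1}\le\|\cdot\|_1^{1/2}\|\cdot\|_{1,1}^{1/2}$, and one decouples via $\Psi\le CB^2(T)\Phi$), coincide with Cases~1 and~2 of the paper's Lemma~\ref{lem:stat_sol}, and your closing connectedness argument matches the paper's continuation step. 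Two cosmetic points for a final write-up: the absorption and squaring manipulations presuppose $\Phi(T),\Psi(T)<\infty$, which you should note follows from the smoothing bound applied to each solution separately (e.g.\ $\Psi(T)\le 2B(T)$); and the smallness condition you quote as $CB(T)D<\tfrac12$ should read $CB(T)^{1/2}D^{1/2}<\tfrac12$ (equivalently $C^2B(T)D<\tfrac12$ after squaring) --- harmless, since $B(T)\to0$.
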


In view of the proof, for $p\ge 1$ and $T>0$, we define
$$X_{p,T}=\Bigl\{z\in C([0,T];L^p(\Omega))\cap C(0,T;W^{1,p}(\Omega));\ 
\sup_{t\in(0,T)} t^{1/2}\|z(t)\|_{1,p}<\infty\Bigr\}$$
and, for $z\in X_{p,T}$, we set 
$$M_T(z)= \sup_{t\in(0,T)} \|z(t)\|_p,
\qquad
N_T(z)= \sup_{t\in(0,T)} t^{1/2}\|z(t)\|_{1,p}.$$
We start with the following lemma.

\begin{lemm} \label{lem:stat_sol}
Let $f, m, p$ be as in Proposition~\ref{PropUniqLp}. Let $T\in(0,1)$, $c_0\in L^p(\Omega)$ and $c_1,c_2\in X_{p,T}\cap E_T$ 
be classical solutions of \eqref{pbmP} on $(0,T)$.
There exists $\eps_0(p,f,\Omega,\|c_0\|_p)>0$ such that, if
\be\label{Hypuniq1}
\bar N_T:=\max(N_T(c_1),N_T(c_2)) \le \eps_0,
\ee
then $c_1=c_2$ on $(0,T)$.
\end{lemm}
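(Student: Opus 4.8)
The plan is to run a contraction argument on the difference $w=c_1-c_2$ in the norm governing $X_{p,T}$, using the Duhamel representation of Lemma~\ref{lem:repres}. Since $c_1,c_2\in X_{p,T}\cap C^{2,1}(\overline\Omega\times(0,T))$ and $\max(m,1)=p$ in both cases, each solution satisfies the hypotheses of Lemma~\ref{lem:repres}, hence obeys \eqref{NewRepres} with its own coefficient $A_i(s)=\int_{\partial\Omega}f(c_i(\cdot,s))\nu\,d\sigma$. Subtracting the two identities and writing $A_1c_1-A_2c_2=A_1w+(A_1-A_2)c_2$, I would obtain the linear integral identity $w(t)=\int_0^t K_\nabla(t-s)\bigl[A_1(s)w(s)+(A_1(s)-A_2(s))c_2(s)\bigr]\,ds$, which is genuinely homogeneous in $w$ because $A_1-A_2$ is itself controlled by $w$ on the boundary.

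First I would record an a priori bound on the data: inserting \eqref{NewRepres} into \eqref{smooth1b} and using the trace bound \eqref{controlAB} for $|A_i|$ together with $\bar N_T\le\eps_0$ small, one gets $M_T(c_i)\le 2\|c_0\|_p$; this is exactly what lets $\eps_0$ depend only on $\|c_0\|_p$ (and $p,f,\Omega$) rather than on the solutions. Next comes the core estimate of $M_T(w)$ and $N_T(w)$. For the increment of the coefficient I would use the trace embedding \eqref{traceimb}: in case (ii) with $m\ge1$ the elementary bound $|f(a)-f(b)|\le C(|a|^{m-1}+|b|^{m-1})|a-b|$ and H\"older's inequality give $|A_1(s)-A_2(s)|\le C\bar N_T^{\,m-1}N_T(w)\,s^{-1/2}$, while in case (i) the global Lipschitz bound on $f$ gives $|A_1(s)-A_2(s)|\le C N_T(w)\,s^{-1/2}$. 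Combining these with \eqref{smooth1b} for $M_T(w)$ and with \eqref{smooth2} (taking $\ell=0$, $k=1/p$) for $N_T(w)$, the resulting Beta-function time integrals converge and yield a coupled system of the shape $M_T(w)\le\alpha M_T(w)+\beta N_T(w)$ and $N_T(w)\le\gamma\bigl(M_T(w)+N_T(w)\bigr)$.

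The structural point that closes the argument is that the coefficient $\gamma$ always carries a positive power of $\bar N_T$: it originates from the ``Term I'' contribution $A_1w$, whose prefactor $|A_1|$ is bounded by $\bar N_T^{\,m}$ times a singular power of $s$. The possibly large factor $\beta\lesssim M_T(c_2)\lesssim\|c_0\|_p$ appears only in front of $N_T(w)$ in the $M_T$-estimate. Hence, for $\gamma<\tfrac12$ the $N_T$-inequality gives $N_T(w)\le 2\gamma M_T(w)$, and substituting into the first inequality yields $M_T(w)\le(\alpha+2\beta\gamma)M_T(w)$. Since $\alpha$ is small and $\beta\gamma\lesssim\|c_0\|_p\,\bar N_T^{\,\text{(positive power)}}$ is small once $\eps_0$ is chosen small enough depending on $\|c_0\|_p$, the factor is $<1$; as $M_T(w)<\infty$, this forces $M_T(w)=N_T(w)=0$, whence $w\equiv0$ on $(0,T)$ and, by continuity, on $[0,T]$.

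The hard part will be the critical endpoint $p=1$ (which occurs precisely when $m=1$ in case (ii), and throughout case (i)): there the exponent $-(1+1/p)/2$ in \eqref{smooth2} degenerates to $-1$, so the time integral defining $N_T(w)$ diverges. I would circumvent this exactly as in Step~2 of the existence proof, invoking \eqref{smooth2} with $\ell=\tfrac12$, $k=q=1$ (integrable singularity $(t-s)^{-3/4}$) after the interpolation inequality $\|z\|_{1/2,1}\le\|z\|_{1,1}^{1/2}\|z\|_1^{1/2}$. Applied to the factor $c_2$, this interpolation splits its norm into a Sobolev part bounded by $\bar N_T$ and an $L^1$ part bounded by $M_T(c_2)$, thereby producing the extra factor $\bar N_T^{\,1/2}$ that renders even this cross term small; applied to $w$, it produces $M_T(w)^{1/2}N_T(w)^{1/2}$, which I would absorb by Young's inequality into $\tfrac12\bigl(M_T(w)+N_T(w)\bigr)$. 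With this single modification the coupled-system mechanism of the previous paragraph goes through unchanged.
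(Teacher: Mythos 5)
Your proposal is correct and follows essentially the same route as the paper's proof: the same Duhamel representation with the splitting $A_1w+(A_1-A_2)c_2$, the same a priori absorption giving $M_T(c_j)\le 2\|c_0\|_p$, the same trace/H\"older control of $|A_1-A_2|$ (yielding the factor $\bar N_T^{\,m-1}$ when $p=m>1$), and the same endpoint fix for $p=1$ via \eqref{smooth2} with $\ell=1/2$, $k=q=1$ together with the interpolation $\|z\|_{1/2,1}\le\|z\|_1^{1/2}\|z\|_{1,1}^{1/2}$, which is exactly how the paper extracts the crucial small factor $\bar N_T^{1/2}$ from the cross term. The only cosmetic difference is how the $2\times 2$ system in $(M_T(w),N_T(w))$ is closed — you use Young's inequality and substitute the $N_T$-bound into the $M_T$-bound, whereas the paper substitutes its bound $M_T(w)\le C\|c_0\|_1N_T(w)$ into the $N_T$-estimate — which changes nothing of substance.
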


\begin{proof}
Set $\bar M_T:=\max(M_T(c_1),M_T(c_2))$
and $A_j(t)=\int_{\partial\Omega}f(c_j(\cdot,t))\nu \dsigma$
for $j\in\{1,2\}$.
Note that our assumptions guarantee that
$|f(c_1)-f(c_2)|\le C(|c_1|^{p-1}+|c_2|^{p-1})|c_1-c_2|$,
and that \eqref{traceimb} implies
$$\|c\|^p_{L^p(\partial\Omega)} =\||c|^p\|_{L^1(\partial\Omega)} \le C\bigl(\||c|^p\|_1+\|\nabla|c|^p\|_1\bigr)
\le C\|c\|_p^{p-1}\|c\|_{1,p}.$$
Using H\"older's inequality, it follows (omitting the variable $s$ for conciseness) that
\be\label{A1A2a}
\begin{aligned}
|A_1-A_2| 
&\le C\bigl(\|c_1\|_{L^p(\partial\Omega)}^{p-1}+\|c_2\|_{L^p(\partial\Omega)}^{p-1})\|c_1-c_2\|_{L^p(\partial\Omega)} \\
&\le C\bigl(\|c_1\|_p^{p-1}\|c_1\|_{1,p}+\|c_2\|_p^{p-1}\|c_2\|_{1,p}\bigr)^{\frac{p-1}{p}}
\bigl(\|c_1-c_2\|_p^{p-1}\|c_1-c_2\|_{1,p}\bigr)^{\frac{1}{p}}.
\end{aligned}
\ee
Setting $\mathcal{M}_T=M_T(c_1-c_2)$, $\mathcal{N}_T=N_T(c_1-c_2)$, we deduce that
\be\label{A1A2}
|A_1(s)-A_2(s)|
\le Cs^{-1/2}\bigl(\bar N_T\bar M_T^{p-1}\bigr)^{\frac{p-1}{p}}
\mathcal{N}_T^{\frac{1}{p}}\mathcal{M}_T^{\frac{p-1}{p}},
\quad
|A_j(s)|\le  Cs^{-1/2}\bar N_T \bar M_T ^{p-1}
\ee
(replacing $c_1$ or $c_2$ with $0$ in \eqref{A1A2a} for the latter).
Consequently, by \eqref{smooth2} and~\eqref{NewRepres}, 
$$\begin{aligned}
\|c_j(t)\|_p
&\le \|S(t)c_0\|_p+C\int_0^t (t-s)^{-1/2}|A_j(s)|\|c_j(s)\|_p \diff \! s
\le \|c_0\|_p+CI\bar N_T\bar M_T^p
\end{aligned}$$
where $I=\int_0^t (t-s)^{-1/2}s^{-1/2} \diff \! s=C$.
Therefore, $\bar M_T \le \|c_0\|_p+C\bar N_T\bar M_T^p$ 
and, by \eqref{Hypuniq1}
we deduce
\be\label{estimAiuniq2a}
\bar M_T\le 2\|c_0\|_p.
\ee

On the other hand, by \eqref{smooth2},
for $\ell\in[0,\frac{1}{p}]$ and $k\in[\ell,1]$, we have
\be\label{KdeltaDiff}
t^{\frac{k}{2}}\|[c_1-c_2](t)\|_{k,p}
\le Ct^{\frac{k}{2}}\int_0^t\ (t-s)^{-\frac{1+k-\ell}{2}}\phi_\ell(s)\diff \! s,
\quad \phi_\ell(s):=\bigl\|(A_2c_2-A_1c_1)(s)\bigr\|_{\ell,p}.
\ee
Writing
$$
\begin{aligned}
\phi_\ell(s) 
&=\bigl\|A_1(s)(c_1(s)-c_2(s))+(A_1(s)-A_2(s))c_2(s)\bigr \|_{\ell,p} \\
&\le |A_1(s)|\,\|c_1(s)-c_2(s)\|_{\ell,p}+|A_1(s)-A_2(s)| \,\|c_2(s)\|_{\ell,p}
\end{aligned}
$$
and using \eqref{A1A2} and the interpolation inequality
$$\|c\|_{\ell,p}\le \|c\|_{1,p}^\ell\|c\|_p^{1-\ell}
\le N_T^\ell(c)M_T^{1-\ell}(c) t^{-\ell/2}$$
 for $c=c_i$ and $c=c_1-c_2$,
it follows that
\be\label{KdeltaDiff2}
\phi_\ell(s) 
\le Cs^{-\frac{1+\ell}{2}}
\Bigl\{\bar N_T\bar M_T^{p-1}\mathcal{N}_T^\ell\mathcal{M}_T^{1-\ell}+\bigl(\bar N_T\bar M_T^{p-1}\bigr)^{\frac{p-1}{p}}
\,\bar N_T^\ell\bar M_T^{1-\ell}\mathcal{N}_T^{\frac{1}{p}}\mathcal{M}_T^{\frac{p-1}{p}}\Bigr\}.
\ee

Now, applying \eqref{KdeltaDiff}, \eqref{KdeltaDiff2} with $k=\ell=0$, we obtain
$$\|[c_1-c_2](t)\|_p
\le CI_1\Bigl\{\bar N_T\bar M_T^{p-1}\mathcal{M}_T+\bigl(\bar N_T\bar M_T^{p-1}\bigr)^{\frac{p-1}{p}}
\,\bar M_T\mathcal{N}_T^{\frac{1}{p}}\mathcal{M}_T^{\frac{p-1}{p}}\Bigr\},
$$
where $I_1=\int_0^t\ (t-s)^{-1/2} s^{-1/2} \diff \! s=C$,
hence
$$\mathcal{M}_T\le C
\Bigl\{\bar N_T\bar M_T^{p-1}\mathcal{M}_T+\bigl(\bar N_T\bar M_T^{p-1}\bigr)^{\frac{p-1}{p}}
\,\bar M_T\mathcal{N}_T^{\frac{1}{p}}\mathcal{M}_T^{\frac{p-1}{p}}\Bigr\}.
$$
Using assumption \eqref{Hypuniq1} and Young's inequality, this implies
\be\label{KdeltaDiff3}
\mathcal{M}_T\le 
K\mathcal{N}_T,\quad K=C
\bigl(\bar N_T\bar M_T^{p-1}\bigr)^{p-1}\bar M_T^p.
\ee
Then setting $\ell=1/2p$ and applying \eqref{KdeltaDiff}, \eqref{KdeltaDiff2} with $k=1$, we obtain
$$\begin{aligned}
t^{\frac12}\|[c_1-c_2](t)\|_{1,p}
&\le CI_2
\Bigl\{\bar N_T\bar M_T^{p-1}\mathcal{N}_T^\ell\mathcal{M}_T^{1-\ell}+\bigl(\bar N_T\bar M_T^{p-1}\bigr)^{\frac{p-1}{p}}\,\bar N_T^\ell\bar M_T^{1-\ell}\mathcal{N}_T^{\frac{1}{p}}\mathcal{M}_T^{\frac{p-1}{p}}\Bigr\},
\end{aligned}$$
where $I_2=t^{\frac12}\int_0^t (t-s)^{-1+\frac{1}{4p}} s^{-\frac12-\frac{1}{4p}}\diff \! s=C$.
Combining with \eqref{KdeltaDiff3}, we deduce
$$\begin{aligned}
\mathcal{N}_T
&\le C\Bigl\{\bar N_T\bar M_T^{p-1}K^{1-\ell}
+\bigl(\bar N_T\bar M_T^{p-1}\bigr)^{\frac{p-1}{p}}
\,\bar N_T^\ell\bar M_T^{1-\ell}K^{\frac{p-1}{p}}\Bigr\}\mathcal{N}_T.
\end{aligned}$$
Using assumption \eqref{Hypuniq1} again, it follows that $\mathcal{N}_T\le \mathcal{N}_T/2$, 
hence $c_1\equiv c_2$.
\end{proof}

\medskip

\begin{proof}[Proof of Proposition~\ref{PropUniqLp}]
We make use of an argument from \cite{Br,BC96}. 
We only consider the case $m=p\ge 1$, the case $m<p=1$ being easier.
First note that $\mathcal{K}:=\tilde c([0,T])\cup \hat c([0,T])$ is a compact subset of~$L^p(\Omega)$.
Set $M=\sup_{t\in[0,T]} \bigl(\|\tilde c(t)\|_p+\|\hat c(t)\|_p\bigr)$, and let $\delta(t)=\delta_{1,p,\mathcal{K}}(t)$ 
be defined by \eqref{compactsmoothing1}. 

Fix any $\tau\in(0,T)$. By the existence part of Theorem~\ref{thm1} and property \eqref{compactexist}, there exist
 $T_0>0$ independent of $\tau$ and a solution 
$c_\tau$ of \eqref{pbmP} on $(0,T_0)$ with initial data 
$\tilde c(\tau)$
and such that, for all $t\in(0,T_0)$,
\be\label{Ntctau1}
N_t(c_\tau)=\sup_{s\in(0,t)}s^{1/2}\|c_\tau(s)\|_{1,p}\le C\delta(t) \to 0,\quad t\to 0.
\ee
Set $\tilde c_\tau(t)=\tilde c(\tau+t)$ and
$$N_t(\tilde c_\tau)=\sup_{s\in(0,t)}s^{1/2}\|\tilde c_\tau(s)\|_{1,p}.$$
Since $\tilde c\in E_T$, 
we have $\lim_{t\to 0}N_t(\tilde c_\tau)=0$ (where the convergence need not be uniform in $\tau$).
It thus follows from Lemma \ref{lem:stat_sol} and \eqref{Ntctau1} 
that $c_\tau(t)=\tilde c_\tau(t)$ 
for $t>0$ small (depending on~$\tau$). 
We claim that 
\be\label{Ntctau3}
c_\tau(t)=\tilde c_\tau(t)\quad\hbox{for all $t\in(0,T_\tau)$, \ \ with $T_\tau:= \min(T_0,T-\tau)$.}
\ee
Indeed, otherwise, letting $t_0:=\sup \bigl\{s\in(0,T_\tau);\,c_\tau=\tilde c_\tau\hbox{ on $[0,s]$}\bigr\}$,
we would have $0<t_0<T_\tau$ and, since 
$c_\tau, \tilde c_\tau\in E_{T_{\tau}}$, 
the above argument would yield $c_\tau(t)=\tilde c_\tau(t)$ for $t-t_0>0$ small: a contradiction.

Now, from \eqref{Ntctau1} and \eqref{Ntctau3}, we have in particular, 
$$s^{1/2}\|\tilde c(\tau+s)\|_{1,p}\le C\delta(t),\quad 0<s<t<\min(T_0,T-\tau).$$
For any $t\in(0,\min(T_0,T))$, letting $\tau\to 0$, we obtain 
$$s^{1/2}\|\tilde c(s)\|_{1,p}\le C\delta(t),\quad 0<s<t<\min(T_0,T)$$
and this obviously remains true for $\hat c$.
Applying Lemma~\ref{lem:stat_sol} again, we deduce that $\tilde c=\hat c$ for $t>0$ small.
Then arguing as in the proof of \eqref{Ntctau3}, we conclude that $\tilde c=\hat c$ on $[0,T]$.
\end{proof}

\subsection{Proof of Theorem~\ref{thm1} and Proposition~\ref{thm2prop}(i): positivity and continuation}
  
In view of the positivity proof, we start with a maximum principle for smooth solutions.  

\begin{lemm} \label{lempos1}
Let $f$ be 
continuous on $[0,\infty)$, $c_0\in C(\overline\Omega)$, $T>0$, 
and let $c\in E_T 
\cap C(\overline Q_T)$ be a classical solution of \eqref{pbmP} on $[0,T]$. 

\begin{itemize}
\item[(i)] If $c_0\ge 0$, then $c\ge 0$.
	\smallskip
	
\item[(ii)]  If in addition $c_0\not\equiv 0$, then 
$c>0$ in $(\Omega\cup\Gamma)\times(0,T)$.
\end{itemize}
\end{lemm}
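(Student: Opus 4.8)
The plan is to exploit that, for a \emph{fixed} classical solution $c$, the convective field $A(t)=\int_{\partial\Omega}f(c(\cdot,t))\nu\,d\sigma$ depends on $t$ alone and is bounded and continuous on $[0,T]$ (because $c\in C(\overline Q_T)$ and $f$ is continuous). Writing $A$ for $A(t)$, the solution $c$ thus satisfies the \emph{linear} problem
\[
\partial_t c-\Delta c+A\cdot\nabla c=0\ \hbox{ in }\Omega\times(0,T],\qquad \partial_\nu c=(A\cdot\nu)\,c\ \hbox{ on }\Gamma\times(0,T],
\]
with bounded drift and a Robin boundary coefficient $A\cdot\nu$ of \emph{indefinite} sign. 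I would treat (i) by an energy argument and (ii) by pointwise comparison tools.

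For (i), I would test the equation against the negative part $c_-=\max(-c,0)$. After integrating by parts, using the boundary condition and the fact that $A$ is constant in $x$ (so that the drift term reduces to a pure boundary contribution), one is led to the identity
\[
\frac12\frac{d}{dt}\int_\Omega c_-^2\,dx=-\int_\Omega|\nabla c_-|^2\,dx+\frac12\int_{\partial\Omega}(A\cdot\nu)\,c_-^2\,d\sigma .
\]
The key step is to dominate the boundary term, which has no good sign: bounding $|A\cdot\nu|\le L:=\sup_{[0,T]}|A|$ and invoking the interpolated trace inequality $\|u\|_{L^2(\partial\Omega)}^2\le\eps\|\nabla u\|_{L^2(\Omega)}^2+C_\eps\|u\|_{L^2(\Omega)}^2$, I would choose $\eps$ small so that the boundary term is absorbed into $-\int_\Omega|\nabla c_-|^2\,dx$. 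This yields $y'(t)\le Cy(t)$ for $y(t)=\frac12\|c_-(t)\|_2^2$, with $y(0)=0$ since $c_0\ge 0$; Gronwall's lemma then gives $y\equiv0$, i.e.\ $c\ge0$. (Equivalently, one may set $c=\psi w$ with a positive weight $\psi=e^{h}$ chosen so that $\partial_\nu h\ge L$ on $\Gamma$, which renders the Robin coefficient for $w$ nonpositive; the weak maximum principle and Hopf's lemma then give $w\ge0$ directly.)

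For (ii), once $c\ge0$ is known I would use the strong maximum principle and Hopf's lemma. The interior equation has no zeroth-order term, so a nonnegative solution is either strictly positive or identically zero on each parabolic component. If $c$ vanished at an interior point $(x_1,t_1)$ with $t_1>0$, it would then vanish on $\overline\Omega\times[0,t_1]$, forcing $c_0\equiv0$ and contradicting $c_0\not\equiv0$; hence $c>0$ in $\Omega\times(0,T)$. At a putative boundary zero $(x_1,t_1)$ with $x_1\in\Gamma$, Hopf's lemma (available since $\Gamma$ is smooth and enjoys the interior sphere condition) forces $\partial_\nu c(x_1,t_1)<0$, whereas the boundary condition gives $\partial_\nu c=(A\cdot\nu)c=0$ there; this contradiction yields $c>0$ on $\Gamma\times(0,T)$.

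The step I expect to be the main obstacle is controlling the indefinite boundary term in (i): since $A\cdot\nu$ may be positive, the boundary contribution to the energy balance is genuinely destabilizing and must be reabsorbed through the trace inequality — this is also precisely why a direct Hopf argument cannot close (i). In (ii) the only delicate point is the restriction to the regular part $\Gamma$, dictated by the interior sphere condition needed for Hopf's lemma; positivity is not claimed on the singular part of $\partial\Omega$ (the edges in the cylinder case).
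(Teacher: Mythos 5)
Your proposal is correct and follows essentially the same route as the paper: part (i) is the same Stampacchia-type energy argument (multiplying by $-c_-$, reducing the drift term to a boundary contribution, absorbing the indefinite boundary term via the interpolated trace inequality, and concluding by Gronwall), and part (ii) is the identical combination of the strong maximum principle with Hopf's lemma at regular boundary points, where the no-flux condition forces $\partial_\nu c=0$ at a putative zero. Your derivation in (i) is in fact a slightly cleaner packaging of the paper's computation, merging the two boundary-type terms into the single term $\frac12\int_{\partial\Omega}(A\cdot\nu)c_-^2\,d\sigma$ before absorption, but the underlying estimate is the same.
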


\begin{proof}
(i) We use a Stampacchia type argument similarly as in, e.g.,  \cite[Proposition 52.8 and Remark 52.9]{QSb}.
Recalling the notation \eqref{NewRepres2}, we
note that $A(t)$ is bounded on $[0,T]$ owing to our assumptions.
Set $c_-=\max(-c,0)$. Multiplying the PDE in \eqref{pbmP} by $-c_-$, integrating by parts
and using the boundary conditions, we obtain, for a.e.~$t\in(0,T)$,
$$\begin{aligned}	
\frac12\frac{\diff}{\dt}\int_\Omega (c_-)^2(t)\dx
&=-\int_\Omega (\Delta c)c_-\dx +A(t)\cdot\int_\Omega c_-\nabla c\dx\\
&=-\int_\Omega |\nabla c_-|^2\dx+\int_\Gamma (A(t)\cdot\nu) (c_-)^2 \dsigma -A(t)\cdot\int_\Omega c_-\nabla c_-\dx\\
&\le -\int_\Omega |\nabla c_-|^2\dx +\frac12\int_\Omega |\nabla c_-|^2\dx
+C_1\Bigl(\int_\Gamma (c_-)^2 \dsigma+\int_\Omega c_-^2\dx\Bigr).
\end{aligned}	$$
We note that this calculation is allowed also when $\Omega$ is a cylinder \eqref{defCyl}
owing to \eqref{regulH1L2}.
Applying the trace inequality $\int_\Gamma v^2 d\sigma\le \eps\int_\Omega |\nabla v|^2\dx+C_2(\eps)\int_\Omega v^2\dx$
with $v=c_-$ and $\eps=(2C_1)^{-1}$, it follows that
$$	\frac12\frac{\diff}{\dt}\int_\Omega (c_-)^2(t)\dx
\le C_1(1+C_2(\eps))\int_\Omega c_-^2\dx.$$
Since $c_-(\cdot,0)=0$ by assumption, the assertion follows by integration.

(ii) Since $c\ge 0$, $c\not\equiv 0$ solves an equation of the form 
$c_t-\Delta c=b(t)\cdot\nabla c$ with $b\in C([0,T];\R^n)$,
we have $c>0$ in $\Omega\times(0,T]$ by the strong maximum principle.
On the other hand, we cannot have $c(x_0,t_0)=0$ with $t_0>0$ and $x_0\in \Gamma$ (a regular boundary point), 
because the boundary conditions then imply $\nu\cdot\nabla c(x_0,t_0)=0$,
 contradicting the Hopf lemma. 
 \end{proof}

\begin{proof}[Completion of proof of the positivity part of Theorem~\ref{thm1} and Proposition~\ref{thm2prop}(i)]
By Lemma \ref{lem:c0i}, there exist $T_0\in(0,T^*)$,
$c_{0,i}\in C^\infty_0(\Omega)$, $c_{0,i}\ge 0$,
 and solutions $c_i$ of \eqref{pbmP} with initial data $c_{0,i}$, 
 such that $c_i\in E_{T_0}\cap C(\overline Q_{T_0})$ and $c_i$ converges to $c$
in $C^{2,1}_{loc}((\Omega\cup\Gamma)\times(0,T_0))$.
It follows from Lemma~\ref{lempos1}(i) 
that, for each $i\in\N$, 
$c_i\ge 0$ in $\overline\Omega\times[0,T_0]$, 
hence $c\ge 0$ on $[0,T_0]$. Since $c$ is a classical solution on $[T_0,T^*)$, 
Lemma~\ref{lempos1}(i) guarantees that $c\ge 0$ on $[0,T^*)$.

Finally assume for contradiction that $u(x_0,t_0)=0$ for some $(x_0,t_0)\in(\Omega\cup\Gamma)\times(0,T^*)$.
Since $c$ is a classical solution on $(0,T^*)$, 
Lemma~\ref{lempos1}(ii) implies $c=0$ on $(0,t_0]$.
Since $c(t)\to c_0\not\equiv 0$ in $L^p$ as $t\to 0$, this is a contradiction.
\end{proof}

\begin{proof}[Completion of proof of the continuation part of Theorem~\ref{thm1} and Proposition~\ref{thm2prop}(i)]
For $p$ $>m\ge 1$ or $p=1>m$, 
by Lemma \ref{lem:c0i}, the maximal classical solution $c$ of \eqref{pbmP} exists on a time interval of length
$T_0:=[1+C_\eps\|c_0\|_p^m]^{-\frac{2p}{p-m}-\eps}$,
with any $\eps>0$. By a time shift, it follows that
$$T^*-t\ge[1+C_\eps\|c(t)\|_p^m]^{-\frac{2p}{p-m}-\eps},\quad 0<t<T^*.$$
If $T^*<\infty$, we thus have 
$$\|c(t)\|_p\ge C_\eps(T^*-t)^{-\frac{p-m}{2pm}+\eps},\quad t\to T^*,$$
which in particular implies the assertion.
\end{proof}

\begin{remark} \label{remsignf}
All the conclusions of Theorem~\ref{thm1} remain valid when $f(c)=-|c|^{m-1}c$ for $m\ge 1$,
as well as \eqref{regulH1L2} with $T=T^*$ if $\Omega$ is a cylinder~\eqref{defCyl}.
Moreover, property \eqref{estimciHolderC} with $p>m$ and $T_0$ given by \eqref{defT0cases} remains true.
Indeed the proofs in this section made no use of the sign of the nonlinear term.
\end{remark}

\section{Proof of Theorems~\ref{thm2}, \ref{thm2b} and \ref{thm1aneg}}\label{sec:proof:theorems}
\begin{proof}[Proof of Theorem~\ref{thm2}(i)] 
By Proposition~\ref{thm2prop}(i), problem \eqref{pbmP} admits a unique, maximal classical solution
	$c\in E_{T^*} 
	\cap C([0,T^*);L^1(\Omega))$.
	 Moreover, since $c_0\ge 0$, we have $c\ge 0$, 
	 hence $\|c(t)\|_1\equiv \|c_0\|_1$ by \eqref{Conserv}.
	 The last part of Proposition~\ref{thm2prop}(i) then guarantees that $T^*=\infty$
	 and property \eqref{estimciHolderC} ensures \eqref{boundeps}.
\end{proof}

We next compute the evolution of the entropy and of the $L^p$ norms,
for classical solutions of problems of the form \eqref{pbmPA}.

\begin{lemm} \label{lemFunctionals}
Let $T>0$ and $b\in C(0,T;\R^n)$.
Let $v\in E_T$, 
with also $v\in L^2_{loc}((0,T);H^2(\Omega))\cap H^1_{loc}((0,T);L^2(\Omega))$
if $\Omega$ is a cylinder \eqref{defCyl}.
Assume $v$ is a classical solution~of 
\begin{subequations}\label{pbmPA}
\begin{align}
	\hfill\partial_t v&=\nabla\cdot\bigl(\nabla v+b(t)v\bigr),
	 &&x\in\Omega,\ 0<t<T,\\
	\hfill 0&=\bigl(\nabla v+b(t)v\bigr)\cdot \nu,&&x\in\partial\Omega,\ 0<t<T.
\end{align}
\end{subequations}
\begin{itemize}
\item[(i)] Assume 
\be\label{Hypvpos}
v>0\ \hbox{ in $\Omega\times(0,T)$}
\ee
and set $\phi(t)=\int_\Omega v\log v\dx$. Then $\phi\in W^{1,1}_{loc}(0,T)$, 
$\int_\Omega  v^{-1}|\nabla v|^2\dx\in L^1_{loc}(0,T)$ and
$$\phi'(t)=-\int_\Omega  v^{-1}|\nabla v|^2\dx -b(t) \cdot \int_\Omega \nabla v\dx, \quad a.e. \ t\in (0,T).$$

\item[(ii)] Assume that either $p>1$ and \eqref{Hypvpos} holds, or that $p$ is an even integer. 
Then $\phi(t):=\int_\Omega v^p\dx\in H^1_{loc}(0,T)$,
$\int_\Omega  v^{p-2}|\nabla v|^2\dx\in L^\infty_{loc}(0,T)$ and
$$\phi'(t)=-p(p-1)\left\{\int_\Omega  v^{p-2}|\nabla v|^2\dx+b(t) \cdot \int_\Omega v^{p-1}\nabla v\dx\right\},\quad 
a.e.~t\in (0,T).$$

\end{itemize}
\end{lemm}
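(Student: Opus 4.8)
The plan is to differentiate the functional under the integral sign, use the divergence form of the equation, integrate by parts, and exploit the no-flux boundary condition $(\nabla v+b(t)v)\cdot\nu=0$ to cancel the resulting boundary term. Writing $g(s)=s\log s$ in part~(i) and $g(s)=s^p$ in part~(ii), the formal computation reads
\[
\begin{aligned}
\frac{\diff}{\dt}\int_\Omega g(v)\dx
&=\int_\Omega g'(v)\,\nabla\cdot\bigl(\nabla v+b(t)v\bigr)\dx\\
&=-\int_\Omega g''(v)\,\nabla v\cdot\bigl(\nabla v+b(t)v\bigr)\dx,
\end{aligned}
\]
the boundary integral produced by Green's formula vanishing thanks to the boundary condition. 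Since $g''(s)=1/s$ (resp. $g''(s)=p(p-1)s^{p-2}$) and $g''(v)v=1$ (resp. $p(p-1)v^{p-1}$), this is exactly the asserted identity, the cross term reducing to $b(t)\cdot\int_\Omega\nabla v\dx$ (resp. $p(p-1)\,b(t)\cdot\int_\Omega v^{p-1}\nabla v\dx$).

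I would first settle the regular cases, where $g$ is twice continuously differentiable up to $s=0$: this covers $p$ an even integer (in which case $g$ is a smooth polynomial on $\R$ and no positivity of $v$ is needed) and, more generally, $p\ge 2$ with $v\ge 0$. Here, since $v\in C^{2,1}(\overline\Omega\times(0,T))$ and $\Omega$ is bounded, differentiation under the integral, Green's formula, and continuity in $t$ of all integrands are immediate, yielding $\phi\in C^1(0,T)$ together with the stated formula and $\int_\Omega v^{p-2}|\nabla v|^2\dx\in L^\infty_{loc}(0,T)$, the integrand being continuous and bounded.

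The substantive work is to pass through the singularity of $g''$ at $s=0$ in the remaining cases. For $1<p<2$ (part~(ii), with $v>0$), I would approximate $g$ by $C^2$ convex functions $g_\delta$ obtained by shifting the argument, namely $g_\delta''(s)=g''(s+\delta)=p(p-1)(s+\delta)^{p-2}$, normalised so that $g_\delta\to g$ and $g_\delta'\to g'$ as $\delta\downarrow0$, with $g_\delta''\uparrow g''$ on $(0,\infty)$. Applying the regular computation to $g_\delta$ and letting $\delta\to0$, the cross term passes to the limit by dominated convergence since $g_\delta''(v)v=p(p-1)v(v+\delta)^{p-2}\le p(p-1)v^{p-1}$ is bounded. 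The finiteness and local boundedness of $\int_\Omega v^{p-2}|\nabla v|^2\dx$ I would establish independently from the behaviour of $v$ near its boundary zeros: since $p>1$, the model one-dimensional integral $\int_0^1 r^{p-2}\,\diff r$ converges (and the boundary condition forces $\partial_\nu v=0$ wherever $v=0$, so $v$ in fact vanishes at least to second order there, which only helps). This furnishes an integrable majorant $g''(v)|\nabla v|^2=p(p-1)v^{p-2}|\nabla v|^2$, so dominated convergence applies throughout, the formula holds pointwise with continuous right-hand side, and $\phi\in C^1$.

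For the entropy in part~(i) the singularity $g''(s)=1/s$ is stronger, which is why only the weaker conclusion is claimed; here I use $g_\delta''(s)=1/(s+\delta)$ (so $g_\delta''(v)v=v/(v+\delta)\le 1$), integrate the regularised identity over a compact subinterval $[t_1,t_2]$ of $(0,T)$, and pass to the limit to obtain
\[
\int_{t_1}^{t_2}\!\!\int_\Omega v^{-1}|\nabla v|^2\dx\,\dt
=\phi(t_1)-\phi(t_2)-\int_{t_1}^{t_2} b(t)\cdot\int_\Omega\nabla v\dx\,\dt,
\]
whose right-hand side is finite because $\int_\Omega v\log v\dx$ is continuous, hence bounded on $[t_1,t_2]$. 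This simultaneously gives $\int_\Omega v^{-1}|\nabla v|^2\dx\in L^1_{loc}(0,T)$ and, by the fundamental theorem of calculus applied to the now $L^1_{loc}$ right-hand side, $\phi\in W^{1,1}_{loc}(0,T)$ with the a.e. formula. The main obstacle throughout is precisely this limit passage: keeping the singular Dirichlet integral finite while controlling the indefinite-sign cross term near the zero set of $v$, and it is the differing strength of the singularity of $s\log s$ versus $s^p$ ($p>1$) that accounts for the $W^{1,1}/L^1$ conclusion in~(i) against the stronger $C^1/L^\infty$ conclusion in~(ii).
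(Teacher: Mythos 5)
Your part (i) is correct and is essentially the paper's own proof: the authors also regularize (they multiply the PDE by $\log(v+\eps)$, i.e.\ shift inside the nonlinearity, rather than shifting $g''$, but this is cosmetic), integrate over $[t_1,t_2]$, and pass to the limit using monotone convergence on the Dirichlet term and dominated convergence elsewhere, obtaining $\int_\Omega v^{-1}|\nabla v|^2\dx\in L^1_{loc}$ and the $W^{1,1}_{loc}$ statement exactly as you do.

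In part (ii), however, your plan has a genuine gap. You feed into dominated convergence an \emph{a priori} integrable majorant $v^{p-2}|\nabla v|^2$, and everything downstream (the limit passage in the cross term is fine, but the limit of the Dirichlet term, the continuity in $t$ of the right-hand side, and hence $\phi\in C^1$) hangs on that majorant being integrable and locally bounded in $t$. Your justification --- that the model integral $\int_0^1 r^{p-2}\dr$ converges and that second-order vanishing of $v$ at boundary zeros ``only helps'' --- is not a proof, and the intuition is even backwards: higher-order vanishing of $v$ makes the weight $v^{p-2}$ \emph{more} singular, and what saves integrability is the simultaneous decay of $|\nabla v|^2$. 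Quantifying that compensation means proving an inequality of the type $|\nabla v|^2\le C\,v$ on $\overline\Omega$, which is classical for nonnegative $C^{1,1}$ functions on all of $\R^n$ but fails in general up to a boundary (take $v=x_1$ on a half-space) and here requires a genuine argument exploiting the no-flux condition and the $C^{2,1}(\overline\Omega)$ regularity; a general $C^{2,1}$ function need not have the clean power-law behaviour your model cases assume. The paper avoids this entirely, and so can you with a small rearrangement of your own computation: work at \emph{fixed} $t$, writing the regularized identity as a relation between space integrals,
\begin{equation*}
(p-1)\int_\Omega (v+\delta)^{p-2}|\nabla v|^2 \dx
=-\int_\Omega v_t (v+\delta)^{p-1}\dx-(p-1)\,b(t)\cdot\int_\Omega v\,(v+\delta)^{p-2}\nabla v \dx ,
\end{equation*}
then let $\delta\downarrow 0$ using \emph{monotone} convergence on the left (the integrand increases since $p-2<0$) and dominated convergence on the right (majorants $\|v\|_\infty^{p-1}|v_t|$ and $v^{p-1}|\nabla v|$, both bounded). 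Finiteness, local boundedness and continuity in $t$ of $\int_\Omega v^{p-2}|\nabla v|^2\dx$ then come \emph{out} of the identity, since the right-hand side is continuous in $t$, instead of being fed into it; finally $\phi\in C^1$ with $\phi'(t)=p\int_\Omega v^{p-1}v_t\dx$ follows by direct differentiation under the integral (legitimate because $p>1$ makes $v^{p-1}$ continuous and bounded), and substitution gives the stated formula.
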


\begin{proof} (i) Fix $\eps>0$. 
For all $t\in (0,T)$, multiplying the PDE in \eqref{pbmPA} by $\log(v+\eps)$ and using the boundary conditions, we get
$$\begin{aligned}
\frac{d}{dt}\int_\Omega v\log (v+\eps)\dx
&=\int_\Omega v_t\Bigl(\log (v+\eps)+\frac{v}{v+\eps}\Bigr)\dx\\
&=\int_\Omega \Bigl(\log (v+\eps)+\frac{v}{v+\eps}\Bigr)\ \nabla\cdot\Bigl(\nabla v+b(t)v\Bigr)\dx\\
&=-\int_\Omega \frac{\nabla v}{v+\eps}\Bigl(1+\frac{\eps}{v+\eps}\Bigr)\cdot\Bigl(\nabla v+b(t)v\Bigr)\dx.
\end{aligned}$$
Integrating in time, we get, for all $0<t_1<t_2<T$,
$$
\begin{aligned}
\int_{t_1}^{t_2}
&\int_\Omega  \frac{|\nabla v|^2}{v+\eps} \Bigl(1+\frac{\eps}{v+\eps}\Bigr)\dx\dt \\
&=-\Bigl[ \int_\Omega v\log (v+\eps)\dx\Bigr]_{t_1}^{t_2}
- \int_{t_1}^{t_2} b(t) \cdot \int_\Omega \frac{v}{v+\eps}\Bigl(1+\frac{\eps}{v+\eps}\Bigr)\nabla v\dx\dt.
\end{aligned}
$$
We may then pass to the limit $\eps\to 0$ via monotone (resp., dominated) convergence in the LHS (resp., RHS).
It follows that
$$\int_{t_1}^{t_2}\int_\Omega  \frac{|\nabla v|^2}{v}\dx\dt =-\Bigl[ \int_\Omega v\log v\dx\Bigr]_{t_1}^{t_2}
- \int_{t_1}^{t_2} b(t) \cdot \int_\Omega \nabla v\dx\dt, \quad 0<t_1<t_2<T.$$
This in particular implies the finiteness of the LHS and readily yields the assertion.

(ii) We consider the case \eqref{Hypvpos} with $p>1$, 
the other being similar (and easier, $\eps=0$ being sufficient).
Fix $\eps>0$. For a.e.~$t\in (0,T)$, multiplying the PDE in \eqref{pbmPA} by $(v+\eps)^{p-1}$ and using the boundary conditions, we get
$$\begin{aligned}
\int_\Omega v_t(v+\eps)^{p-1}\dx
&=\int_\Omega (v+\eps)^{p-1}\ \nabla\cdot\Bigl(\nabla v+b(t)v\Bigr)\dx\\
&=-(p-1)\int_\Omega (v+\eps)^{p-2}\nabla v\cdot\Bigl(\nabla v+b(t)v\Bigr)\dx,
\end{aligned}$$
hence
$$(p-1)\int_\Omega  (v+\eps)^{p-2}|\nabla v|^2\dx 
=-\int_\Omega v_t(v+\eps)^{p-1}\dx-(p-1)b(t) \cdot \int_\Omega v(v+\eps)^{p-2}\nabla v\dx.$$
We may then pass to the limit $\eps\to 0$ via monotone (resp., dominated) convergence in the LHS (resp., RHS),
so that
$$(p-1)\int_\Omega  v^{p-2}|\nabla v|^2\dx 
=-\int_\Omega v_tv^{p-1}\dx-(p-1)b(t) \cdot \int_\Omega v^{p-1}\nabla v\dx.$$
This in particular guarantees that the LHS is finite for a.e.~$t\in(0,T)$ and locally bounded.
Since $\phi\in H^1_{loc}(0,T)$, owing to the regularity of $v$, and $\phi'(t)=p\int_\Omega v_tv^{p-1}\dx$,
the assertion follows.
\end{proof}

In view of the proof of Theorems~\ref{thm2}(ii) and \ref{thm2b}, 
by means of Lemma~\ref{lemFunctionals}, we establish the following a priori estimate.

\begin{prop}\label{PropAprioriLp}
	Assume that $f(c)= c^m$, with $m\ge 1$ and let $p>1$ satisfy $m\le p<2m$. 
	Let $c_0\in L^p(\Omega)$, $c_0\ge 0$, $c_0\not\equiv 0$.
\begin{itemize}
\item[(i)] We have
$$		\frac{\di }{\dt} \|c(t)\|_p^p
			\le \frac{4(p-1)}{p}
  \Bigl\{K_0\|c\|_p^m-1 \Bigr\}\int_\Omega |\nabla c^{p/2}|^2 \dx, \quad a.e.~t\in(0,T^*),$$
  where $K_0=\frac{m}{p-1} |\Omega|^{(p-m)/p} $.
\smallskip

\item[(ii)] In particular, if $\|c_0\|_p<K_0^{-1/m}$, then 
  $\|c(t)\|_p$ 
  is nonincreasing for $t\in (0,T^*)$.
  \end{itemize}
\end{prop}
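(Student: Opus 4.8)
The plan is to read off an exact identity for $\frac{\di}{\dt}\|c(t)\|_p^p$ from Lemma~\ref{lemFunctionals}(ii), isolate the diffusive part as a multiple of $\int_\Omega|\nabla c^{p/2}|^2$, and control the remaining nonlocal convective part by a chain of elementary inequalities in which the weight $c^{p-2}|\nabla c|^2$ is preserved.

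First I would note that, since $c_0\ge 0$ and $c_0\not\equiv 0$, Theorem~\ref{thm1}(ii) yields $c>0$ in $\Omega\times(0,T^*)$, so the positivity hypothesis \eqref{Hypvpos} holds and $c$ solves \eqref{pbmPA} with the space-independent field $b(t)=-A(t)=-\int_{\partial\Omega}c^m\nu\,\dsigma$. Lemma~\ref{lemFunctionals}(ii) with $v=c$ then gives, for $0<t<T^*$,
\[
\frac{\di}{\dt}\|c(t)\|_p^p=-p(p-1)\int_\Omega c^{p-2}|\nabla c|^2\,\dx+p(p-1)\,A(t)\cdot\int_\Omega c^{p-1}\nabla c\,\dx .
\]
Because $\nabla c^{p/2}=\tfrac p2 c^{(p-2)/2}\nabla c$, the diffusive term is exactly $-\tfrac{4(p-1)}{p}\int_\Omega|\nabla c^{p/2}|^2\,\dx$, which accounts for the $-1$ in the braces; it remains to dominate the convective term by $\tfrac{4(p-1)}{p}K_0\|c\|_p^m\int_\Omega|\nabla c^{p/2}|^2\,\dx$.

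The heart of the argument is the estimate of $p(p-1)A\cdot\int_\Omega c^{p-1}\nabla c\,\dx$. I would first turn the boundary vector into an interior one via the divergence theorem, $A=\int_{\partial\Omega}c^m\nu\,\dsigma=m\int_\Omega c^{m-1}\nabla c\,\dx$, and use the Cauchy--Schwarz inequality for the scalar product, reducing the term to $p(p-1)m\bigl(\int_\Omega c^{m-1}|\nabla c|\,\dx\bigr)\bigl(\int_\Omega c^{p-1}|\nabla c|\,\dx\bigr)$. Applying a weighted Cauchy--Schwarz to each factor, by writing $c^{m-1}|\nabla c|=c^{(2m-p)/2}\cdot c^{(p-2)/2}|\nabla c|$ and $c^{p-1}|\nabla c|=c^{p/2}\cdot c^{(p-2)/2}|\nabla c|$, produces the shared factor $\int_\Omega c^{p-2}|\nabla c|^2=\tfrac{4}{p^2}\int_\Omega|\nabla c^{p/2}|^2$ together with $\bigl(\int_\Omega c^{2m-p}\bigr)^{1/2}$ and $\|c\|_p^{p/2}$. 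Finally H\"older's inequality gives $\int_\Omega c^{2m-p}\le\|c\|_p^{2m-p}|\Omega|^{2(p-m)/p}$, a step licit precisely because $0<2m-p\le p$, i.e. exactly in the admissible window $m\le p<2m$. Collecting the powers of $\|c\|_p$ as $\|c\|_p^{(2m-p)/2}\|c\|_p^{p/2}=\|c\|_p^m$, one arrives at the convective bound $\tfrac{4(p-1)}{p}\,m|\Omega|^{(p-m)/p}\,\|c\|_p^m\int_\Omega|\nabla c^{p/2}|^2\,\dx$, i.e. the claimed inequality with constant $K_0=m|\Omega|^{(p-m)/p}$ (which is also the value consistent with the vanishing of $\frac{\di}{\dt}\|c\|_p^p$ on the nonconstant steady states, where $\|c\|_m^m=1/m$ forces $K_0\|c\|_p^m\ge 1$ by H\"older).

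The main obstacle is precisely this last estimate: the two Cauchy--Schwarz steps and the Hölder step must be arranged so that the same weighted Dirichlet integral $\int_\Omega c^{p-2}|\nabla c|^2$ appears in both factors and cancels exactly against the diffusive term, and the single use of Hölder has to fall inside the exponent range $m\le p<2m$; pinning down the sharp constant $K_0$ is the delicate point. Part~(ii) is then a soft consequence: if $\|c_0\|_p\le\eta<K_0^{-1/m}$ then $K_0\|c_0\|_p^m<1$, so by (i) $\frac{\di}{\dt}\|c\|_p^p\le 0$ as long as $\|c(t)\|_p\le\|c_0\|_p$; a standard continuity/maximality argument shows this inequality is self-sustaining, whence $\|c(t)\|_p$ is nonincreasing on $(0,T^*)$.
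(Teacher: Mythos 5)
Your proof is correct and follows essentially the same route as the paper's: positivity of $c$, then Lemma~\ref{lemFunctionals}(ii) with $b(t)=-A(t)$, the divergence theorem to rewrite $A(t)=m\int_\Omega c^{m-1}\nabla c\,\dx$, two weighted Cauchy--Schwarz estimates sharing the factor $\int_\Omega c^{p-2}|\nabla c|^2\,\dx=\frac{4}{p^2}\int_\Omega|\nabla c^{p/2}|^2\,\dx$, and a single H\"older step that is licit exactly because $m\le p<2m$; your continuity argument for part~(ii) is also the intended one. The one substantive difference is the value of $K_0$, and there your bookkeeping is the correct one. The convective term enters as $p(p-1)\,A(t)\cdot\int_\Omega c^{p-1}\nabla c\,\dx$, so the chain of estimates yields the bound $\frac{4(p-1)}{p}\,m|\Omega|^{(p-m)/p}\|c\|_p^m\int_\Omega|\nabla c^{p/2}|^2\,\dx$, i.e.\ $K_0=m|\Omega|^{(p-m)/p}$, as you write. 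The paper's own proof obtains exactly the same intermediate estimates for $|A(t)|$ and $\bigl|\int_\Omega\nabla c^p\,\dx\bigr|$, but in its concluding display the prefactor $(p-1)$ multiplying $A(t)\cdot\int_\Omega\nabla c^p\,\dx$ is silently dropped, which is how the stated constant $K_0=\frac{m}{p-1}|\Omega|^{(p-m)/p}$ arises; discarding that factor is legitimate only when $p\le 2$. Consequently, for $1<p\le 2$ your inequality is stronger than, and implies, the stated one (so your proof does prove the proposition in that range, including its part~(ii)), whereas for $p>2$ the stated inequality is established by neither argument --- and your steady-state remark shows it cannot be: for $p=m>2$ and $\Omega=(0,1)$, the nonconstant steady states satisfy $\|c\|_m^m=1/m$ (see the remark following Theorem~\ref{thm2b}), and the stated inequality applied to such a steady state would force $\frac{m}{m-1}\cdot\frac{1}{m}\ge 1$, i.e.\ $m\le 2$, a contradiction. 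So the proposition should be read with $K_0=m|\Omega|^{(p-m)/p}$, which is what your argument delivers; this correction is harmless downstream, since the proofs of Theorems~\ref{thm2}(ii) and~\ref{thm2b} invoke the proposition with $p=m$ (or some $p\in(1,2)$ when $m=1$) and only need an explicit finite $K_0$, not its precise value.
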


\begin{proof} 
(i) We know that $c>0$ in $\Omega\times(0,T)$, so that Lemma~\ref{lemFunctionals}(ii) gives
$$\frac{\di }{\dt} \|c(t)\|_p^p
		=  -\frac{4(p-1)}{p} \int_\Omega |\nabla c^{p/2}|^2 \dx    +(p-1)A(t) \cdot  \int_\Omega   \nabla c^p \dx.
$$
On the other hand, by the Cauchy-Schwarz and the H\"older inequalities, we have
$$\left| \int_\Omega   \nabla c^p \dx \right| \le 2 \left (\int_\Omega c^p\dx  \right)^{1/2} \left (\int_\Omega|\nabla c^{p/2}|^2\dx \right)^{1/2}$$
	and, using the divergence theorem and recalling 
	the notation \eqref{NewRepres2} and
	$m\le p<2m$,
$$\begin{aligned}
	|A(t)|
	&=\left |\int_{\partial\Omega}c^m \nu \dsigma\right|=\left |\int_\Omega\nabla c^m\dx \right| \\
	&\le \frac{2m}{p} \left(\int_\Omega c^{2m-p}\dx \right) ^{1/2} \left (\int_\Omega|\nabla c^{p/2}|^2\dx  \right)^{1/2} \\
	&\le \frac{2m}{p} |\Omega|^{(p-m)/p}\left(\int_\Omega c^p\dx \right)^{(2m-p)/2p} \left (\int_\Omega|\nabla c^{p/2}|^2\dx  \right)^{1/2}.
\end{aligned}$$
Consequently, 
$$\begin{aligned}
		\frac{\di }{\dt} \|c(t)\|_{L^p}^p
			&\le  -\frac{4(p-1)}{p} \int_\Omega |\nabla c^{p/2}|^2 \dx     
			+  \frac{4m}{p} |\Omega|^{(p-m)/p}\left(\int_\Omega c^p\dx \right)^{m/p} \int_\Omega|\nabla c^{p/2}|^2\dx\\
			&\le\frac{4(p-1)}{p}
  \Bigl\{-1+  \frac{m}{p-1} |\Omega|^{(p-m)/p} \|c\|_p^m \Bigr\}\int_\Omega |\nabla c^{p/2}|^2 \dx,
\end{aligned}$$
which proves the proposition.

(ii) This is a direct consequence of (i). 
			\end{proof}

We shall also use the following exponential convergence lemma.

\begin{lemm}\label{cvexp}
Let $f:\R\to\R$ be locally Lipschitz. There exist $\eta_2=\eta_2(f,\Omega)>0$ and $\lambda=\lambda(\Omega)>0$ with the following property.
If
$c\in E_\infty 
\cap C([0,\infty);L^1(\Omega))$ is a global classical solution of \eqref{pbmP} such that $|\bar c_0|\le\eta_2$ and
\be\label{hyplemexp1}
\lim_{t\to\infty} \|c(t)-\bar c_0\|_\infty=0,
\ee
 where $\bar c_0=\frac{1}{|\Omega|}\int_\Omega c_0(x)\dx$,
then the exponential convergence property \eqref{stabilc} holds for some $C>0$.
\end{lemm}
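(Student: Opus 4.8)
The plan is to set $w=c-\bar c_0$ and prove that $w$ decays exponentially, \emph{first} in $L^2(\Omega)$ by an energy estimate and \emph{then} in $L^\infty(\Omega)$ by a bootstrap based on the smoothing estimates of Lemma~\ref{lem:gaussestim} and the representation formula \eqref{NewRepres}. Two structural facts are used throughout. By mass conservation \eqref{Conserv}, $\int_\Omega w(t)\dx=0$ for all $t$, so the Poincar\'e--Wirtinger inequality $\|w\|_2\le C_P\|\nabla w\|_2$ applies, with $C_P=C_P(\Omega)$. Moreover, since $\int_{\partial\Omega}\nu\dsigma=0$, one may rewrite $A(t)=\int_{\partial\Omega}\bigl(f(c)-f(\bar c_0)\bigr)\nu\dsigma$; because $|\bar c_0|\le\eta_2$ and, by \eqref{hyplemexp1}, $\|w(t)\|_\infty\le\eps_1$ for $t\ge t_0$ (with $t_0$ depending on the solution), the values of $c$ remain in a fixed compact neighbourhood of $\bar c_0$ on which $f$ is Lipschitz with constant $L=L(f)$, whence $|A(t)|\le L\|w(t)\|_{L^1(\partial\Omega)}\le C\|w(t)\|_{L^2(\partial\Omega)}$ for $t\ge t_0$. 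I fix $\eps_1$ by enlarging $t_0$.

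\emph{Energy step.} Multiplying $\partial_t w=\nabla\cdot(\nabla w-c\,A(t))$ by $w$ and integrating, the no-flux condition annihilates the boundary flux term, and since $A(t)$ is spatially constant one gets the identity
$$\tfrac12\tfrac{\di}{\dt}\|w\|_2^2=-\|\nabla w\|_2^2+\tfrac12 A(t)\cdot\!\int_{\partial\Omega}w^2\nu\dsigma+\bar c_0\,A(t)\cdot\!\int_{\partial\Omega}w\,\nu\dsigma.$$
Using the bound on $|A(t)|$ above, the two boundary terms are controlled by $\kappa\,\|w\|_{L^2(\partial\Omega)}^2$ with $\kappa\le CL(\eps_1+\eta_2)$. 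Invoking the trace inequality $\|w\|_{L^2(\partial\Omega)}^2\le \|\nabla w\|_2^2+C(\Omega)\|w\|_2^2$ (taking the gradient coefficient equal to $1$) together with Poincar\'e, and absorbing the gradient term thanks to the smallness of $\kappa$, yields $\tfrac{\di}{\dt}\|w\|_2^2\le -2\lambda\|w\|_2^2$ for $t\ge t_0$, provided $\eps_1,\eta_2$ are small enough; here $\lambda=\lambda(\Omega)$ (essentially $\tfrac1{2C_P}$), while the smallness threshold on $\eta_2$ depends only on $f$ and $\Omega$. Hence $\|w(t)\|_2\le Ce^{-\lambda t}$ for $t\ge t_0$.

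\emph{Bootstrap to $L^\infty$.} I apply the representation formula \eqref{NewRepres} on a short window: for $t\ge t_0+\delta$,
$$w(t)=S(\delta)w(t-\delta)+\int_{t-\delta}^t K_\nabla(t-s)[A(s)c(s)]\dss.$$
Estimating the integral in $W^{1/2,2}$ by \eqref{smooth2} with $\ell=0,\ q=2,\ k=\tfrac12$ (kernel $(t-s)^{-3/4}$), bounding $\|S(\delta)w(t-\delta)\|_{1/2,2}\le C\delta^{-1/4}\|w(t-\delta)\|_2$ via \eqref{smooth1}, and using the trace bound $|A(s)|\le CL\|w(s)\|_{1/2,2}$ together with $\|c(s)\|_\infty\le M_0$, one obtains a closed Volterra inequality for $g(t)=\|w(t)\|_{1/2,2}$. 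Multiplying by $e^{\lambda t}$ and choosing $\delta$ small enough that $\int_0^\delta\tau^{-3/4}\,\diff\!\tau$ is sufficiently small, the loop closes on windows of length $\delta$ and gives $\|w(t)\|_{1/2,2}\le Ce^{-\lambda t}$, hence $|A(t)|\le Ce^{-\lambda t}$. Feeding this back into the same representation and estimating in $L^\infty$ by \eqref{smooth1} ($\|S(\delta)w(t-\delta)\|_\infty\le C\delta^{-n/4}\|w(t-\delta)\|_2$) and \eqref{smooth1b} with $q=r=\infty$ (kernel $(t-s)^{-1/2}$) yields $\|w(t)\|_\infty\le Ce^{-\lambda t}$ for $t\ge t_0+\delta$. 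On the compact interval $[1,t_0+\delta]$ the bound holds after enlarging $C$, which gives \eqref{stabilc}.

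\emph{Main obstacle.} I expect the delicate point to be the passage from $L^2$ to $L^\infty$ while preserving the exponential rate. The nonlocal coefficient $A$ is governed by boundary traces, which forces one to work in the fractional space $W^{1/2,2}$ (as in the local theory), where the smoothing constants are not small; the Volterra loop can then be closed only by shrinking the time window $\delta$, not by smallness of the nonlinearity. Verifying that the rate $\lambda$ does not degrade through these steps, and that it depends on $\Omega$ alone (via the Poincar\'e and trace constants) while the threshold on $\eta_2$ depends only on $f$ and $\Omega$, is the part requiring the most care.
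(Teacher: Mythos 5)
Your proposal is correct, and its first half is essentially the paper's proof: after multiplying by $w$ you obtain the same energy identity (you rewrite the convection contribution as boundary integrals, while the paper keeps it in the volume form $A(t)\cdot\int_\Omega(w+\bar c_0)\nabla w\,\dx$; the two are identical by the divergence theorem), and in both arguments the dissipation is made dominant via the trace inequality and the Poincar\'e--Wirtinger inequality (legitimate because $\int_\Omega w\,\dx=0$ by mass conservation), with smallness supplied jointly by $|\bar c_0|\le\eta_2$ and by taking $t$ large so that $\|w(t)\|_\infty$, hence $|A(t)|$, is small; the dependencies $\lambda=\lambda(\Omega)$, $\eta_2=\eta_2(f,\Omega)$ come out the same way, with the solution-dependent quantities entering only through $C$. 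Where you genuinely diverge is the passage from $L^2$ to $L^\infty$. The paper does this very cheaply: interpolating between the uniform bound $\sup_{t\ge T_0}\|c(t)\|_\infty\le 2$ and the $L^2$ decay gives $\|w(t)\|_q\le Ce^{-K_1t/q}$ for every finite $q$; then, since parabolic Schauder regularity (Lemma~\ref{lem:Holdersmoothin}) yields $\sup_{t\ge T_0}\|c(t)\|_{C^1}<\infty$, the interpolation $\|w\|_\infty\le K\|w\|_{1/2,q}\le K\|w\|_{1,q}^{1/2}\|w\|_q^{1/2}$ with $q>2n$ concludes. Your route --- Duhamel on windows of length $\delta$, a Volterra inequality for $\|w\|_{1/2,2}$ closed by shrinking $\delta$, then feeding the resulting decay of $A(t)$ back into the representation formula --- is heavier but valid: the kernel exponents $(t-s)^{-3/4}$ and $(t-s)^{-1/2}$ are integrable, the sup in the Volterra loop is finite on each compact interval because $c$ is a classical solution, and the windowing indeed preserves the rate, so your $L^\infty$ rate equals the $L^2$ rate $\lambda$, whereas the paper's degrades to $K_1/2q$ (both of the admissible form $\lambda(\Omega)$). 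Two small points you should make explicit if you write this up: (a) Lemma~\ref{lem:repres} is stated under the power-growth hypothesis \eqref{hypHoldersmoothin1}, so you should note that on your time windows $c$ is uniformly bounded and the proof of \eqref{NewRepres} applies verbatim to a general locally Lipschitz $f$; (b) the local-in-time boundedness of $\|w(t)\|_{1/2,2}$, needed for the Volterra sup to be finite, comes from the classical regularity $c\in C^{2,1}(\overline\Omega\times(0,\infty))$.
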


\begin{proof}
In this proof, $K, K_1$ will denote generic positive constants depending only on $f,\Omega$
and $C$ a generic positive constant possibly depending on the solution $c$.
Assume $|\bar c_0|\le 1$. By assumption \eqref{hyplemexp1},
there exists $T_0>0$ such that 
\be\label{decayat0}
\sup_{t\ge T_0} \|c(t)\|_\infty\le 2.
\ee
Recalling the notation \eqref{NewRepres2} and using also the continuity of~$f$, we have 
\be\label{decayat}
\lim_{t\to\infty}A(t)=\int_{\partial \Omega}f(\bar c_0) \nu \dsigma
=f(\bar c_0)\int_{\partial \Omega} \nu \dsigma=0.
\ee
Set $w:=c-\bar c_0$. By \eqref{decayat0} and the local Lipschitz continuity of $f$, we get, for all $t\ge T_0$,
$$|A(t)|=\Bigl|\ds\int_{\partial\Omega}f(c)\nu \dsigma\Bigr|
=\Bigl|\ds\int_{\partial\Omega}\bigl(f(c)-f(\bar c_0)\bigr)\nu \dsigma\Bigr|
\le\ds\int_{\partial\Omega}\bigl|f(c)-f(\bar c_0)\bigr| \dsigma
\le K\ds\int_{\partial\Omega} |w| \dsigma.$$
Since $\int_\Omega w(t)\dx=0$, it follows from the trace and the Poincar\'e-Wirtinger inequalities that
\be\label{decayat2}
|A(t)|\le K\|w\|_{H^1}\le K\Bigl(\int_\Omega |\nabla w|^2 \dx\Bigr)^{1/2},\quad t\ge T_0.
\ee

Next, the function $w$ satisfies
\be\label{pbmw}
\begin{aligned}
	\hfill\partial_t w&=\nabla\cdot\Bigl(\nabla w-A(t)w\Bigr),	 &&x\in\Omega,\ t>0,\\
	\hfill \partial_\nu w&=(A(t)\cdot \nu)c,&&x\in\partial\Omega,\ t>0. \\
\end{aligned}
\ee
Multiplying the PDE in \eqref{pbmw} by $w$ (recalling \eqref{regulH1L2}
if $\Omega$ is a cylinder~\eqref{defCyl}),
integrating by parts and using the boundary conditions, we obtain, for a.e. $t>0$,
$$
\begin{aligned}
		\frac{1}{2}\frac{\di }{\dt} \|w(t)\|_2^2
		&=\int_\Omega ww_t\dx = \int_\Omega w\nabla\cdot\Bigl(\nabla w-A(t)w\Bigr)\dx \\
		&=  -\int_\Omega |\nabla w|^2 \dx    +A(t) \cdot  \int_\Omega  w\nabla w \dx 
		+\int_{\partial\Omega} w\bigl(\partial_\nu w-(A(t)\cdot\nu)w\bigr) \dsigma \\
				&=  -\int_\Omega |\nabla w|^2 \dx  +A(t) \cdot  \int_\Omega w\nabla w \dx 
		+\bar c_0A(t)\cdot\int_{\partial\Omega} w\nu \dsigma \\
				&=  -\int_\Omega |\nabla w|^2 \dx  +A(t) \cdot  \int_\Omega \bigl(w+\bar c_0\bigr)\nabla w \dx.
		\end{aligned}
$$
By the Cauchy-Schwarz and the Poincar\'e-Wirtinger inequalities, we have
$$ \int_\Omega |w||\nabla w|\dx  \le  \Bigl(\int_\Omega |\nabla w|^2\dx \Bigr)^{1/2}\Bigl(\int_\Omega w^2\dx \Bigr)^{1/2}\le K\int_\Omega |\nabla w|^2\dx.$$
This along with \eqref{decayat2} yields
$$
		\frac{1}{2}\frac{\di }{\dt} \|w(t)\|_2^2\le \Bigl\{-1+K\bigl(|A(t)|+|\bar c_0|\bigr)\Bigr\} \int_\Omega |\nabla w|^2\dx ,
		\quad a.e.~t\ge T_0.
$$
Take $\eta_2=\min(1,(4K)^{-1})$ and $T_1>T_0$ sufficiently large so that $\sup_{t\ge T_1}|A(t)|\le \eta_2$.
Using again the Poincar\'e-Wirtinger inequality, we get
$$
	\frac{\di }{\dt} \|w(t)\|_2^2\le - \int_\Omega |\nabla w|^2\dx  \le - K_1  \|w(t)\|_2^2, \quad a.e.~t\ge T_1.
$$
By integration, we get
$$\|w(t)\|_2^2\le Ce^{-K_1t},\quad t\ge T_1$$
and, by \eqref{decayat0} and interpolation, it follows that for all $q\in[2,\infty)$,
$$\|w(t)\|_q\le Ce^{-\frac{K_1}{q}t},\quad t\ge T_1.$$
On the other hand, by \eqref{decayat0}  and parabolic regularity
(or Proposition~\ref{locexist-cyl}(vi) if $\Omega$ is a cylinder~\eqref{defCyl}), we have
$\sup_{t\ge T_0} \|c(t)\|_{C^1}<\infty$. Choosing some $q>2n$, a further interpolation and Sobolev inequality yield
$$\|w(t)\|_\infty\le K\|w(t)\|_{1/2,q}\le  K\|w(t)\|_{1,q}^{1/2}\|w(t)\|_q^{1/2}\le Ce^{-\frac{K_1}{2q}t},\quad t\ge T_1,$$
which proves the lemma.
\end{proof}

\begin{proof}[Proof of Theorem~\ref{thm2b}]
 Let $K_0, \eta_0$ be given by Propositions~\ref{PropAprioriLp} and \ref{thm2prop}(ii), respectively.
 Assume that $\|c_0\|_m\le \eta_0$ if $m=1$ or that $\|c_0\|_m\le\min((2K_0)^{-1/m},\eta_0)$ if $m>1$.
By Proposition~\ref{PropAprioriLp}(ii) (or by mass conservation if $m=1$), 
\be\label{monotoneLm}
t \mapsto \|c(t)\|_m \ \ \hbox{is nonincreasing and  $\le \eta_0$ for $t\in (0,T^*)$.}
\ee
Setting $q=nm/(n-1)>m$ (or any finite $q>m$ if $n=1$), it follows from \eqref{regulc2b} and a time shift
that $\limsup_{t\to T^*}\|u(t)\|_q<\infty$.
Theorem~\ref{thm1}(iii) then guarantees that $T^*=\infty$.
Also, by \eqref{estimciHolderC} or \eqref{estimciHolderCcyl}, we have
\be\label{regulC1}
\|c\|_{C^{1+\alpha,\alpha/2}(\overline\Omega\times[1,\infty))}<\infty.
\ee
To show the uniform convergence to $\bar c_0$, the idea is to use the Liapunov functional given by  
Proposition~\ref{PropAprioriLp} for small initial data, along with the conservation of the $L^1$ norm.
To this end, let us first consider the case $m>1$. 
In view of \eqref{monotoneLm}, we may set $\ell:=\lim_{t\to\infty} \|c(t)\|_m\in [0,\infty)$.
Putting $\tilde c_j=c(x,t+j)$ for $j\in\N$, we see that each $\tilde c_j(x,t)$ is a solution of \eqref{pbmP}, with initial data $c(\cdot,j)$.
By \eqref{regulC1}, 
the sequence $(\tilde c_j)$ is precompact in $C_{loc}(\overline\Omega\times[0,\infty))$.
For any $\tau>0$, by Proposition~\ref{PropAprioriLp}(i) with $p=m$, we have
\be\label{VarLiapFonct}
\begin{aligned}
\frac{2(m-1)}{m}\int_0^\tau \int_\Omega |\nabla \tilde c_j^{m/2}|^2 \dx
&= \frac{2(m-1)}{m}\int_j^{j+\tau} \int_\Omega |\nabla  c^{m/2}|^2 \dx \\
&\le \|c(j)\|_m^m-\|c(j+\tau)\|_m^m \to \ell-\ell =0,\quad j\to\infty.
\end{aligned}
\ee
Let $\tilde c$ be a cluster point of $(\tilde c_j)$, i.e. $\tilde c_{j_k}\to \tilde c$ in $C_{loc}(\overline\Omega\times[0,\infty))$
for some subsequence $j_k$. It follows from \eqref{VarLiapFonct} that $\tilde c(x,t)=\tilde c(t)$.
Since $\|c(t)\|_1=\|c_0\|_1$ for all $t\ge 1$, we deduce that $\tilde c\equiv \frac{1}{|\Omega|}\|c_0\|_1$. 
We have thus proved that $\lim_{t\to\infty} \|c(t)-\bar c_0\|_\infty=0$ in the case $m>1$.
In the case $m=1$, we note from \eqref{regulc2b} that
$\|c(1/2)\|_q\le C(\Omega)\|c_0\|_1$, with $q=n/(n-1)>1$ (or any finite $q>1$ if $n=1$).
Assuming $\|c_0\|_1$ possibly smaller, we may thus apply Proposition~\ref{PropAprioriLp} with some $p\in(1,2)$
and the above argument works exactly as before.

Finally, to check the exponential decay assertion, it suffices to apply Lemma~\ref{cvexp}, observing that
$|\bar c_0|<\eta_2$ for $\|c_0\|_m$ small.
\end{proof}

\begin{proof}[Proof of Theorem~\ref{thm2}(ii)]
It is completely similar to the proof of Theorem~\ref{thm2b} in the case~$m=1$.
\end{proof}

For the proof of Theorem~\ref{thm1aneg} we need the following lemma 
(which will be also used in the proof of Theorem~\ref{thmBUrate}).

\begin{lemm} \label{lem:L2b}
Let $T\in(0,\infty]$, $b\in C(0,T;\R^n)$ and $v\in E_T$, 
with also $v\in L^2_{loc}((0,T);H^2(\Omega))\cap H^1_{loc}((0,T);L^2(\Omega))$
if $\Omega$ is a cylinder \eqref{defCyl}. Assume that $v$
is a classical solution of 
$$\begin{aligned}
	\hfill v_t&=\nabla\cdot\Bigl(\nabla v+b(t)v\Bigr),
	 &&x\in\Omega,\ 0<t<T,\\
	\hfill 0&=\Bigl(\nabla v+b(t)v\Bigr)\cdot \nu,&&x\in\partial\Omega,\ 0<t<T.
\end{aligned}$$
If, for some $t_0\in(0,T)$,
$b\in L^2(t_0,T;\R^n)$ then, for each $p\in[1,\infty)$, we have $v\in L^\infty(t_0,T;L^p(\Omega))$.
\end{lemm}

\begin{proof}
Assume without loss of generality that $p$ is an even integer. 
	By Lemma~\ref{lemFunctionals}(ii), we have
\be\label{EnergyArg}
		\frac{1}{p(p-1)}\frac{\di}{\dt} \|v(t)\|_p^p
		=  -\int_\Omega v^{p-2}|\nabla v|^2 \dx    - b(t) \cdot  \int_\Omega  v^{p-1}\nabla v \dx,\quad 
		a.e.~t\in (0,T).
		\ee
Since, by the Cauchy-Schwarz inequality, 
$$\begin{aligned}
\Bigl|b(t)\cdot\int_\Omega  v^{p-1}\nabla v \dx\Bigr|
&\le |b(t)|\Bigl(\int_\Omega v^{p-2}|\nabla v|^2 \dx\Bigr)^{1/2}\Bigl(\int_\Omega v^p \dx\Bigr)^{1/2} \\
&\le |b(t)|^2\int_\Omega v^p \dx + \int_\Omega v^{p-2}|\nabla v|^2 \dx,
		\end{aligned}$$
it follows that $\frac{\di }{\dt} \|v(t)\|_p^p\le p(p-1) |b(t)|^2\|v(t)\|_p^p$. By integration in time, we get
$$\|v(t)\|_p^p\le \|v(t_0)\|_p^p \ \exp\Bigl\{p(p-1)\int_{t_0}^T |b(t)|^2 \dt\Bigr\}<\infty,\quad t_0<t<T,$$
which proves the lemma.
\end{proof}

\begin{proof}[Proof of Theorem~\ref{thm1aneg}]
Local existence-uniqueness and positivity are guaranteed by Remark~\ref{remsignf}.

Let us show global existence.
First assume $m>1$ and set $\phi(t)=\|c(t)\|_m^m$. 
	By Lemma~\ref{lemFunctionals}(ii)
	with $p=m$ and 
$b(t)=$ $\int_{\partial\Omega}c^m\nu\dsigma=\int_\Omega\nabla c^m\dx$, we have
$$\begin{aligned}
\phi'(t)
		&=  -m(m-1)\int_\Omega c^{m-2}|\nabla c|^2 \dx    - m(m-1)b(t) \cdot \int_{ \Omega}c^{m-1}\nabla c\dy \\
		&= -\frac{4(m-1)}{m} \int_{\Omega} |\nabla c^{m/2}|^2 \dx  - (m-1)
		\left(\int_{ \Omega}\nabla c^m\dx\right)^2\le 0.
		\end{aligned}$$
Next consider the case $m=1$ and set $\phi(t)=\int_\Omega (c\log c+1)\dx\ge 0$
(since $s\log s+1>0$ for $s>0$).
By Lemma~\ref{lemFunctionals}(i), we have
$$\phi'(t)
		=  -\int_\Omega c^{-1}|\nabla c|^2 \dx    - b(t) \cdot \int_{ \Omega}\nabla c\dy 
		= -4\int_{\Omega} |\nabla c^{1/2}|^2 \dx  - \left(\int_{ \Omega}\nabla c\dx\right)^2\le 0.
$$
		In either case, fixing some $t_0\in(0,\min(1,T^*))$, it follows that 
\be\label{cvintnablacm}
\int_{t_0}^{T^*}\int_{\Omega} |\nabla c^{m/2}|^2 \dx \dt+ \int_{t_0}^{T^*}\left(\int_{ \Omega}\nabla c^m\dx\right)^2\dt
		\le C\phi(t_0)<\infty,
		\ee
		hence in particular $b\in L^2(t_0,T^*)$.
		It follows from Lemma~\ref{lem:L2b} that, for each $p\in[1,\infty)$, $c\in L^\infty(t_0,T^*;L^p(\Omega))$.
		By Remark~\ref{remsignf} we deduce that $T^*=\infty$ and that 
		$c\in E_\infty$.
			
		Finally, since $\phi'\le 0$, there exists $\ell:=\lim_{t\to\infty} \phi(t)\in [0,\infty)$ and using \eqref{cvintnablacm} and arguing similarly as in the proof of Theorem~\ref{thm2b}, we conclude that 
		 $\lim_{t\to\infty} \|c(t)-\bar c_0\|_\infty=0$. 
	\end{proof}

\section{Proof of Theorem~\ref{thm2sharp}}\label{sec:sharp}

The proof relies on entropy and on
a refined version of the smoothing effects in Section~\ref{Sec:proof_th1} (cf.~\eqref{estimMTb0}).
The latter, which is formulated in Proposition~\ref{lem:decomp} hereafter, is based on a decomposition of the solution between a small $L^1$ part and a bounded part. 
To this end we need some notation.
For given $\tau>0$, we denote
$$X_\tau=\bigl\{v\in L^\infty(0,\tau;L^1(\Omega))\cap L^\infty_{loc}((0,\tau]);W^{1,1}(\Omega));\,M_\tau(v)<\infty\bigr\},$$
\be\label{DefdecompM}
M_\tau(v)={\rm ess}\hskip -5pt \sup_{t\in(0,\tau)} t^{1/2}\|v(t)\|_{1,1},\quad
Y_\tau=L^\infty(0,\tau; C(\overline\Omega)),
\ee
and, for all $(v_1,v_2)\in X_\tau\times Y_\tau$,
\be\label{DefdecompH}
H_\tau(v_1,v_2)={\rm ess}\hskip -5pt\sup_{t\in(0,\tau)} \Bigl(\|v_1(t)\|_1 + t^{1/2}\|v_1(t)\|_{1,1}+ \tau^{1/2}\|v_2(t)\|_\infty\Bigr).
\ee
We then introduce the $(X_\tau+Y_\tau)$-norm, defined for $v\in X_\tau+Y_\tau$~by
$$\mathcal{N}_\tau(v)=\inf\bigl\{H_\tau(v_1,v_2);\ (v_1,v_2)\in \mathcal{E}_\tau(v)\bigr\},$$
where 
$$\mathcal{E}_\tau(v)=\bigl\{(v_1,v_2)\in X_\tau\times Y_\tau;\ v(t)=v_1(t)+v_2(t)\ \hbox{for a.e.~$t\in(0,\tau)$}\bigr\}.$$

\begin{prop} \label{lem:decomp}
Let $f(c)=c$.
There exist constants $\eps_0, C_0>0$ (depending only on $\Omega$) with the following property. 
Let $T, R>0$ and let $c\in E_T$ 
be a classical solution of \eqref{pbmP} on $[0,T]$ such that,
for some $(c_{0,1},c_{0,2})\in L^1(\Omega)\times C(\overline\Omega)$, $c_0:=c(\cdot,0)$ can be decomposed as
$$ 
c_0=c_{0,1}+c_{0,2},\ \hbox{ with }\ \|c_{0,1}\|_1\le \eps_0,\ \|c_{0,2}\|_\infty\le R.
$$ 
Then we have
\be\label{estimdecomp1}
\mathcal{N}_\tau(c) \le C_0,\quad 0<\tau<\min(\eps_0R^{-2},T).
\ee
In particular, for $p=n/(n-1)$ (any finite $p$ if $n=1$), we have
\be\label{estimdecomp2}
\|c(t)\|_p \le \bar C_0t^{-1/2},\quad 0<t<\min(\eps_0R^{-2},T),
\ee
with $\bar C_0=\bar C_0(\Omega, a)>0$.
\end{prop}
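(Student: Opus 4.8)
The plan is to exploit the representation formula \eqref{NewRepres} (valid for the given smooth solution by Lemma~\ref{lem:repres}) together with a splitting of $c$ adapted to the decomposition $c_0=c_{0,1}+c_{0,2}$ of the data. Since $f(c)=c$, the field $A(s)=\int_{\partial\Omega}c(s)\nu\,\dsigma$ is linear in $c$ and, by smoothness of $c$, bounded on $[0,\tau_{\max}]$, where $\tau_{\max}=\min(\eps_0R^{-2},T)$. I would define $c_2$ as the unique solution of the \emph{linear} Volterra equation
$$c_2(t)=S(t)c_{0,2}+\int_0^t K_\nabla(t-s)[A(s)c_2(s)]\,\dss,$$
solvable in $L^\infty(0,\tau_{\max};C(\overline\Omega))$ since the kernel is $O((t-s)^{-1/2})$ in $\mathcal L(L^\infty)$ and $A$ is bounded, and set $c_1=c-c_2$. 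Subtracting from \eqref{NewRepres} shows that $c_1$ solves the same equation with data $c_{0,1}$ and source $A(s)c_1(s)$, so that $(c_1,c_2)\in X_\tau\times Y_\tau$ is an admissible decomposition and $\mathcal N_\tau(c)\le \mathcal N(\tau):=H_\tau(c_1,c_2)$.

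The core is a set of self-consistent bounds that close into a quadratic inequality. By the trace embedding \eqref{traceimb} with $q=1$ and $\|c_2\|_{L^1(\partial\Omega)}\le C\|c_2\|_\infty$, one has, for $s<\tau$,
$$|A(s)|\le C\|c_1(s)\|_{1,1}+C\|c_2(s)\|_\infty\le Cs^{-1/2}\mathcal N(\tau)+C\tau^{-1/2}\mathcal N(\tau)\le 2Cs^{-1/2}\mathcal N(\tau).$$
Inserting this into the three terms of \eqref{DefdecompH}: for $\|c_1(t)\|_1$ I use \eqref{smooth1} and \eqref{smooth1b} with $q=r=1$; for $\tau^{1/2}\|c_2(t)\|_\infty$ the bound $\|S(t)c_{0,2}\|_\infty\le R$ (with $\tau^{1/2}R\le\eps_0^{1/2}$ since $\tau<\eps_0R^{-2}$) together with \eqref{smooth1b} for $q=r=\infty$; in each case the nonlinear part reduces to $\int_0^t(t-s)^{-1/2}s^{-1/2}\,\dss=\pi$ and contributes $C\mathcal N(\tau)^2$. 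The delicate term is $t^{1/2}\|c_1(t)\|_{1,1}$, where the naive choice $\ell=k=1$ in \eqref{smooth2} produces the non-integrable factor $s^{-1}$. I would instead apply \eqref{smooth2} with $q=1$, $k=1$ and an intermediate $\ell\in(0,1)$, together with the interpolation $\|c_1(s)\|_{\ell,1}\le\|c_1(s)\|_1^{1-\ell}\|c_1(s)\|_{1,1}^\ell$; this gives the convergent integral $\int_0^t(t-s)^{-1+\ell/2}s^{-(1+\ell)/2}\,\dss=C t^{-1/2}$ and yields $t^{1/2}\|c_1(t)\|_{1,1}\le C\eps_0+C\mathcal N(\tau)^2$. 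Adding the three estimates and taking the supremum in $t$ gives
$$\mathcal N(\tau)\le C\eps_0^{1/2}+C\mathcal N(\tau)^2,\qquad 0<\tau<\tau_{\max}.$$

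To conclude I would run a continuity/bootstrap argument in $\tau$. Smoothness of $c$ up to $t=0$ gives $\mathcal N(\tau)<\infty$ for every $\tau$, continuity (and monotonicity) of $\tau\mapsto\mathcal N(\tau)$, and $\limsup_{\tau\to0}\mathcal N(\tau)\le C\eps_0$ (here the nonlinear contributions carry an extra $t^{1/2}\to0$ because $A$ is a fixed bounded function, while the linear terms contribute $O(\eps_0)$). For $\eps_0$ small depending only on $\Omega$, the quadratic inequality confines $\mathcal N(\tau)$ to $[0,x_-]\cup[x_+,\infty)$ with $x_-\sim C\eps_0^{1/2}<x_+$; since $\mathcal N(\tau)<x_-$ for small $\tau$ and cannot jump the gap, $\mathcal N_\tau(c)\le\mathcal N(\tau)\le x_-=:C_0$, which is \eqref{estimdecomp1}. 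Finally, \eqref{estimdecomp2} follows by choosing a near-optimal decomposition and writing $\|c(t)\|_p\le\|c_1(t)\|_p+\|c_2(t)\|_p$: the Sobolev embedding $W^{1,1}(\Omega)\subset L^{n/(n-1)}(\Omega)$ (resp.\ $W^{1,1}\subset L^\infty$ if $n=1$) gives $\|c_1(t)\|_p\le C\|c_1(t)\|_{1,1}\le CC_0t^{-1/2}$, while $\|c_2(t)\|_p\le|\Omega|^{1/p}\|c_2(t)\|_\infty\le CC_0\tau^{-1/2}\le CC_0t^{-1/2}$ since $t<\tau$.

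The step I expect to be the main obstacle is the $W^{1,1}$ estimate of the nonlinear term: the bilinear structure $A(s)c_1(s)$ forces a double time-singularity that becomes integrable only after the fractional-smoothing-plus-interpolation device above. A secondary difficulty is arranging the continuity-in-$\tau$ closure so that $C_0$ depends on $\Omega$ alone, which is precisely where the scaling threshold $\tau<\eps_0R^{-2}$ (equivalently $\tau^{1/2}R\le\eps_0^{1/2}$) is essential, and where the linearity $A\sim\int_{\partial\Omega}c$ coming from $m=1$ makes the whole feedback exactly quadratic.
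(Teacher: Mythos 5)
Your proposal is correct in substance and rests on the same analytic core as the paper's proof: the Duhamel representation \eqref{NewRepres}, the bound $|A(s)|\le C\|c_1(s)\|_{1,1}+C\|c_2(s)\|_\infty$, the fractional-smoothing-plus-interpolation device to render the $W^{1,1}$ term integrable (the paper does exactly this with the fixed choice $\ell=1/2$, i.e.\ \eqref{smooth2} with $\ell=1/2$, $k=q=1$ and $\|c_1\|_{1/2,1}\le\|c_1\|_1^{1/2}\|c_1\|_{1,1}^{1/2}$), the quadratic self-inequality closed by a continuity-in-$\tau$ argument, and Sobolev embedding $W^{1,1}\subset L^{n/(n-1)}$ for \eqref{estimdecomp2}. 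The genuine structural difference is where the decomposition comes from. The paper never constructs a specific pair: it takes an \emph{arbitrary} $(c_1,c_2)\in E_\tau(c)$, pushes it through Duhamel to a new admissible pair $(\tilde c_1,\tilde c_2)$, proves $H_\tau(\tilde c_1,\tilde c_2)\le C\eps_0+R\tau^{1/2}+CH_\tau^2(c_1,c_2)$, and takes the infimum, so the quadratic inequality holds directly for $h(\tau)=\mathcal N_\tau(c)$; the price is Lemma~\ref{lem:decomp0}, i.e.\ monotonicity and continuity of the \emph{infimum} functional, whose right-continuity is the fiddliest part of the section. You instead solve the linear Volterra equation driven by the known bounded field $A$ of the given smooth solution, producing one canonical pair --- in effect a fixed point of the paper's improvement map --- and run the self-consistent estimate on that pair; continuity and monotonicity of $\tau\mapsto H_\tau(c_1,c_2)$ for a \emph{fixed} pair are then elementary, so you bypass Lemma~\ref{lem:decomp0} entirely. (Incidentally, your bookkeeping $C\eps_0^{1/2}+C\mathcal N^2$, keeping the $R\tau^{1/2}\le\eps_0^{1/2}$ contribution as a square root, is the cleaner way to state the inequality that the paper writes as $C_1\eps_0+C_1h^2$.)

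What your route costs, and the one step you should not gloss over, is the assertion $(c_1,c_2)\in X_\tau\times Y_\tau$, equivalently the a priori finiteness of $\mathcal N(\tau)=H_\tau(c_1,c_2)$: without it, the inequality $\mathcal N\le C\eps_0^{1/2}+C\mathcal N^2$ and your continuity claim are vacuous. Solvability of the Volterra equation in $L^\infty(0,\tau;C(\overline\Omega))$ is indeed immediate from the $O((t-s)^{-1/2})$ kernel, but membership of $c_1=c-c_2$ in $X_\tau$ requires $\|c_2(t)\|_{1,1}\le Ct^{-1/2}$, which does not follow from the $L^\infty$ theory alone. You need one extra bootstrap on the Volterra equation: first $\|c_2(t)\|_{\ell,\infty}\le Ct^{-\ell/2}$ for some $\ell\in(0,1)$ via \eqref{smooth1} and \eqref{smooth2}, then $\|c_2(t)\|_{1,1}\le Ct^{-1/2}$ via \eqref{smooth2} with $q=1$, $k=1$ and the same $\ell$; the constants here may depend on the solution, which is harmless since they enter only the a priori finiteness, not the final bound. (Equivalently, define $c_2$ as the classical solution of the linear Neumann drift problem $\partial_t c_2=\nabla\cdot(\nabla c_2-A(t)c_2)$ with data $c_{0,2}$ and invoke parabolic regularity.) With that step supplied, the rest of your computations --- the Beta-function identity $\int_0^t(t-s)^{-1+\ell/2}s^{-(1+\ell)/2}\,\dss=Ct^{-1/2}$, the role of the threshold $\tau^{1/2}R\le\eps_0^{1/2}$, and the gap argument with $x_-\sim C\eps_0^{1/2}$ --- are all correct, and your proof is complete.
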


For the proof of Proposition~\ref{lem:decomp}, we first need some basic properties of $\mathcal{N}_\tau(v)$ as a function of $\tau$
when $v$ is sufficiently regular.

\begin{lemm} \label{lem:decomp0} 
Let $v\in C([0,T);W^{1,1}(\Omega))\cap L^\infty_{loc}([0,T);C(\overline\Omega))$ and set $h(\tau)=\mathcal{N}_\tau(v)$ for $\tau\in(0,T)$.
Then:
\begin{itemize}[topsep=-2pt]\setlength\itemsep{-3pt}
\item[(i)] $h$ is nondecreasing;

\medskip
\item[(ii)] $h$ is continuous;

\medskip
\item[(iii)] 
$\lim_{\tau\to 0} h(\tau)=0$.
\end{itemize}
\end{lemm}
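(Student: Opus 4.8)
The three assertions are of increasing difficulty, so I would dispose of monotonicity and the vanishing limit first, as they are almost immediate, and then concentrate all the effort on continuity.

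For (i), I would observe that if $\tau_1<\tau_2$ then the restriction of any pair $(v_1,v_2)\in E_{\tau_2}(v)$ to $(0,\tau_1)$ lies in $E_{\tau_1}(v)$, and that $\tau_1^{1/2}\le\tau_2^{1/2}$ together with $(0,\tau_1)\subset(0,\tau_2)$ forces $H_{\tau_1}(v_1,v_2)\le H_{\tau_2}(v_1,v_2)$; taking the infimum over $E_{\tau_2}(v)$ gives $h(\tau_1)\le h(\tau_2)$. For (iii), I would simply test the infimum with the trivial decomposition $v_1\equiv 0$, $v_2\equiv v$, admissible because $v\in L^\infty_{loc}([0,T);C(\overline\Omega))$. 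This yields $h(\tau)\le H_\tau(0,v)=\tau^{1/2}\esssup_{t\in(0,\tau)}\|v(t)\|_\infty\le \tau^{1/2}R_0$, where $R_0=\esssup_{t\in(0,\tau_0)}\|v(t)\|_\infty<\infty$ for a fixed small $\tau_0$, and letting $\tau\to0$ finishes the proof.

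The heart of the matter is (ii). Since $h$ is already nondecreasing, it suffices to rule out jumps, i.e.\ to prove both left- and right-continuity at each $\tau_*\in(0,T)$. My plan is to reduce both to a single extension estimate: given $0<\tau_a<\tau_b<T$ and a near-optimal pair $(v_1,v_2)\in E_{\tau_a}(v)$ with $H_{\tau_a}(v_1,v_2)\le h(\tau_a)+\eps$, I would construct an admissible extension $(w_1,w_2)\in E_{\tau_b}(v)$ satisfying
$$H_{\tau_b}(w_1,w_2)\le (1+\omega)\,H_{\tau_a}(v_1,v_2)+\rho,$$
where $\omega=\omega(\tau_a,\tau_b)$ and $\rho=\rho(\tau_a,\tau_b)$ tend to $0$ as $\tau_b-\tau_a\to0$. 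Applying this with $\tau_a=\tau_*<\tau_b$ and letting $\tau_b\downarrow\tau_*$ yields right-continuity, while applying it with $\tau_a=\tau'<\tau_b=\tau_*$ and letting $\tau'\uparrow\tau_*$ yields left-continuity; in both cases the reverse inequality is supplied by monotonicity.

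The construction of the extension is where the real work lies, and I expect it to be the main obstacle. On $(0,\tau_a)$ I keep $(v_1,v_2)$; on $[\tau_a,\tau_b]$ I would \emph{freeze} the decomposition at a single well-chosen time. Precisely, I would pick $s_0<\tau_a$ close to $\tau_a$ at which $v_1(s_0)\in W^{1,1}$ and $v_2(s_0)\in C(\overline\Omega)$ are genuine functions and the defining integrand is $\le H_{\tau_a}(v_1,v_2)$ (possible for a.e.\ $s_0$), and set $w_1(t)=v_1(s_0)+\bigl(v(t)-v(s_0)\bigr)$, $w_2(t)=v_2(s_0)$ for $t\in[\tau_a,\tau_b]$, which sums to $v(t)$ and is readily checked to be admissible. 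The crucial point is that one must \emph{not} bound the three pieces $\|w_1\|_1$, $t^{1/2}\|w_1\|_{1,1}$, $\tau_b^{1/2}\|w_2\|_\infty$ separately, since that would cost a fixed factor $3$ and destroy continuity; instead I would add them and compare with the single scalar inequality $\|v_1(s_0)\|_1+s_0^{1/2}\|v_1(s_0)\|_{1,1}+\tau_a^{1/2}\|v_2(s_0)\|_\infty\le H_{\tau_a}(v_1,v_2)$ holding at $s_0$. The discrepancies then reduce to the multiplicative factors $(t/s_0)^{1/2}-1$ and $(\tau_b/\tau_a)^{1/2}-1$, which produce $\omega$, together with the increments $\|v(t)-v(s_0)\|_1+\tau_b^{1/2}\|v(t)-v(s_0)\|_{1,1}$, which produce $\rho$ and vanish by the continuity of $v$ in $W^{1,1}$ (uniformly for $t$ in the compact interval). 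A parallel, easier comparison on $(0,\tau_a)$, where only the weight $\tau_a^{1/2}$ is replaced by $\tau_b^{1/2}$, contributes a factor of the same type. Keeping the multiplicative constant equal to $1+o(1)$ rather than a fixed constant $>1$ is the delicate step; and this freezing device is precisely what lets me avoid the compactness argument over the family of decompositions that left-continuity would otherwise seem to demand.
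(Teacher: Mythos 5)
Your proposal is correct, and parts (i) and (iii) coincide with the paper's proof (restriction of decompositions for monotonicity; the trivial decomposition $(0,v)$ for the vanishing limit). For part (ii) the core device is also the paper's: the paper proves right-continuity by exactly your freezing construction --- it selects $\hat\tau<\tau$ at which the decomposition is pointwise near-optimal, keeps $(v_1,v_2)$ on $[0,\tau]$, and sets $(\hat v_1,\hat v_2)=(v-v_2(\hat\tau),\,v_2(\hat\tau))$ on $(\tau,\tau+\eta)$, then \emph{sums} the three resulting bounds against the single scalar inequality at $\hat\tau$, precisely the point you flag as essential to keep the multiplicative constant at $1+o(1)$. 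Where you genuinely diverge is left-continuity: the paper does \emph{not} reuse the freezing device there, but instead glues countably many near-optimal decompositions, taking $(v_1^j,v_2^j)\in E_{\tau_j}(v)$ on a sequence $\tau_j\uparrow\tau$ with $\tau_1\ge(1+\eps)^{-2}\tau$ and defining $\bar v_i:=v_i^j$ on $[\tau_{j-1},\tau_j)$; the weight discrepancy $\tau^{1/2}/\tau_j^{1/2}\le 1+\eps$ then yields $h(\tau)\le(1+\eps)(\ell+\eps)$. Your symmetric treatment via the extension estimate is valid because the error terms have the right uniformity --- $\omega$ depends only on time ratios and $\rho$ only on the $W^{1,1}$-modulus of continuity of $v$ on a compact interval, not on the chosen decomposition --- a point you correctly make explicit. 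The trade-off: your route is shorter and unified (one lemma, two applications), while the paper's patching argument for the left limit uses no time-continuity of $v$ at all (only that $H_\tau$ is a supremum of pointwise-in-time quantities, so decompositions may be altered on disjoint time intervals), and is therefore more robust; under the lemma's stated hypothesis $v\in C([0,T);W^{1,1}(\Omega))$, however, both arguments are equally rigorous.
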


\begin{proof}
(i) Let $0<\tau_1<\tau_2<T$. For all $(v_1,v_2)\in \mathcal{E}_{\tau_2}(v)$, we have 
$(v_1,v_2)_{|(0,\tau_1)}\in \mathcal{E}_{\tau_1}(v)$ and $H_{\tau_1}\bigl((v_1,v_2)_{|(0,\tau_1)}\bigr)\le H_{\tau_2}(v_1,v_2)$ by \eqref{DefdecompH}, 
hence $h(\tau_1)\le h(\tau_2)$.

(ii) Let $\tau\in(0,T)$. We first check the continuity on the left. 
Set $\ell=\lim_{t\to \tau^-} h(t)$ and take any $\eps>0$. Choose an increasing sequence $(\tau_j)_{j\ge 0}$ such that
$\tau_0=0$, $\tau_1\ge (1+\eps)^{-2}\tau$ and $\lim\tau_j=\tau$. 
For each $j\ge 1$, there exists $(v_1^j,v_2^j)\in \mathcal{E}_{\tau_j}(v)$ such that
$H_{\tau_j}(v_1^j,v_2^j)\le h(\tau_j)+\eps\le \ell+\eps$.
For $i\in\{1,2\}$, we may define $\bar v_i$ on $[0,\tau)$ by $\bar v_i:=v_i^j$ on $[\tau_{j-1},\tau_j)$ for each $j\ge 1$.
Using \eqref{DefdecompH} and $\tau^{1/2}\le(1+\eps)\tau_j^{1/2}$,
we then have 
$$H_\tau(\bar v_1,\bar v_2)\le(1+\eps)\,\sup_{j\ge 1}H_{\tau_j}(v_1^j,v_2^j)\le (1+\eps)(\ell+\eps),$$
so that  in particular $(\bar v_1,\bar v_2)\in \mathcal{E}_\tau(v)$.
Consequently, $\ell\le h(\tau)\le H_\tau(\bar v_1,\bar v_2)\le (1+\eps)(\ell+\eps)$, hence
$h(\tau)=\lim_{t\to \tau^-} h(t)$.

We next check the continuity on the right.
Let $\eps>0$. Take $(v_1,v_2)\in \mathcal{E}_\tau(v)$ 
such that 
$H_\tau(v_1,v_2)\le h(\tau)+\eps$
and fix a representative of $(v_1,v_2)$, still denoted by $(v_1,v_2)$. 
By the regularity of $v$, there exists $\eta>0$ such that
$$\|v(t)-v(s)\|_1+(\tau+\eta)^{1/2}\|v(t)-v(s)\|_{1,1} \le\eps\ \hbox{ for all $t,s\in(\tau-\eta,\tau+\eta)$}$$
and we may also assume $\bigl(\ts\frac{\tau+\eta}{\tau-\eta}\bigr)^{1/2}\le 1+\eps$.
Next, by the definition of $\mathcal{E}_\tau(v)$ and $H_\tau(v_1,v_2)$, we may select 
 $\hat\tau\in(\tau-\eta,\tau)$ such that $v(\hat\tau)=v_1(\hat\tau)+v_2(\hat\tau)$ and
$$\|v_1(\hat\tau)\|_1 + \hat\tau^{1/2}\|v_1(\hat\tau)\|_{1,1}+ \tau^{1/2}\|v_2(\hat\tau)\|_\infty\le 
H_\tau(v_1,v_2).$$
We then define $(\hat v_1,\hat v_2)\in \mathcal{E}_{\tau+\eta}$ by
$(\hat v_1,\hat v_2)=(v_1,v_2)$ on $[0,\tau]$ and
$(\hat v_1,\hat v_2)=(v-v_2(\hat\tau),v_2(\hat\tau))$ on $(\tau,\tau+\eta)$.
For $t\in(\tau,\tau+\eta)$, we have 
$$\begin{aligned}
\|\hat v_1(t)\|_1
&=\|v(t)-v_2(\hat\tau)\|_1=\|v(t)-v(\hat\tau)+v_1(\hat\tau)\|_1
\le  \|v_1(\hat\tau)\|_1+\eps,\\
t^{1/2}\|\hat v_1(t)\|_{1,1}
&=t^{1/2}\|v(t)-v_2(\hat\tau)\|_{1,1}=t^{1/2}\|v(t)-v(\hat\tau)+v_1(\hat\tau)\|_{1,1} \\
&\le \bigl(\ts\frac{\tau+\eta}{\tau-\eta}\bigr)^{1/2}\hat\tau^{1/2}\|v_1(\hat\tau)\|_{1,1}+\eps
\le (1+\eps)\hat\tau^{1/2}\|v_1(\hat\tau)\|_{1,1}+\eps,\\
(\tau+\eta)^{1/2}\|\hat v_2(t)\|_\infty
&=(\tau+\eta)^{1/2}\|v_2(\hat\tau)\|_\infty
\le(1+\eps)\tau^{1/2}\|v_2(\hat\tau)\|_\infty,
\end{aligned}$$
hence
$$h(\tau+\eta)\le H_{\tau+\eta}(\hat v_1,\hat v_2)\le(1+\eps) H_\tau(v_1,v_2)+2\eps.$$
Consequently,
$h(\tau+\eta)\le (1+\eps)h(\tau)+2\eps$.
Since $h$ is nondecreasing and $\eps>0$ is arbitrary, we conclude that $\lim_{t\to \tau^+} h(t)=h(\tau)$.

(iii) Set $K=\|v\|_{L^\infty((0,T/2);C(\overline\Omega))}$. Then $h(\tau)\le H_\tau(0,v)\le K\tau^{1/2}\to 0$ as $\tau\to 0$.
\end{proof}

\begin{proof}[Proof of Proposition~\ref{lem:decomp}]
Let $c$ satisfy the assumptions of the proposition, where $\eps_0>0$ will be determined below.
In this proof $C$ will denote a generic constant depending only on~$\Omega$. 
Also, we shall abbreviate 
$$\mathcal{E}_\tau=\mathcal{E}_\tau(c)\equiv \{(c_1,c_2)\in X_\tau\times Y_\tau;\ c(t)=c_1(t)+c_2(t)\ \hbox{for a.e.~$t\in(0,\tau)$}\}.$$

Fix $\tau\in(0,T)$ and pick any $(c_1,c_2)\in \mathcal{E}_\tau$.
Using \eqref{NewRepres}, we may decompose the solution $c$ of \eqref{pbmP} as $c=\tilde c_1+\tilde c_2$, where
$$
\tilde c_i(t)=S(t)c_{0,i}+\int_0^t A(s) \cdot K_\nabla(t-s)c_i(\cdot,s)\diff \! s,\quad 0<t<\tau,\ i\in\{1,2\}.
$$
By \eqref{smooth1b}, \eqref{smooth2}, for $t\in(0,\tau)$, we have
\be\label{estimdecompBasic}
\begin{aligned}
\|\tilde c_1(t)\|_1&\le \|c_{0,1}\|_1+C\int_0^t (t-s)^{-1/2}|A(s)| \|c_1(s)\|_1 \diff \! s, \\
t^{1/2}\|\tilde c_1(t)\|_{1,1}&\le C\|c_{0,1}\|_1+Ct^{1/2}\int_0^t (t-s)^{-3/4}|A(s)| \|c_1(s)\|_{1/2,1}\diff \! s, \\
\|\tilde c_2(t)\|_\infty&\le \|c_{0,2}\|_\infty+C\int_0^t (t-s)^{-1/2}|A(s)| \|c_2(s)\|_\infty \diff \! s
\end{aligned}
\ee
 (and Lemma~\ref{lem:gaussestim} also guarantees that
 $\tilde c_2(t)\in C(\overline\Omega)$ for all $t\in(0,\tau)$).
On the other hand, since
$$A(t)=\int_{\partial\Omega}(c_1+c_2)(t)\nu \dsigma = \int_\Omega \nabla c_1(t) \dx+ \int_{\partial\Omega}c_2(t)\nu \dsigma,
\quad {\rm a.e.}~t\in (0,\tau),$$
(where the last boundary integral is well defined, owing to 
$c_2\in L^\infty(0,\tau;C(\overline\Omega))$), we have
\be\label{estimAdecomp}
|A(s)|\le C\|c_1(s)\|_{1,1}+ C\|c_2(s)\|_\infty,   \quad s\in (0,\tau).
\ee

Next recall the definition of $M_\tau, H_\tau$ in \eqref{DefdecompM}, \eqref{DefdecompH} and also set
$$L_\tau(c_1)=\sup_{t\in(0,\tau)} \|c_1(t)\|_1,\quad P_\tau(c_2)=\sup_{t\in(0,\tau)} \|c_2(t)\|_\infty.$$
Using \eqref{estimdecompBasic}, \eqref{estimAdecomp} and $\|c_1(s)\|_{1/2,1}\le \|c_1(s)\|_1^{1/2}\|c_1(s)\|_{1,1}^{1/2}$, we obtain
$$\begin{aligned}
\|\tilde c_1(t)\|_1
&\le \eps_0+CL_\tau(c_1) \sup_{t\in(0,\tau)}\int_0^t (t-s)^{-1/2}\bigl(s^{-1/2}M_\tau(c_1)+P_\tau(c_2)\bigr)  \diff \! s \\
&\le \eps_0+CL_\tau(c_1)\bigl[M_\tau(c_1)+\tau^{1/2} P_\tau(c_2)\bigr]
\le \eps_0+CH_\tau^2(c_1,c_2),
\end{aligned}$$
$$\begin{aligned}t^{1/2}\|\tilde c_1(t)\|_{1,1} 
&\le C\eps_0+CL_\tau^{1/2}(c_1) M_\tau^{1/2}(c_1) \\
&\qquad\sup_{t\in(0,\tau)}t^{1/2}\int_0^t (t-s)^{-3/4}\bigl(s^{-1/2}M_\tau(c_1)+P_\tau(c_2)\bigr) 
s^{-1/4}\diff \! s, \\
&\le C\eps_0+C\bigl(L_\tau(c_1)+M_\tau(c_1)\bigl)\bigl[M_\tau(c_1)+\tau^{1/2} P_\tau(c_2)\bigr]
\le C\eps_0+CH_\tau^2(c_1,c_2),
\end{aligned}$$
$$\begin{aligned}
\tau^{1/2}\|\tilde c_2(t)\|_\infty 
&\le R\tau^{1/2}+C\tau^{1/2}P_\tau(c_2)\sup_{t\in(0,\tau)}\int_0^t (t-s)^{-1/2}\bigl(s^{-1/2}M_\tau(c_1)+P_\tau(c_2)\bigr)  \diff \! s\\
&\le R\tau^{1/2}+C\tau^{1/2}P_\tau(c_2)\bigl[M_\tau(c_1)+\tau^{1/2} P_\tau(c_2)\bigr]
\le R\tau^{1/2}+CH_\tau^2(c_1,c_2).
\end{aligned}$$
If follows in particular that $(\tilde c_1,\tilde c_2)\in X_\tau\times Y_\tau$, hence $(\tilde c_1,\tilde c_2)\in \mathcal{E}_\tau$ and,
adding up, we get
$$ 
h(\tau):=\mathcal{N}_\tau(c)\le H_\tau(\tilde c_1,\tilde c_2)\le C\eps_0+R\tau^{1/2}+CH_\tau^2(c_1,c_2).$$
Taking infimum over $(c_1,c_2)\in \mathcal{E}_\tau$, we get, for some $C_1=C_1(\Omega,a)>0$,
\be\label{phitau1}
h(\tau)\le C_1\eps_0+C_1h^2(\tau),\quad 0<\tau<\min(T,\tau_0),
\quad\hbox{where $\tau_0=\eps_0 R^{-2}$.}
\ee

Let now $J=\{\tau\in(0,T);\ h(\tau)\le 2C_1\eps_0\}$. 
Owing to the regularity of $c$, the assumptions of Lemma~\ref{lem:decomp0} are satisfied by $v=c$.
By properties (i)(iii) in that lemma, $J$ is a nonempty interval 
with left endpoint $0$. 
Now choose $\eps_0=(10\,C_1^2)^{-1}$ and assume for contradiction that $\bar \tau:=\sup J<\tau_0$.
Then, by \eqref{phitau1} and the continuity of $h$ 
(cf.~Lemma~\ref{lem:decomp0}(ii)), we have
$h(\bar\tau)=2C_1\eps_0\le C_1\eps_0+4C_1^3\eps_0^2$
hence $1\le 4C_1^2\eps_0=1/2$: a contradiction.
Thus \eqref{estimdecomp1} is proved.

Finally, to deduce property \eqref{estimdecomp2} from \eqref{estimdecomp1},
for any $\tau<\min(\eps_0R^{-2},T)$, 
we pick $(c_1,c_2)$ such that $\mathcal{N}_\tau(c) \le 2H_\tau(c_1,c_2)$. 
By Sobolev imbedding and H\"older's inequality, we then have
$$\|c(t)\|_p\le \|c_1(t)\|_p+\|c_2(t)\|_p\le C(\|c_1(t)\|_{1,1}+\|c_2(t)\|_\infty\bigr)\le CC_0t^{-1/2},
\quad 0<t<\tau.$$
\end{proof}

\begin{proof}[Proof of Theorem~\ref{thm2sharp}]
Consider the entropy function $\phi(t):=\int_\Omega (c\log c+1)\dx>0$.
For a.e.~$t\in (0,T^*)$, 
using Lemma~\ref{lemFunctionals}(i), 
the Cauchy-Schwarz inequality and $M\le 1$, we have
\be\label{entropydissip}
\begin{aligned}
\phi'(t)
		&=  -\int_\Omega c^{-1}|\nabla c|^2 \dx  + \left|\int_{ \Omega}\nabla c\dx\right|^2 \\
		&\le -\int_\Omega c^{-1}|\nabla c|^2 \dx+\int_\Omega c \dx  \int_\Omega c^{-1}|\nabla c|^2 \dx
=(M-1) \int_\Omega c^{-1}|\nabla c|^2 \dx \le 0.
		\end{aligned}
		\ee
		Fix some $T_0\in (0,T^*)$. Since $c$ is a classical solution for $t\in(0,T^*)$, we deduce that
$$\int_\Omega (1+c\log c)(t)\le K:=\phi(T_0)<\infty,\quad T_0\le t<T^*.$$

Next, in view of applying Proposition~\ref{lem:decomp}, we decompose $c=c_1+c_2$, with 
 $c_1=c\chi_{\{c>R\}}$, $c_2=c\chi_{\{c\le R\}}$,
where $R>1$ will be determined below.
For each $t_0\in [T_0,T^*)$, we have $(c_1(t_0),c_2(t_0))\in L^1(\Omega)\times C(\overline\Omega)$,
with $\|c_2(t_0)\|_\infty\le R$ and
$$\|c_1(t_0)\|_1\le \frac{1}{\log R}\int_{\{c(\cdot,t_0)>R\}} (c\log c)(t_0)\dx
\le \frac{1}{\log R}\int_\Omega (1+c\log c)(t_0)\dx\le \frac{K}{\log R}.$$
Now we choose $R>e^{K/\eps_0}$, where $\eps_0$ is given by Proposition~\ref{lem:decomp}.
Applying  Proposition~\ref{lem:decomp} (after a time shift, taking $t_0$ as new time origin), it follows that
$$\|c(t_0+s)\|_p \le \bar C_0s^{-1/2},\quad 0<s<\min(\eps_0R^{-2},T^*-t_0),\ T_0\le t_0<T^*.$$
Let $s_0=\frac12\min(\eps_0R^{-2},T^*-T_0)$ and $T_1=T_0+s_0<T^*$. For any $t\in[T_1,T^*)$, applying this with 
$t_0:=t-s_0$ yields
$$\|c(t)\|_p \le \bar C_0s_0^{-1/2},\quad T_0\le t<T^*.$$
We deduce from Theorem~\ref{thm1}(iii) that $T^*=\infty$.
Also, by \eqref{estimciHolderC} or \eqref{estimciHolderCcyl}, we have
$$
\|c\|_{C^{1+\alpha,\alpha/2}(\overline\Omega\times[1,\infty))}<\infty.
$$
Finally, if $M<1$, then \eqref{entropydissip} guarantees that
$\int_{T_1}^\infty \int_{\Omega} |\nabla c^{1/2}|^2 \dx \dt<\infty$
and \eqref{stabilc1} follows similarly as in the proof of Theorem~\ref{thm2b} 
or \ref{thm1aneg}.
\end{proof}

\section{Proof of Theorem~\ref{thmBU}} \label{sec:blowup}

We prove the result for $n\ge 2$ (the case $n=1$ is easier and can be handled by very minor changes).
We split the (regular part of the) boundary of the cylinder $\Omega=(0,L)\times B'_R$ in three pieces, denoting 
$$\Sigma_1=\partial\Omega\cap\{x_1=0\},\quad \Sigma_2=\partial\Omega\cap\{x_1=L\},\quad 
\Sigma_3=\partial\Omega\cap\{0<x_1<L\}.$$
Recalling 
\be\label{defaA0}
A(t)=\int_{\partial\Omega}c^m\nu \dsigma=\int_\Omega \nabla c^m \dx,
\ee
we denote
\be\label{defaA}
a(t)=e_1\cdot A(t)=\int_\Omega (c^m)_{x_1} \dx
=\int_{\Sigma_2}c^m \dsigma-\int_{\Sigma_1}c^m \dsigma.
\ee
We shall use the following monotonicity property.

\begin{prop}\label{lemmonotcyl}
Assume $m\ge 1$, $f(c)=c^m$ and \eqref{hypcyl1}-\eqref{hypcyl3}.

\begin{itemize}[topsep=-1pt]\setlength\itemsep{-1pt}

\item[(i)] Let $n\ge 2$. For each $t\in (0,T^*)$,  $c(\cdot,t)$ is axisymmetric with respect to $e_1$.
\smallskip

\item[(ii)] If $c_{0,x_1}\le 0$ and $c_{0,x_1}\not\equiv 0$, then $c_{x_1}<0$ in $\Omega\times(0,T^*)$.
\end{itemize}
\end{prop}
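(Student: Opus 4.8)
The plan is to prove (i) by a uniqueness-plus-symmetry principle, and then to deduce (ii) by applying a parabolic maximum principle to $w:=\partial_{x_1}c$; the real difficulty in (ii) is that the boundary data for $w$ on the end faces are \emph{not} a priori sign-definite, their sign being tied to the sign of the (unknown) scalar $a(t)$, which is itself what the monotonicity is meant to control.

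For (i), let $\tilde R$ be any rotation of $\R^n$ fixing the $e_1$-axis and set $c^R(x,t)=c(\tilde R^{-1}x,t)$. Since $\Omega=(0,L)\times B'_R$ is invariant under $\tilde R$ and the Laplacian is rotation invariant, and since the convective field transforms as $A[c^R](t)=\tilde R\,A[c](t)$ (change of variables on $\partial\Omega$, using $\nu(\tilde Rx)=\tilde R\,\nu(x)$), a direct computation shows that $c^R$ solves \eqref{pbmP} with initial datum $c_0\circ\tilde R^{-1}=c_0$ by \eqref{hypcyl2}. The uniqueness statement (Proposition~\ref{PropUniqLp}, with $p=m$) then gives $c^R\equiv c$ for every such $\tilde R$, i.e.\ axisymmetry.

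For (ii), I first use (i) to reduce the geometry: axisymmetry forces the $x'$-components of $\int_{\partial\Omega}c^m\nu\,\dsigma$ to vanish (by oddness over the cross-sections on $\Sigma_3$, and since $\nu=\mp e_1$ on $\Sigma_1,\Sigma_2$), so $A(t)=a(t)e_1$ with $a(t)=\int_\Omega(c^m)_{x_1}\dx$ as in \eqref{defaA}. Hence $c$ solves $\partial_t c=\Delta c-a(t)\partial_{x_1}c$, and differentiating in $x_1$ (legitimate by interior smoothness and the up-to-boundary $C^{2+\alpha,1+\alpha/2}$ regularity of Lemma~\ref{lem:Holdersmoothin}) shows that $w=\partial_{x_1}c$ solves the linear problem
\[
\partial_t w=\Delta w-a(t)\partial_{x_1}w\ \text{ in }\Omega,\qquad
w=a(t)c\ \text{ on }\Sigma_1\cup\Sigma_2,\qquad
\partial_\nu w=0\ \text{ on }\Sigma_3,
\]
with $w(\cdot,0)=c_{0,x_1}$. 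The relation on $\Sigma_1,\Sigma_2$ comes from the boundary condition $\partial_\nu c=(A\cdot\nu)c$, which reads $\partial_{x_1}c=a(t)c$ there since $\nu=\mp e_1$; the homogeneous Neumann condition on $\Sigma_3$ follows by differentiating $\partial_\nu c=0$ along the tangential direction $x_1$. The heart of the proof is then a continuity (bootstrap) argument coupling the signs of $a$ and $w$. One checks $a(0)=\int_{\Sigma_2}c_0^m-\int_{\Sigma_1}c_0^m<0$ strictly, since \eqref{hypcyl3} gives $c_0(L,x')\le c_0(0,x')$ with strict inequality on a set of positive $x'$-measure. Put $t_0=\inf\{t:a(t)>0\}$ and suppose $t_0<T^*$; then $a\le0$ on $[0,t_0]$ with $a(t_0)=0$. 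On $[0,t_0]$ we have $w(\cdot,0)\le0$ and $w=a(t)c\le0$ on $\Sigma_1\cup\Sigma_2$ (using $c>0$ from Theorem~\ref{thm1}(ii)); as the operator has no zeroth-order term, the maximum principle together with Hopf's lemma on the Neumann face $\Sigma_3$ forces $w\le0$ — a positive maximum could lie neither in $\Omega$, nor on $\overline{\Sigma_1\cup\Sigma_2}$ (where $w\le0$), nor on the relatively open $\Sigma_3$ — and the strong maximum principle upgrades this to $w<0$ in $\Omega\times(0,t_0)$ because $c_{0,x_1}\not\equiv0$. But then $c(\cdot,t_0)$ is nonincreasing in $x_1$, so $a(t_0)\le0$; and $a(t_0)=0$ would force $\int_0^Lw(\cdot,t_0)\,dx_1\equiv0$, hence $w(\cdot,t_0)\equiv0$, which by the strong maximum principle on $\Omega\times(0,t_0]$ contradicts the strict negativity for $t<t_0$. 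This contradiction yields $t_0=T^*$, so $a\le0$ throughout, and the same maximum-principle argument then gives $w<0$ on all of $\Omega\times(0,T^*)$.

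I expect the main obstacle to be exactly this self-referential coupling: the values of $w$ on $\Sigma_1,\Sigma_2$ carry the sign of $a(t)$, whereas the sign of $a(t)$ is governed by the monotonicity $w<0$ that we are trying to establish. Closing this loop by the continuity argument above — rather than by any single application of the maximum principle — is the crux. The corner set where $\Sigma_3$ meets $\Sigma_1,\Sigma_2$ is harmless, since it lies in $\overline{\Sigma_1\cup\Sigma_2}$, where the bound $w\le0$ already holds by continuity.
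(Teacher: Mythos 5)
Your proof of part (i) is exactly the paper's argument (rotation invariance of the problem plus uniqueness of the maximal solution), so nothing to add there. For part (ii) you take a genuinely different route: the paper sidesteps the sign-coupling loop you describe by introducing the auxiliary problem \eqref{pbmPhat}, in which the drift is replaced by $b(t)e_1$ with $b(t)=\bigl|\int_\Omega \partial_{x_1}\hat c^m \dx\bigr|\ge 0$. For that problem the boundary values of $z=\partial_{x_1}\hat c$ on the flat faces are $z=-b(t)\hat c\le 0$ (see \eqref{zSigma1}), a sign that is definite for \emph{all} times without any bootstrap; a single Stampacchia argument then gives $z\le 0$, after which $b(t)=-\int_\Omega\partial_{x_1}\hat c^m\dx$ shows that $\hat c$ solves the original problem and coincides with $c$ by uniqueness. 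Your alternative — exploiting the strict inequality $a(0)<0$ and running a continuity argument on $t_0=\inf\{t:\,a(t)>0\}$, closed by the strong maximum principle at $t_0$ — is a legitimate strategy that avoids the detour through an auxiliary problem.

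There is, however, a genuine gap at $t=0$. Every step of your bootstrap — the assertion that $t_0>0$, the use of the initial condition $w(\cdot,0)=c_{0,x_1}\le 0$ in the maximum principle, and the contradiction ``$w\equiv 0$ on a slab contradicts $c_{0,x_1}\not\equiv 0$'' — requires that $a(t)\to a(0)$ and $\partial_{x_1}c(t)\to c_{0,x_1}$ (in some norm) as $t\to 0^+$. Neither is supplied by what you cite: Theorem~\ref{thm1} gives only $c\in C([0,T^*);L^p(\Omega))$ at $t=0$, which controls neither gradients nor boundary traces, and Lemma~\ref{lem:Holdersmoothin} gives bounds only on $[\eps,T]$ with $\eps>0$, i.e.\ away from $t=0$. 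If $a(t)>0$ along some sequence $t\to 0^+$ — which your cited regularity cannot exclude — then $t_0=0$ and your argument never starts; and since $t=0$ is the only time at which monotonicity is known, you cannot relaunch the bootstrap from any positive time. This is precisely where the paper invests its real technical work: it proves $\hat c\in C([0,\hat T);W^{1,q}(\Omega))$ for all finite $q$ (property \eqref{zSigma0}, obtained from the mild formulation \eqref{NewRepres} and the smoothing estimates \eqref{zSigma01}--\eqref{zSigma03b}), which yields both continuity of the boundary traces (hence of $a$) and $L^q$-continuity of $\partial_{x_1}c$ at $t=0$. Note also that a general $c_0$ satisfying \eqref{hypcyl2}--\eqref{hypcyl3} obeys no compatibility condition on $\partial\Omega$, so uniform convergence $\nabla c(t)\to\nabla c_0$ can actually fail; consequently the pointwise maximum principle you invoke (which needs $w$ continuous on $\overline\Omega\times[0,t_0]$) should be replaced by the $L^2$/Stampacchia version, for which the $W^{1,q}$-continuity at $t=0$ is again exactly the required input. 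Once you add this step (the same computation as in \eqref{zSigma0}, applied to $c$ itself with drift $-a(t)e_1$), your proof closes.
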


\begin{proof}
(i) This is an immediate consequence of the uniqueness of the maximal solution.

\smallskip

(ii) We prove the result for $n\ge 2$.
We shall show that $c_{x_1}\le 0$.
To this end we need only consider the case when 
\be\label{c0pos}
\min_{\overline\Omega}c_0>0.
\ee
Indeed, then considering $c_{0,j}=c_0+j^{-1}$ and passing to the limit via the continuous dependence property in 
Proposition~\ref{locexist-cyl}(ii),
it will follow that $c$ is nonincreasing in the direction~$x_1$.

As a consequence of (i), we have $\int_\Omega \partial_{x_i} c^m\dx=0$ for each $i\in\{2,\dots,n\}$, hence
\be\label{defaA02}
A(t)= e_1\int_\Omega \partial_{x_1} c^m \dx.
\ee
Consider the auxiliary problem
\begin{subequations}\label{pbmPhat}
\begin{align}
	\hfill\partial_t \hat c&=\nabla\cdot\Bigl(\nabla \hat c+\Bigl|\int_\Omega \partial_{x_1} \hat c^m \dx\Bigr| \,\hat c \,e_1\Bigr),
	 &&x\in\Omega,\ t>0,\\
	\hfill 0&=\Bigl(\nabla \hat c+\Bigl|\int_\Omega \partial_{x_1} \hat c^m\dx\Bigr| \,\hat c \,e_1\Bigr)\cdot\nu,&&x\in\partial\Omega,\ t>0, \\
	\hfill \hat c(x,0)&=c_0(x),&&x\in\Omega.
\end{align}
\end{subequations}
By Proposition~\ref{locexist-cyl} and Remark~\ref{remPsi},
problem~\eqref{pbmPhat} has a unique maximal classical solution 
$\hat c\in C(\overline\Omega\times[0,\hat T))$ such that
\be\label{regulCalpha0}
\hat c\in C_{loc}^{2+\alpha,1+\alpha/2}((\Omega\cup\Gamma)\times(0,\hat T))
\cap C_{loc}^{1+\alpha,\alpha/2}(\overline\Omega\times(0,\hat T))
\cap C_{loc}^{\alpha,\alpha/2}(\overline\Omega\times(0,\hat T))
\ee
for some $\alpha\in(0,1)$, where $\hat T$ denotes its existence time.
Now set $z=\partial_{x_1}\hat c$. 
We have 
\be\label{regulCalpha0z}
z\in C_{loc}^{1+\alpha,\alpha/2}((\Omega\cup\Gamma)\times(0,\hat T))
\cap C_{loc}^{\alpha,\alpha/2}(\overline\Omega\times(0,\hat T)),
\ee
 owing to \eqref{regulCalpha0}.
By (\ref{pbmPhat}a), \eqref{regulCalpha0} and interior parabolic regularity, we get
$z\in C^{2,1}(\Omega\times(0,\hat T))$ and $z$ satisfies
\be\label{zPDE}
z_t-\Delta z=b(t) \partial_{x_1}z \ \hbox{ in $\Omega\times(0,\hat T)$, \quad where } 
b(t)=\Bigl|\int_\Omega \partial_{x_1} \hat c^m \dx\Bigr|.
\ee
We shall show $z\le 0$ by the Stampacchia maximum principle argument.

On the flat boundary parts, we have:
\be\label{zSigma1}
z=-b(t)\hat c\quad\hbox{on $(\Sigma_1\cup \Sigma_2)\times(0,\hat T)$.}
\ee
As for the boundary part $\Sigma_3\times(0,\hat T)$, where $\nu=(0,R^{-1}x')\perp e_1$, we have
$\hat c_\nu(x_1,x',t)=(0,R^{-1}x')\cdot\nabla\hat c(x_1,x',t)=0$, hence $z_\nu(x_1,x',t)=(0,R^{-1}x')\cdot\nabla(\partial_{x_1})\hat c(x_1,x',t)=0$,
so that
\be\label{zSigma2}
z_\nu=0\quad\hbox{on $\Sigma_3\times(0,\hat T)$.}
\ee
In particular, by \eqref{zPDE} and interior-boundary parabolic regularity (outside of the nonsmooth part of $\partial\Omega$), it follows that
\be\label{zPDEregul}
z\in C^{2,1}((\Omega\cup\Sigma_3)\times(0,\hat T)).
\ee

Since \eqref{zPDEregul} does not guarantee the integrability of $\Delta z$ in $\Omega$, we shall integrate on $\Omega_\eta:=(\eta,L-\eta)\times B'_R$ for $\eta>0$.
Denote $\Sigma'_\eta=\{\eta,L-\eta\}\times B'_R$ and $\Sigma''_\eta=(\eta,L-\eta)\times \partial B'_R$.
Fix $0<\eps<T<\hat T$.
Since $\hat c>0$ in $\overline\Omega\times[0,\hat T)$ 
by \eqref{c0pos} and Proposition~\ref{locexist-cyl}, it follows from 
\eqref{regulCalpha0z} and \eqref{zSigma1} that, for all $\eta>0$ sufficiently small,
$z\le 0$ on $\Sigma'_\eta\times(\eps,T)$.
Now multiplying \eqref{zPDE} by $z_+$, integrating by parts
and using \eqref{zSigma2}, we have, for a.e.~$t\in(\eps,T)$,
$$\begin{aligned}	
\frac12\frac{\diff}{\dt}\int_{\Omega_\eta} z_+^2(t)\dx
&=\int_{\Omega_\eta} (\Delta z)z_+\dx +b(t)\int_{\Omega_\eta} (\partial_{x_1}z)z_+\dx\\
&=-\int_{\Omega_\eta} |\nabla z_+|^2\dx+\int_{\Sigma'_\eta} z_\nu z_+\dsigma +\int_{\Sigma''_\eta} z_\nu z_+\dsigma+b(t)\int_{\Omega_\eta} (\partial_{x_1}z_+)z_+\dx\\
&\le -\int_{\Omega_\eta} |\nabla z_+|^2\dx +\int_{\Omega_\eta} |\partial_{x_1}z_+|^2\dx +b^2(t)\int_{\Omega_\eta} z_+^2\dx
\le b^2(t)\int_{\Omega_\eta} z_+^2\dx.
\end{aligned}	$$
Observe that $z\in C([0,\hat T);L^2(\Omega))$ by \eqref{regulH1pApp},
$z(\cdot,0)\le 0$ by assumption, and $B:=\sup_{t\in(0,T)}|b|$ $<\infty$.
Integrating in time, we get $\int_{\Omega_\eta} z_+^2(t)dx\le e^{2BT^2}\int_{\Omega_\eta} z_+^2(\eps)dx$.
Letting $\eta\to 0$ and then $\eps\to 0$, we deduce that $z\le 0$ in $\Omega\times(0,\hat T)$.

It follows in particular that, for all $t\in (0,\tilde T)$,
$b(t)=-\int_\Omega \partial_{x_1} \hat c^m \dx$ so that,
recalling \eqref{defaA0} and \eqref{defaA02},
$\hat c$ solves the original problem \eqref{pbmP} on $(0,\hat  T)$,
hence $\hat T\le T^*$ and $\hat c=c$ on $(0,\hat  T)$ by uniqueness. Finally, we cannot have $\hat T<T^*$, since otherwise
$\hat T<\infty$ and $\limsup_{t\to \hat T} \|\hat c(t)\|_\infty=\limsup_{t\to \hat T} \|c(t)\|_\infty<\infty$: a contradiction.
We have thus proved that $c_{x_1}=z\le 0$ on $(0,T^*)$.
In view of \eqref{zPDE}, the assertion then follows from the strong maximum principle.
\end{proof}

\begin{proof}[Proof of Theorem \ref{thmBU}]
{\bf Step 1.} {\it Auxiliary functional.}
For $t\in(0,T^*)$, we set 
$$\phi(t)=\int_\Omega  x_1c\dx\ge 0.$$
Using $\int_\Omega c=M$ and $(\nabla c-A(t)c)\cdot\nu=0$ a.e.~on the boundary, we have
\be\label{phiprime1}
\begin{aligned}	
\phi'
&=\int_\Omega x_1c_t
 =\int_\Omega x_1\nabla\cdot(\nabla c-A(t)c) \dx \\
&=-\int_\Omega e_1\cdot(\nabla c-A(t)c) \dx
=-\int_\Omega c_{x_1}\dx+Ma(t).
\end{aligned}	
\ee

{\bf Step 2.} {\it Case $m=1$.} 
Since $c_{x_1}\le 0$, we note that
\be\label{phiprime1a}
\int_{\Sigma_1}c(t)d\sigma
\ge L^{-1}\int_0^L \Bigl(\int_{|x'|<R}c(x_1,x',t)\dx'\Bigr)\dx_1
= L^{-1} \int_\Omega c\dx=ML^{-1}
\ee
and
\be\label{phiprime1b}
\begin{aligned}	
\int_{\Sigma_2}c(t)d\sigma
&=2L^{-2}\int_0^L x_1\Bigl(\int_{\Sigma_2}c(t)d\sigma\Bigr)\dx_1 \\
&\le 2L^{-2}\int_0^L x_1\Bigl(\int_{|x'|<R}c(x_1,x',t)\dx'\Bigr)\dx_1\le 2L^{-2}\phi(t),
\end{aligned}	
\ee
hence, by \eqref{defaA},
\be\label{phiprime2}
-a(t)\ge L^{-1}(M-2L^{-1}\phi(t)).
\ee
Since $M>1$, \eqref{phiprime1} and \eqref{phiprime2} yield
\be\label{phiprime3}
\phi'=(M-1)a(t)\le -L^{-1}(M-1)[M-2L^{-1}\phi(t)].
\ee
If
\be\label{phiprime4}
\phi(0)<ML/2,
\ee
then \eqref{phiprime3} implies that $\phi$ is nonincreasing in time, hence
$$\phi'\le -L^{-1}(M-1)[M-2L^{-1}\phi(0)]<0,\quad 0<t<T^*.$$
But since $\phi\ge 0$, the latter necessarily implies
$$T^*\le \frac{\phi(0)}{L^{-1}(M-1)[M-2L^{-1}\phi(0)]}<\infty.$$
To complete the proof in the case $m=1$ it thus suffices to check that \eqref{phiprime4}
holds under our assumptions on $c_0$.
To this end, using the change of variable $z=(x_1/L)^2$, we observe that
\be\label{phiprime5}
\begin{aligned}	
\phi(0)&=
\int_0^L x_1\Bigl(\int_{|x'|<R}c_0(x_1,x')\dx'\Bigr)\dx_1
=\frac{L^2}{2}\int_0^1 \Bigl(\int_{|x'|<R}c_0(L\sqrt z,x')\dx'\Bigr)\dz \\
&\le\frac{L^2}{2}\int_0^1 \Bigl(\int_{|x'|<R}c_0(Lz,x')\dx'\Bigr)\dz \\
&=\frac{L}{2}\int_0^L \Bigl(\int_{|x'|<R}c_0(x_1,x')\dx'\Bigr)\dx_1=\frac{ML}{2},
\end{aligned}	
\ee
since $c_0$ is nonincreasing w.r.t.~$x_1$. We claim that the inequality in \eqref{phiprime5} is strict.
Indeed, otherwise, since $c_0$ is nonincreasing w.r.t.~$x_1$, we have
$c_0(Lz,x')=c_0(L\sqrt z,x')$ for all $z\in [0,1]$ and $x'\in B'_R$. 
It follows that $c_0(x_1,x')=c_0(L\rho^{2^i},x')$ for all $x_1\in[L\rho^{2^i},L\rho]$, $x'\in B'_R$, $\rho\in(0,1)$ and $i\ge 0$ integer.
Letting $i\to\infty$, we get $c_0(x_1,x')\equiv c_0(0,x')$ on $[0,L\rho]$ for all $\rho\in(0,1)$, hence $\partial_{x_1}c_0\equiv 0$: 
a contradiction with our assumption.

{\bf Step 3.} {\it Case $m>1$.} 
In this case, \eqref{phiprime1} yields
$$\phi'=\int_{\Sigma_1}c(t)d\sigma+M\int_{\Sigma_2}c^m(t)d\sigma-\int_{\Sigma_2}c(t)d\sigma-M\int_{\Sigma_1}c^m(t)d\sigma.$$
To control the second term, we use estimate \eqref{estimKx1} from Theorem~\ref{BUprofileaisym}, that we will prove independently below
(note that \eqref{estimKx1} holds {\it without} assuming $T^*<\infty$).
Dropping the third term and using H\"older's inequality for the last term, it follows that
\be\label{cmell0}
\phi'\le\int_{\Sigma_1}c(t)d\sigma+M(M_0L^{-1})^m|B'_R|
-M|B'_R|^{1-m}\Bigl(\int_{\Sigma_1}c(t)d\sigma\Bigr)^m.
\ee

Pick $\ell\in(0,L/2]$ to be fixed later. Take $t\in(0,T^*)$.
If $\phi(t)\le \ell M/2$, then we have
$$\begin{aligned}	
\int_{\Sigma_1}c(t)d\sigma
&\ge \ell^{-1}\int_0^\ell \Bigl(\int_{|x'|<R}c(x_1,x',t)\dx'\Bigr)\dx_1\\
&=  \ell^{-1}\Bigl\{M-\ell^{-1}\int_\ell^L x_1\Bigl(\int_{|x'|<R}c(x_1,x',t)\dx'\Bigr)\dx_1\Bigr\}
\ge  \ell^{-1}(M-\ell^{-1}\phi(t)),
\end{aligned}$$
hence
\be\label{cmell1}
\int_{\Sigma_1}c(t)d\sigma\ge  M\ell^{-1}/2.
\ee
Assume $\ell \le (4|B'_R|)^{-1}ML/M_0$. If $\phi(t)\le \ell M/2$ then
$\int_{\Sigma_1}c(t)d\sigma\ge M\ell^{-1}/2\ge 2^{1/m}(M_0L^{-1})|B'_R|$, hence
$$M|B'_R|^{1-m}\Bigl(\int_{\Sigma_1}c(t)d\sigma\Bigr)^m\ge 2M(M_0L^{-1})^m|B'_R|,$$
so that \eqref{cmell0} implies
\be\label{cmell2}
\begin{aligned}	
\phi'(t)
&\le\int_{\Sigma_1}c(t)d\sigma-\frac{M}{2}|B'_R|^{1-m}\Bigl(\int_{\Sigma_1}c(t)d\sigma\Bigr)^m \\
&=-\left\{\frac{M}{2} |B'_R|^{1-m}\Bigl(\int_{\Sigma_1}c(t)d\sigma\Bigr)^{m-1}-1\right\}\int_{\Sigma_1}c(t)d\sigma.
\end{aligned}
\ee

Now choose 
$$\ell=\min\left\{\frac{L}{2},(4|B'_R|M_0)^{-1}ML,2^{-(m+1)/(m-1)} |B'_R|^{-1}M^{m/(m-1)}\right\},$$
hence in particular 
$(M/2)^m \ge 2(|B'_R|\ell)^{m-1}$.
Setting $K:=\ell M/2$, it follows from \eqref{cmell1}, \eqref{cmell2} that, for all $t\in(0,T^*)$,
$$\phi(t)\le K\Longrightarrow \phi'(t)
\le-\Bigl\{(M/2)^m (|B'_R|\ell)^{1-m}-1\Bigr\}M\ell^{-1}/2\le-M\ell^{-1}/2<0.$$
Since $\phi\ge 0$ (and $\phi$ is continuous at $t=0$), we thus have $T^*<\infty$ whenever $\phi(0)<K$.
\end{proof}

\section{Proof of results on blow-up asymptotics} \label{sec:blowup:asym}

\begin{proof}[Proof of Theorem~\ref{BUprofileaisym}]

(i) Set 
$$\mu(x',t)=\int_0^L c(x_1,x',t)\dx_1,\quad (x,t)\in(0,T^*)\times B'_R.$$
Denote $\Delta_{x'}=\sum^n_{i=2}\partial^2_{x_i}$. For $(x',t)\in(0,T^*)\times B'_R$, we have
$$\begin{aligned}	
\mu_t-\Delta_{x'}\mu&=\int_0^L \bigl(c_t-\Delta_{x'}\bigr)(x_1,x',t)\dx_1
=\int_0^L  \bigl(\partial^2_{x_1}c-(A(t)\cdot e_1)\partial_{x_1}c\bigr)(x_1,x',t)\dx_1\\
&=\Bigl[\bigl(\partial_{x_1}c-(A(t)\cdot e_1)c\bigr)(x_1,x',t)\Bigr]_{x_1=0}^{x_1=L}
=e_1\cdot\Bigl[\bigl(\nabla c-A(t)c\bigr)(x_1,x',t)\Bigr]_{x_1=0}^{x_1=L}=0.
\end{aligned}$$
On the other hand, pick any $(x',t)\in\partial B'_R\times(0,T^*)$ and
denote by $\nu'$ the outer normal vector to $\partial B'_R$ at $x'$.
For each $x_1\in(0,L)$, the outer normal vector $\nu$ to $\partial\Omega$ at $(x_1,x')$ satisfies $\nu=(0,\nu')\perp e_1$,
hence $c_\nu(x_1,x',t)=(e_1\cdot\nu)A(t)c(x_1,x',t)=0$. 
Therefore, $\nu'\cdot\nabla_{x'}\mu(x',t)=\int_0^L \nu\cdot\nabla c(x_1,x',t)dx_1=0$.
Consequently, $\mu$ solves the heat equation with homogeneous Neumann boundary conditions.
It follows from the maximum principle that
$$\mu(x',t)\le \|\mu(\cdot,0)\|_{L^\infty(B'_R)}=\Bigl\|\int_0^L c_0(x_1,\cdot)\dx_1\Bigr\|_{L^\infty(B'_R)}=M_0.$$
Since $c_{x_1}\le 0$, we deduce that
$$x_1c(x_1,x',t)\le\int_0^{x_1} c(y,x',t)\dy\le \mu(x',t)\le M_0,$$
hence the assertion.

(ii) Since $c\ge 0$ and $c_x\le 0$ by Proposition~\ref{lemmonotcyl}, we first note that
\be\label{boundmonot}
xc(x,t)\le \int_0^x c(y,t)\dy \le \|c(t)\|_1=\|c_0\|_1.
\ee
Next we introduce the auxiliary functional
$$\phi:=c_x+c^{m+1}-Nc,\quad N=(L^{-1}\|c_0\|_1)^m. $$
We have $(c_x-Nc)_t - (c_x-Nc)_{xx} = -a(t) (c_x-Nc)_x$ and
$$(c^{m+1})_t - (c^{m+1})_{xx} = (m+1)c^m(c_t - c_{xx}) - m(m+1)c^{m-1}(c_x)^2 
\le  -a(t)(c^{m+1})_x$$
hence
$$\phi_t - \phi_{xx}\le  -a(t) \phi_x,$$
with
$$\phi= a(t)c+c^{m+1}-Nc= 
[c^m-c^m(0,t)+c^m(L,t)-N]c\le 0\ \hbox{ on $\{0,L\}\times(0,T^*)$,}$$
owing to \eqref{boundmonot}.
By the maximum principle, we deduce that 
$\phi \le C_1:=\max_{[0,L]}\phi_+(0,x)$, hence
\be\label{supersolprofile}
 c_x+c^{m+1}\le Nc+C_1 \quad x\in(0,L),\ t\in(0,T^*).
 \ee
By direct computation, if the constant $\lambda$ is sufficiently large, then
$\psi(x):=(mx)^{-1/m}+\lambda$ satisfies 
$$\begin{aligned}
\psi_x+\psi^{m+1}
&= -(mx)^{-(m+1)/m}+(mx)^{-(m+1)/m}\bigl[1+\lambda(mx)^{1/m}\bigr]^{m+1} \\
& \ge -(mx)^{-(m+1)/m}+(mx)^{-(m+1)/m}\bigl[1+(m+1)\lambda(mx)^{\frac{1}{m}}+\ts\frac{m(m+1)}{2}\lambda^2(mx)^{\frac{2}{m}}\bigr] \\
&=(m+1)\lambda\bigl[(mx)^{-1}+\ts\frac{m}{2}\lambda(mx)^{(1-m)/m}\bigr] \\
&\ge (m+1)\lambda\bigl[(mx)^{-1}+\ts\frac{1}{2}\lambda m^{1/m}L^{(1-m)/m}\bigr] \\
& \ge N[(mx)^{-1/m}+\lambda]+C_1=N\psi+C_1
\end{aligned}$$
on $[0,L]$, hence $\psi$ is a supersolution of \eqref{supersolprofile}, which tends to $\infty$ as $x\to 0$.
The conclusion follows.
\end{proof}

\begin{proof}[Proof of Theorem~\ref{thmBUrate}]
We modify an argument from \cite{W82} (see also \cite{FQ} for a related problem
with local nonlinear boundary conditions).
Let $t_0\in(0,T^*)$. By \eqref{NewRepres} and \eqref{smooth2}, for all $t\in(0,T^*)$, we have
$$\begin{aligned}
\|c(t_0+t)\|_\infty
&\le \|c(t_0)\|_\infty +C\int_0^t (t-s)^{-1/2}|A(t_0+s)|\|c(t_0+s)\|_\infty\diff \! s\\
&\le \|c(t_0)\|_\infty +C\int_0^t (t-s)^{-1/2}\|c(t_0+s)\|_\infty^{m+1}\diff \! s,
\end{aligned}$$
where $C=C(\Omega,m)>0$. 
Since $T^*<\infty$ (hence $c(t_0)\not\equiv 0$), we may set
$$T=\inf \bigl\{\sigma\in(t_0,T^*);\, \|c(\sigma)\|_\infty\ge 2\|c(t_0)\|_\infty\bigr\}\in(t_0,T^*).$$
Then
$$\begin{aligned}
2\|c(t_0)\|_\infty
&=\|c(T)\|_\infty\le \|c(t_0)\|_\infty +C(2\|c(t_0)\|_\infty)^{m+1}\int_0^{T-t_0} (T-t_0-s)^{-1/2}\diff \! s \\
&\le\|c(t_0)\|_\infty +C(T-t_0)^{1/2}\|c(t_0)\|_\infty^{m+1},
\end{aligned}$$
hence $1\le C(T^*-t_0)^{1/2}\|c(t_0)\|_\infty^m$, i.e.~\eqref{rateBU1}.
As for property \eqref{rateBU2}, it is a direct consequence of Lemma~\ref{lem:L2b} and of Theorem~\ref{thm1}(iii).
\end{proof}

\section{Appendix: local theory for regular initial data in the case of cylinder.}

We here provide the necessary auxiliary results on the solvability and regularity of linear and nonlinear problems
with inhomogeneous Neumann boundary conditions
in the case of nonsmooth (cylindrical) domains.
Recall that the space $E_\tau$ and the notion of classical solution are defined in $\eqref{defEtau}$.
In what follows, for $c_0\in C(\overline\Omega)$, 
by a mild solution of \eqref{pbmP} on $[0,T)$, 
we mean a function $c\in C(\overline\Omega\times[0,T))$ such that
\be\label{NewRepresApp}
c(t)=S(t)c_0+\int_0^t K_\nabla(t-s)[A(s) c(\cdot,s)]\diff \! s,\quad 0<t<T,
\ee
where $K_\nabla, A$ are respectively defined in \eqref{defKnabla}, \eqref{NewRepres2}, and
the integral in \eqref{NewRepresApp} is absolutely convergent in $C(\overline\Omega)$.

We shall prove:

\goodbreak 

 \begin{prop} \label{locexist-cyl}
 Let $\Omega$ be given by \eqref{defCyl}, $f$ be locally Lipschitz continuous on $\R$ 
and $c_0\in C(\overline\Omega)$.

\begin{itemize}[topsep=-1pt]\setlength\itemsep{-1pt} 
\item[(i)] {\bf (Existence-uniqueness)} Problem \eqref{pbmP} admits a unique, maximally defined, mild solution
$c\in C(\overline\Omega\times[0,T^*))$.
We have $c\in E_{T^*}$ and $c$ is a classical solution of \eqref{pbmP} on $[0,T^*)$.
\smallskip

\item[(ii)] {\bf (Positivity)}  
If $c_0\ge 0$, then $c\ge 0$ in $\overline\Omega\times[0,T^*)$.
If $\min_{\overline\Omega} c_0>0$, then
$c>0$ in~$\overline\Omega\times[0,T^*)$.
\smallskip

\item[(iii)] {\bf (Additional Sobolev regularity)}  
The solution $c$ satisfies
\be\label{ustrong2}
c\in  L^2_{loc}((0,T^*);H^2(\Omega))\cap H^1_{loc}((0,T^*);L^2(\Omega))
\ee
and we have the implications
\be\label{regulH1pApp}
c_0\in H^1(\Omega) \Longrightarrow  c\in C([0,T^*);H^1(\Omega))
\ee
and, for any $p\in[1,\infty)$, 
\be\label{regulW1pApp}
c_0\in W^{1,p}(\Omega) \Longrightarrow  c\in L^\infty_{loc}([0,T^*);W^{1,p}(\Omega)).
\ee

\smallskip

\item[(iv)] {\bf (Continuous dependence)}  
In addition, if $c_{0,j}\to c_0$ in $C(\overline\Omega)$ and $T< T^*(c_0)$, then we have
$T^*(c_{0,j})>T$ and $\sup_{t\in[0,T]}\|c_j-c\|_\infty\to 0$ as $j\to\infty$.

\smallskip

\item[(v)] {\bf (Extended uniqueness and continuation)}  
Let $T>0$. If a function in the class $\tilde E_T:=C(\overline\Omega\times[0,T))\cap E_T$ 
is a classical solution of \eqref{pbmP}, then it is a mild solution.
Consequently uniqueness holds also in the class of classical solutions in $\tilde E_T$.
In addition, we have the implication
\be\label{contTast}
T^*<\infty \Longrightarrow \lim_{t\to T^*}\|c(t)\|_\infty=\infty.
\ee
\smallskip

\item[(vi)] {\bf (Schauder estimates)}  
 Let $\alpha\in(0,1)$, $\beta\in(0,1/4)$, $T_0,N_0>0$, $T\in(0,\min(T_0,T^*))$, $\eps\in(0,T)$ and assume that 
	\be\label{DefN0}
	\sup_{t\in[0,T]}\|c(t)\|_\infty\le N_0.
	\ee
For any compact subset $\Sigma\subset \Omega\cup\Gamma$, we have
\be\label{regulCalpha}
\|c\|_{C^{2+\beta,1+\beta/2}(\Sigma\times[\eps,T])}
+\|\nabla c\|_{C^{\beta}(\overline\Omega\times[\eps,T])}
+\|c\|_{C^{\alpha,\alpha/2}(\overline\Omega\times[\eps,T])}
\le C(N_0,\eps,\Sigma,T_0).
\ee
\end{itemize}
 \end{prop} 

\begin{remark} \label{remPsi}
By exactly the same proof, Proposition~\ref{locexist-cyl} remains valid if $A(c(t))$ is replaced by 
$A(c(t))= \Phi\bigl(\int_{\partial\Omega}f(c)\nu \dsigma\bigr)$,
where $\Phi:\R\to\R$ is, for instance, any globally Lipschitz continuous function.
\end{remark}

\begin{remark} \label{remcorners}
(i) The need for the results in this appendix is motivated by the fact that
global $C^{2,1}$ regularity up to the boundary is not expected
in general for inhomogeneous Neumann problems in a cylinder, as shown by the following 
simple counter-example for $n=2$: 
Consider the heat equation $u_t-\Delta u = 0$ (or even the Laplace equation $\Delta u = 0$) in the square $\Omega=(0,1)^2$
with the $C^\infty$, time independent Neumann condition $\partial_\nu u = y-x$.
On the boundary part $\{x=1,\ 0<y<1\}$, we have $u_x =  u_\nu = y-1$
 hence $u_{yx}=1$, and on $\{y=1,\ 0<x<1\}$, we have $u_y =  u_\nu = 1-x$ hence $u_{xy}=-1$.
Therefore $u$ cannot be $C^2$ in space at the corner $(1,1)$.
This phenomenon (which is in fact independent of the PDE) is a consequence
 of the fact that the normal vector field is discontinuous at the corner points.
 
(ii) The regularity proof in Proposition~\ref{locexist-cyl} will take advantage of the possibility to write $\Omega$ 
as a product of smooth domains, and its Neumann heat kernel as a product of the corresponding kernels.
This is an essential feature and the above regularity properties are rather specific to the case of cylindrical domains.
Indeed it is well known that, e.g.,~$C^1$ regularity up to the boundary is not true in general Lipschitz domains.
 Consider for instance the case of the plane sectorial domain (in polar coordinates)
 $\Omega=\{(r,\theta);\ r>0,\ 0<\theta<\lambda \pi\}$ with $\lambda\in(1,2)$, for which the 
 (time independent) function $u=u(r,\theta)=r^k\cos(k\theta)$ with $k=1/\lambda$
 satisfies $u\in C^2(\overline\Omega\setminus\{O\})\cap C(\overline\Omega)$
with $\Delta u=0$ in $\Omega$ and $u_\nu=0$ on $\partial\Omega\setminus\{O\}$,
but $u$ is not $C^1$ at the origin.
\end{remark}

We shall use the following basic solvability result for the inhomogeneous linear Neumann problem:
\begin{subequations}\label{pbmNeumannLinear}
\begin{align}
	\hfill\partial_t u&=\Delta u+h,	 &&x\in\Omega,\ t>0,\\
	\hfill \partial_\nu u&=g,&&x\in\partial\Omega,\ t>0, \\
	\hfill u(x,0)&=u_0(x),&&x\in\Omega.
\end{align}
\end{subequations}

\begin{prop} \label{prop:Cylinder}
Let $\Omega$ be given by \eqref{defCyl}, $T>0$ and set $Q_T:=\Omega\times(0,T)$.
Let $u_0\in H^1(\Omega)$, $h\in L^2(Q_T)$ and 
$g\in L^2(0,T;H^{1/2}(\partial\Omega))\cap H^{1/4}(0,T;L^2(\partial\Omega))$.

(i) There exists a unique 
\be\label{ustrong}
u\in L^2(0,T;H^2(\Omega))\cap H^1(0,T;L^2(\Omega))\cap C([0,T);H^1(\Omega))
\ee
which is a strong solution of \eqref{pbmNeumannLinear},
i.e.~$u$ solves the PDE a.e.~in $Q_T$ and the boundary conditions in the sense of traces.
Moreover, $u$ is given by the representation formula in $Q_T$:
\be\label{represApp}
\begin{aligned}
u(x,t)
&=\int_\Omega G(x,y,t)u_0(y)\diff \! y+\int_0^t \int_\Omega G(x,y,t-s) h(x,s) \diff \! y\diff \! s \\
&\qquad +\int_0^t \int_{\partial\Omega} G(x,y,t-s) g(y,s)\diff \! \sigma_y \diff \! s.
\end{aligned}
\ee

(ii) Let $\alpha\in(0,1)$, $\eps\in(0,T)$, $M>0$ and assume in addition that
$g\in C^{\alpha,\alpha/2}(\partial\Omega\times[\eps/2,T])$, 
$u, h\in L^\infty(\Omega\times(\eps/2,T))$, with
$$\|g\|_{C^{\alpha,\alpha/2}(\overline\Omega\times[\eps/2,T])}+\|u\|_{L^\infty(\Omega\times(\eps/2,T))}+\|h\|_{L^\infty(\Omega\times(\eps/2,T))}\le M.$$
Then, for all $\beta\in(0,\alpha/4)$, we have $u\in C^{1+\beta,\beta}(\overline\Omega\times[\eps,T])$  and
\be\label{uclass1}
\|u\|_{C^{1+\beta,\beta}(\overline\Omega\times[\eps,T])}\le C(M,\eps,\beta,\alpha).
\ee
\end{prop}

\begin{remark} \label{remtrace}
The trace operator is continuous $H^1(\Omega)\to H^{1/2}(\partial\Omega)$
(this fact is valid in bounded Lipschitz domains; cf.~\cite{Gag}).
For $u$ satisfying \eqref{ustrong}, we have $\nabla u\in L^2_{loc}(0,\tau;H^1(\Omega))$, 
hence $\partial_\nu u$ is well defined as an element of $L^2_{loc}(0,\tau;H^{1/2}(\Omega))$.
\end{remark}

Whereas Proposition~\ref{prop:Cylinder} is well known for smooth domains 
or for the homogeneous Neumann case $g=0$, 
this doesn't seem to be the case for the full problem \eqref{pbmNeumannLinear} in cylindrical domains.
We thus provide a proof. 
To this end we need the following lemma, 
which provides a suitable lifting 
of the Neumann trace operator.

\begin{lemm} \label{lem:Cylinder}
Let $\Omega$ be given by \eqref{defCyl}.
For any $g\in L^2(0,\tau;H^{1/2}(\partial\Omega))\cap H^{1/4}(0,\tau;L^2(\partial\Omega))$,
there exists a function
$v\in L^2(0,\tau;H^2(\Omega))\cap H^1(0,\tau;L^2(\Omega))$ 
such that $v_\nu=g$ on $\Sigma\times(0,\tau)$ in the sense of traces, 
and $v\in C([0,\tau];L^2(\Omega))$ with $v(0)=0$. Moreover, 
the map $g\mapsto v$ is linear continuous on the above spaces.
\end{lemm}

\begin{proof}[Proof of Lemma~\ref{lem:Cylinder}]
The regular part of $\partial\Omega$ can be split into the 3 pieces $\Sigma_1=(0,L)\times\partial B'_R$,
$\Sigma_2=\{0\}\times B'_R$ and $\Sigma_3=\{L\}\times B'_R$.
We consider only the part $\Sigma_1$, the other two can be treated similarly
(and are slightly simpler actually). 

Let $(\psi_p)_{p\in\N}$ with $\psi_p=\psi_p(\omega)$ be a Hilbert basis of $L^2(\Sigma_1)$ made of eigenfunctions of the Laplace-Beltrami operator on $\Sigma_1$ with homogeneous Neumann boundary conditions at $x_1=0,L$
and $\lambda_p\ge 0$ the corresponding eigenvalues
(note that $(\psi_p)$ can be obtained by tensorizing a Hilbert basis of $L^2(\partial B'_R)$
made of eigenfunctions of the Laplace-Beltrami operator on $\partial B'_R$ with the functions $(\cos(j\pi x_1/L))_{j\in\N}$).
Set $\phi_m=\sqrt{2/\tau}\sin(m\pi t/\tau)$ for $m\in\N$.
Let $\bar g=g_{|\Sigma_1\times(0,\tau)}$.
Observing that the set $\{\phi_m(t)\psi_p(\omega)\}_{m,p}$
forms a Hilbert basis of $L^2(\Sigma_1\times(0,\tau))$, we can expand $\bar g$ as
$$\bar g(\omega,t)=\sum_{m,p} a_{m,p}\phi_m(t)\psi_p(\omega),\quad
\hbox{with } \|\bar g\|_{L^2(\Sigma_1\times(0,\tau))}^2=\sum_{m,p} a^2_{m,p}.$$
Set $\rho=|x'|$ and fix a smooth function $\theta(s)\ge 0$ 
such that $\theta(0)=1$, $\theta'(0)=-1$ and $\theta(s)=0$ for $s\ge R/2$.
We then define
\be\label{defvlift}
v(x,t)=v(\omega,\rho,t)=\sum_{m,p} \frac{a_{m,p}}{k_{m,p}}\theta(k_{m,p}(R-\rho))\phi_m(t)\psi_p(\omega),\quad
k_{m,p}=2+\sqrt{\lambda_p+m}.
\ee
Direct computation yields
$$\begin{aligned}
\|v_t\|_{L^2(Q_\tau)}^2
&=C\sum_{m,p} \frac{m^2 a^2_{m,p}}{k^2_{m,p}}\int_0^R \theta^2(k_{m,p}(R-\rho))\rho^{n-2} d\rho
\le C\sum_{m,p} \frac{m^2 a^2_{m,p}}{k^3_{m,p}},\\
\|D^2_\rho v\|_{L^2(Q_\tau)}^2
&=C\sum_{m,p} k^2_{m,p}a^2_{m,p}\int_0^R (\theta'')^2(k_{m,p}(R-\rho))\rho^{n-2} d\rho
\le C\sum_{m,p} k_{m,p}a^2_{m,p},\\
\|D^2_\omega v\|_{L^2(Q_\tau)}^2
&\le C\|v\|_{L^2(Q_\tau)}^2+C\|\Delta_\Sigma v\|_{L^2(Q_\tau)}^2\\
&\le C\sum_{m,p} \frac{a^2_{m,p}(1+\lambda^2_p)}{k^2_{m,p}}\int_0^R 
\theta^2(k_{m,p}(R-\rho))\rho^{n-2} d\rho
=C\sum_{m,p} \frac{a^2_{m,p}(1+\lambda^2_p)}{k^3_{m,p}}
	\end{aligned}$$
	and
$$
\|g\|_{L^2(0,\tau;H^{1/2}(\Sigma))}^2
=C\sum_{m,p} (1+\sqrt{\lambda_p}) a^2_{m,p},\quad 
\|g\|_{H^{1/4}(0,\tau;L^2(\Sigma))}^2
=C\sum_{m,p} (1+\sqrt{m}) a^2_{m,p}.
$$
In view of the definition of $k_{m,p}$, we have
$$\frac{m^2+1+\lambda^2_p}{k^3_{m,p}}+k_{m,p}
\le C(1+\sqrt{\lambda_p}+\sqrt{m}).$$ 
It follows that $v\in L^2(0,\tau;H^2(\Omega))$
(using the cylindrical parametrization $(\rho,\omega)\in (0,R)\times \Sigma_1$ for $\Omega$),
as well as $v\in H^1(0,\tau;L^2(\Omega))\subset C([0,\tau];L^2(\Omega))$,
and $v$ satisfies the corresponding linear estimates.
Moreover, by \eqref{defvlift}, we see that $v_\nu=v_\rho(\omega,R,t)=\bar g(\omega,t)$ on $\Sigma_1\times(0,\tau)$
and $v_\nu=\pm v_{x_1}=0$ on $(\Sigma_2\cup\Sigma_3)\times(0,\tau)$ (in the sense of traces),
as well as $v(\cdot,0)=0$.
\end{proof}

\begin{proof}[Proof of Proposition~\ref{prop:Cylinder}]
(i) Let $v$ be given by Lemma~\ref{lem:Cylinder} and set $\tilde u=u-v$. 
Then problem \eqref{pbmNeumannLinear} is equivalent to the same problem for $\tilde u$
with $g$ replaced by the homogeneous boundary conditions $\tilde g=0$ 
and $h$ replaced by $\tilde h=h-v_t+\Delta v\in L^2(Q_T)$.
The existence of a strong solution of the latter is well known. 
This is for instance a consequence of the maximal regularity results in \cite{DN} (for cylindrical domains) 
or in \cite{Monn} and the references therein (for general Lipschitz bounded domains), combined with the fact 
 that the domain of the Neumann Laplacian over $L^2(\Omega)$ coincides with $H^2(\Omega)$
 (which follows easily by noting that a Hilbert basis of eigenfunctions $L^2(\Omega)$
 can be obtained by tensorizing Hilbert bases of eigenfunctions on $L^2(0,L)$ and on $L^2(B'_R)$).

Uniqueness follows from the usual energy property.
As for \eqref{represApp}, let us recall the standard argument of the proof for completeness: 
Fixing $\eps\in(0,t)$ and integrating by parts in time and space the expression 
$\int_0^{t-\eps} \int_\Omega G(x,y,t-s) (u_t-\Delta u)(y,s)\diff \! y\diff \! s$,
we obtain
$$ 
\begin{aligned}
&\int_\Omega G(x,y,\eps)u(y,t-\eps)\diff \! y-\int_\Omega G(x,y,t)u_0(y)\diff \! y 
=\int_0^{t-\eps} \int_\Omega G(x,y,t-s) h(y,s) \diff \! y\diff \! s \\
&\qquad -\int_0^{t-\eps} \int_{\partial\Omega} 
G(x,y,t-s) g(y,s)\diff \! \sigma_y  \diff \! s
\end{aligned}
$$
and it then suffices to pass to the limit $\eps\to 0$.

(ii) The proof of this assertion is somewhat technical and is split in several parts.

{\bf Step 1.} {\it Preparations and notation.}
Pick a function $\eta\in C^\infty(\R)$ such that $\eta=1$ on $[\eps,\infty)$
and $\eta=0$ on $(-\infty,\frac{\eps}{2}]$.
Then $\tilde u:=\eta(t)u(x,t)$ is a strong solution of \eqref{pbmNeumannLinear}  
in $Q_T$ with $u_0=0$ and $h, g$ replaced by
$\bar h:=\eta h+\eta' u$ and $\bar g:=\eta g$,
hence $\tilde u$ is given by the representation formula in $Q_T$:
\be\label{represApp2}
\tilde u(x,t)
=\int_0^t \int_{\partial\Omega} G(x,y,t-s) \bar g(y,s)\diff \! \sigma_y \diff \! s
+\int_0^t \int_\Omega G(x,y,t-s) \bar h(x,s) \diff \! y\diff \! s\equiv v+V.
\ee
Note that
\be\label{regulbargh}
\|\bar g\|_{C^{\alpha,\alpha/2}(\partial\Omega\times[0,T])}+\|\bar h\|_{L^\infty(Q_T)}\le C(M,\eps)
\ee
and that, by interior $L^p$ parabolic regularity and standard imbeddings,
$v,V\in C^{1,0}(Q_T)$.
Most of the work is required by the boundary term $v$.
The volume term $V$ will be handled by simpler arguments (see Step~4 of the proof).
We shall estimate the H\"older quotients of $D_xv$ uniformly over $Q_T$, 
which will in particular imply that $v$ is a $C^{1,0}$ function in $\overline{Q_T}$.

Writing $x=(x_1,x_2)$ with $x_1\in\Omega_1=(0,L)$ and $x_2\in\Omega_2=B'_R$,
we shall take advantage of the factorisation 
property $G(x,y,t) = G_1(x_1,y_1,t)\times$ $G_2(x_2,y_2,t)$ of the Neumann heat kernel $G$ of $\Omega = \Omega_1 \times \Omega_2$.
We have
$\partial\Omega = (\partial\Omega_1 \times \Omega_2) \cup (\Omega_1 \times \partial\Omega_2) \cup \mathcal C$, where $\mathcal C$ denotes the set of the corner points. Since the latter has zero surface measure, it does not contribute.
We will consider the contribution $v_1$ of $\Omega_1 \times \partial\Omega_2$, the other one being similar (and slightly simpler).
Denote 
$$Q_{i,T}=\Omega_i\times(0,T),\ \Sigma_T:=\partial\Omega_2\times(0,T),\ J=\{(t,s);\, 0<s<t<T\},
\ \tilde Q=\Omega_1\times\partial\Omega_2\times J.$$
For $(x,t)\in Q_T$, by Fubini's theorem, we can write:
$$v_1(x_1,x_2,t) = \int_0^t \int_{\partial\Omega_2} G_2(x_2,y_2,t-s) w(x_1,y_2,t,s) d\sigma_{y_2} ds
=:T_2(t)w(\underline{x_1},y_2,\underline{t},s)$$
where the partial function $w$ is defined on $\tilde Q$ by
$$w(x_1,y_2,t,s) = \int_{\Omega_1} G_1(x_1,y_1,t-s) \bar g(y_1,y_2,s) dy_1=:
S_1(t-s)\bar g(y_1,\underline{y_2},\underline{s}).$$
Here, the underlined variables act as parameters,
and $(S_1(\tau))_{\tau\ge 0}, (T_2(t))_{t\ge 0}$ respectively correspond to the Neumann heat semigroup on $\Omega_1$ and to the Neumann boundary heat operator on $\Omega_2$. Namely,
$T_2(t)$ is the solution operator $\psi\mapsto v(\cdot,t)$ of the problem
$v_t-\Delta v=0$ in $Q_{2,T}$, with $v=\psi$ on $\Sigma_T$ and $v(\cdot,0)=0$.

For a function $\phi=\phi(z,\xi)$ of the variables $(z,\xi)\in D$
and $\alpha,\beta\in(0,1]$, we will use the following notation
for H\"older brackets and norms with respect to $z$ (uniform in $\xi$):
$$[\phi]_{C^\alpha_z(D)}
=\sup_{(\hat z,\xi), (\bar z,\xi)\in D,\atop \hat z\ne\bar z}
\frac{\big|[\phi(z,\xi)]^{\hat z}_{\bar z}\big|}{|\hat z-\bar z|^\alpha},
\quad \|\phi\|_{C^\alpha_z(D)}=[\phi]_{C^\alpha_z(D)}+\|\phi\|_{L^\infty(D)},$$
where $[\phi(z,\xi)]^{\hat z}_{\bar z}:=\phi(\hat z,\xi)-\phi(\bar z,\xi)$, as well as
$$\|\phi\|_{C^{\alpha,\beta}_{z,\xi}(D)}=[\phi]_{C^\alpha_z(D)}+[\phi]_{C^\beta_\xi(D)}+\|\phi\|_{L^\infty(D)}$$
(and similarly for functions of more than two variables).
We record the following useful property on H\"older norm of functions that are themselves defined by a H\"older quotient:
 setting
$D':=\big\{(\hat z, \bar z, \xi);\ (\hat z, \xi)\in D,\, (\bar z, \xi)\in D,\,\hat z\ne\bar z\big\}$,
we have, for any $\gamma,\beta\in(0,1)$,
\be\label{Holder0}
	\left[\frac{[\phi(z,\xi)]^{\hat z}_{\bar z}}{|\hat z-\bar z|^{\gamma/2}}\right]_{C^{\beta/2}_\xi(D')}
\le 2[\phi]_{C^{\gamma,\beta}_{z,\xi}(D)},\quad \phi\in C^{\gamma,\beta}_{z,\xi}(D).
\ee
To see this,  putting $M=[\phi]_{C^{\gamma,\beta}_{z,\xi}(D)}$,  it suffices to write
$$	\big[\phi(\hat z,\xi)-\phi(\bar z,\xi)\big]_{\bar\xi}^{\hat\xi}
=\big[\phi(\hat z,\xi)\big]_{\bar\xi}^{\hat\xi}-\big[\phi(\bar z,\xi)\big]_{\bar\xi}^{\hat\xi}
=\big[\phi(z,\hat \xi)\big]_{\bar z}^{\hat z}-\big[\phi(z,\bar\xi)\big]_{\bar z}^{\hat z},$$
hence
$\big|[\phi(\hat z,\xi)-\phi(\bar z,\xi)]_{\bar\xi}^{\hat\xi}\big|\le 2M\min(|\hat z-\bar z|^\gamma,|\hat\xi-\bar\xi|^\beta)
\le 2M|\hat z-\bar z|^{\gamma/2}|\hat\xi-\bar\xi|^{\beta/2}$.

In the rest of the proof, $C,C_1,C_2$ will denote generic positive constants depending only on 
$\Omega, T$ and on the various H\"older exponents.
By  \eqref{productG1G2} and standard heat kernel estimates for $G_1, G_2$
in the regular domains $\Omega_1, \Omega_2$ (see, e.g., \cite[Section~2]{Mora}), 
we have, for $\ell\in\{0,1,2\}$,
	\be\label{GaussEstDt}
	|D_tD^i_y G_\ell(x,y,t)|\le C_1t^{-1-(n+i)/2} e^{-C_2|x-y|^2/t},\quad x,y\in\overline\Omega_\ell,\ 0<t<T,\ i\in\{0,1\},
	\ee
	where $G_0=G$ and $\Omega_0=\Omega$.
Also, we have the estimates (see \cite[Theorem~2.3]{Mora} and \cite[Theorem~4.30]{Lie}):
	\be\label{S1Holder1}
	\|S_1(\tau)\phi\|_{C^{\alpha,\alpha/2}_{x_1,\tau}(Q_{1,T})}\le C\|\phi\|_{C^\alpha_{x_1}(\Omega_1)},
\quad \phi\in C^\alpha(\Omega_1),
\ee
	\be\label{S2Holder1}
	\|D_{x_2}T_2(t)\psi\|_{C^{\alpha,\alpha/2}_{x_2,t}(Q_{2,T})}
\le C\|\psi\|_{C^{\alpha,\alpha/2}_{x_2,t}(\Sigma_T)},
\quad \psi\in C^{\alpha,\alpha/2}(\Sigma_T).
\ee

{\bf Step 2.} {\it H\"older estimate of $D_{x_2}v_1$.} 
We shall show that 
\be\label{HolderDx2v1xt}
\|D_{x_2}v_1\|_{C^{\alpha/2,\alpha/4}_{x,t}(Q_T)} \le C\|\bar g\|_{C^{\alpha,\alpha,\alpha/2}_{y_1,y_2,s}(\Omega_1\times \Sigma_T)}.
\ee
 In this step, unless otherwise specified, the norms of $v_1,w$ and $\bar g$
(and their derivatives) will be always taken over $Q_T, \tilde Q$ and $\Omega_1\times \Sigma_T$, respectively.

We first estimate the H\"older norm of the partial function $w$. Namely, we claim that 
	\be\label{S1HolderClaim1}
	\|w\|_{C^{\alpha,\alpha,\alpha/2,\alpha/2}_{x_1,y_2,t,s}}  
	\le C\|\bar g\|_{C^{\alpha,\alpha,\alpha/2}_{y_1,y_2,s}}.  
	\ee
It follows from \eqref{S1Holder1} that
	\be\label{S1Holder1b}
	\|w\|_{C^{\alpha,\alpha/2}_{x_1,t}}\le C\|\bar g\|_{C^\alpha_{y_1}}.
	\ee
Next writing
$\big[w(x_1,y_2,t,s)\big]_{\hat y_2}^{\bar y_2}
= S_1(t-s) \big(\big[\bar g(y_1,\underline{y_2},\underline{s})\big]_{\hat y_2}^{\bar y_2}\big)$
and using
	\be\label{S1Holder2}
	\|S_1(\tau)\phi\|_{L^\infty(\Omega_1)}\le \|\phi\|_{L^\infty(\Omega_1)},\quad \phi\in L^\infty(\Omega_1),\ \tau>0,
\ee
we get
	\be\label{S1Holder1c}
	\|w\|_{C^\alpha_{y_2}}\le \|\bar g\|_{C^\alpha_{y_2}}.
		\ee
For $0<\bar s<\hat s<t<T$, writing
$$\big[w(x_1,y_2,t,s)\big]_{\bar s}^{\hat s}
=S_1(t-\hat s)\Big(\big[\bar g(y_1,\underline{y_2},\underline{s})\big]_{\bar s}^{\hat s}\Big)
+\big[S_1(\tau)\bar g(y_1,\underline{y_2},\underline{\bar s})\big]_{\tau=t-\bar s}^{\tau=t-\hat s}
$$
and using \eqref{S1Holder2} (resp., \eqref{S1Holder1}) to control the first (resp., second) term on the right-hand side, we obtain
$\|w\|_{C^{\alpha/2}_s}\le C\|\bar g\|_{C^{\alpha,\alpha/2}_{y_1,s}}$ which, 
combined with \eqref{S1Holder1b}, \eqref{S1Holder1c}, yields~\eqref{S1HolderClaim1}.

We next estimate the H\"older norms of $D_{x_2}v_1$.
By \eqref{S2Holder1}, \eqref{S1HolderClaim1} we have 
\be\label{HolderfDx2a}
\|D_{x_2}v_1 
\|_{C^{\alpha}_{x_2}} 
\le C\|w 
\|_{C^{\alpha,\alpha/2}_{y_2,s}}\le C\|\bar g\|_{C^{\alpha,\alpha,\alpha/2}_{y_1,y_2,s}}.\ee
On the other hand, for $0<\bar t<\hat t<T$, writing
$$\frac{\big[D_{x_2}v_1(x_1,x_2,t)\big]_{\bar t}^{\hat t}}{|\hat t-\bar t|^{\alpha/4}}
=D_{x_2}T_2(\hat t)\biggl\{\frac{\big[w(\underline{x_1},y_2,\underline{t},s)\big]_{\bar t}^{\hat t}}{|\hat t-\bar t|^{\alpha/4}}\biggr\}
+\frac{\left[D_{x_2}T_2(t)w(\underline{x_1},y_2,\underline{\bar t},s)\right]_{\bar t}^{\hat t}}{|\hat t-\bar t|^{\alpha/4}}$$
and using \eqref{S2Holder1}, 
we obtain
$$	\begin{aligned}
\biggl|\frac{\big[D_{x_2}v_1(x_1,x_2,t)\big]_{\bar t}^{\hat t}}{|\hat t-\bar t|^{\alpha/4}}\biggr|
&\le C\biggl\|\frac{\big[w(x_1,y_2,t,s)\big]_{\bar t}^{\hat t}}{|\hat t-\bar t|^{\alpha/4}}\biggr\|_{C^{\alpha/2,\alpha/4}_{y_2,s}(\tilde Q')} 
+C\bigl\|w\bigr\|_{C^{\alpha/2,\alpha/4}_{y_2,s}}, 
	\end{aligned}$$
	where $\tilde Q'=\Omega_1\times\partial\Omega_2\times
	\big\{(\hat t,\bar t,s),\ 0<s<\bar t<\hat t<T\big\}$.
	By \eqref{Holder0} and \eqref{S1HolderClaim1}, it follows that
\be\label{HolderfDx2b}
	\big[D_{x_2}v_1 
	\big]_{C^{\alpha/4}_{t}} 
\le C\bigl\|w\bigr\|_{C^{\alpha,\alpha/2,\alpha/2}_{y_2,t,s}}  
\le C\|\bar g\|_{C^{\alpha,\alpha,\alpha/2}_{y_1,y_2,s}}.  
\ee
Also, writing
$$\frac{\big[D_{x_2}v_1(x_1,x_2,t)\big]_{\bar x_1}^{\hat x_1}}{|\hat x_1-\bar x_1|^{\alpha/2}}
=D_{x_2}T_2(t)\biggl\{\frac{\big[w(\underline{x_1},y_2,\underline{t},s)\big]_{\bar x_1}^{\hat x_1}}{|\hat x_1-\bar x_1|^{\alpha/2}}\biggr\},$$
setting $\tilde Q''=\big\{(\hat x_1,\bar x_1);\ \hat x_1,\bar x_1\in\Omega_1,\,\hat x_1\ne\bar x_1\}\times\partial\Omega_2\times J$ and using \eqref{Holder0} and \eqref{S1HolderClaim1},
we obtain, 
\be\label{HolderfDx2c}
\big[D_{x_2}v_1  
\big]_{C^{\alpha/2}_{x_1}}  
\le C\biggl\|\frac{\big[w(x_1,y_2,t,s)\big]_{\bar x_1}^{\hat x_1}}{|\hat x_1-\bar x_1|^{\alpha/2}}\biggr\|_{C^{\alpha/2,\alpha/4}_{y_2,s}(\tilde Q'')}
\le C\bigl\|w\bigr\|_{C^{\alpha,\alpha,\alpha/2}_{x_1,y_2,s}}  
\le C\|\bar g\|_{C^{\alpha,\alpha,\alpha/2}_{y_1,y_2,s}}. 
\ee
Combining \eqref{HolderfDx2a}-\eqref{HolderfDx2c}, we have proved \eqref{HolderDx2v1xt}.

{\bf Step 3.} {\it H\"older estimate of $D_{x_1}v_1$.}
We shall show that 
\be\label{HolderDx1v1xt}
\|D_{x_1}v_1\|_{C^{\beta,\beta}_{x,t}(Q_T)} \le C\|\bar g\|_{C^{\alpha,\alpha,\alpha/2}_{y_1,y_2,s}(\Omega_1\times \Sigma_T)},
\quad 0<\beta<\alpha/4.
\ee

We set 
$$[\tilde S_2(\tau)\psi](x_2)=\int_{\partial\Omega_2} G_2(x_2,y_2,\tau) \psi(y_2) d\sigma_{y_2},
\quad (x_2,\tau)\in Q_{2,T},$$
and rewrite $v_1$ as
\be\label{v1S1tildeS2}
v_1(x_1,x_2,t) = \int_0^t S_1(t-s)\big[z(y_1,\underline{x_2},\underline{t},\underline{s})\big] \,ds,
\quad z(y_1,x_2,t,s)=\tilde S_2(t-s)\bar g(\underline{y_1},y_2,\underline{s}),
\ee
where the partial function $z$ is defined on 
$\Omega\times J$. 
In this step, unless otherwise specified, the norms of $v_1$ and $\bar g$
(and their derivatives) will be taken over $Q_T$ and $\Omega_1\times \Sigma_T$, respectively.
For integers $i,j\ge 0$ with $i+j\le 1$, we have the heat kernel and semigroup estimates
$$
\int_{\partial\Omega_2}|D^i_\tau D^j_{x_2} G_2(x_2,y_2,\tau)|d\sigma_{y_2}
\le C\int_{\partial\Omega_2}\tau^{-\frac{n+2i+j}{2}}e^{-C_2|x_2-y_2|^2/\tau}d\sigma_{y_2}\le C\tau^{-\frac{1+2i+j}{2}},
$$
owing to \eqref{GaussEst} and \eqref{GaussEstDt}
(the last inequality follows by using local charts and flattening the boundary).
Consequently,
\be\label{GaussEstAppB}
\|D^i_\tau D^j_{x_2}\tilde S_2(\tau)\psi\|_\infty\le C\tau^{-(1+2i+j)/2}\|\psi\|_\infty,\quad \psi\in L^\infty(\Omega_1),\  i,j\ge 0,\ i+j\le 1,
\ee
and, for $\alpha,\beta\in[0,1)$, by \cite[Theorem~2.3]{Mora},
\be\label{Dx1S1}
\|D_{x_1}S_1(\tau)\phi\|_{C^{\beta,\beta/2}_{x_1,\tau}(\Omega_1\times(\eps,T))}\le C\eps^{-(1+\beta-\alpha)/2}\|\phi\|_{C^\alpha_{y_1}(\Omega_1)},
\quad \phi\in C^\alpha(\Omega_1),\ 0<\eps<T.
\ee

We first derive H\"older estimates for $z$. Let $\beta\in(0,1)$.
It follows from \eqref{GaussEstAppB} with $i=j=0$ that,
for $0<s<t<T$, 
$$
|z(y_1,x_2,t,s)| \le
C(t-s)^{-\frac12}\|\bar g\|_\infty,\ \
\big|\left[z(y_1,x_2,t,s)\right]_{\bar y_1}^{\hat y_1}\big|\le
C(t-s)^{-\frac12}\sup_{(y_2,s)\in \Sigma_T}\big|\left[g(y_1,y_2,s)\right]_{\bar y_1}^{\hat y_1}\big|,
$$
hence
\be\label{HolderfDx1a}
\|z(y_1,x_2,t,s)\|_{C^\alpha_{y_1}(\Omega)}\le C(t-s)^{-1/2} \|\bar g\|_{C^\alpha_{y_1}}. 
\ee
By interpolating \eqref{GaussEstAppB} for $i=0$ between $j=0$ and $j=1$, we get
\be\label{HolderfDx1a2}
\|z(y_1,x_2,t,s)\|_{C^\beta_{x_2}(\Omega)}\le  C(t-s)^{-(1+\beta)/2} \|\bar g\|_\infty 
\ee
and by interpolating \eqref{GaussEstAppB} for $j=0$ between $i=0$ and $i=1$, we obtain
\be\label{HolderfDx1a3}
\big|[z(y_1,x_2,t,s)]_{\bar t}^{\hat t}\big|\le  C(\bar t-s)^{-(1+\beta)/2}|{\hat t}-{\bar t}|^{\beta/2} \|\bar g\|_\infty,
\quad 0<s<\bar t<\hat t<T.
\ee

We next estimate the H\"older norms of $D_{x_1}v_1$ in space.
By \eqref{Dx1S1} and \eqref{HolderfDx1a}, we have
$$\begin{aligned}
\|D_{x_1}S_1(t-s)z(y_1,\underline{x_2},\underline{t},\underline{s})\|_{C^\beta_{x_1}(\Omega)} 
\le C(t-s)^{-1+\frac{\alpha-\beta}{2}}\|\bar g\|_{C^\alpha_{y_1}},
\quad 0<s<t<T, 
\end{aligned}$$
so that, 
\be\label{HolderfDx1a2b}
\|D_{x_1}v_1\|_{C^{\beta}_{x_1}}\le  C\|\bar g\|_{C^\alpha_{y_1}},  
\quad 0<\beta<\alpha.
\ee
Let $\gamma\in (0,1)$. For $0<s<t<T$,
by \eqref{Dx1S1} with $\beta=0$ and \eqref{Holder0}, we have
$$	\begin{aligned}
\bigg|\frac{[D_{x_1}S_1(t-s)z(y_1,\underline{x_2},\underline{t},\underline{s})]_{\bar x_2}^{\hat x_2}}{|\hat x_2-\bar x_2|^{\gamma}}\bigg|
&=\bigg|D_{x_1}S_1(t-s)\biggl(\frac{[z(y_1,\underline{x_2},\underline{t},\underline{s})]_{\bar x_2}^{\hat x_2}}{|\hat x_2-\bar x_2|^{\gamma}}\biggr)\bigg|\\
&\le C(t-s)^{-\frac12+\frac{\alpha}{4}} \bigg\|\frac{\big[z(y_1,x_2,t,s)\big]_{\bar x_2}^{\hat x_2}}{|\hat x_2-\bar x_2|^{\gamma}}\biggr\|_{C^{\alpha/2}_{y_1}(\Omega_1)}\\
&\le C(t-s)^{-\frac12+\frac{\alpha}{4}} \|z(y_1,x_2,t,s)\|_{C^{\alpha,2\gamma}_{y_1,x_2}(\Omega)}.
	\end{aligned}$$
Therefore, by \eqref{HolderfDx1a} and \eqref{HolderfDx1a2} with $\beta=2\gamma$, we have
$$\big[D_{x_1}S_1(t-s)
z(y_1,\underline{x_2},\underline{t},\underline{s})\big]_{C^{\gamma}_{x_2}(\Omega)}\le  C\|\bar g\|_{C^\alpha_{y_1}} (t-s)^{-1+\frac{\alpha}{4}-\gamma}$$ 
so that, 
\be\label{HolderfDx1b}
\big[D_{x_1}v_1\big]_{C^{\gamma}_{x_2}}\le  C\|\bar g\|_{C^\alpha_{y_1}}, 
\quad 0<\gamma<\alpha/4.
\ee

Finally, to estimate the H\"older norm of $D_{x_1}v_1$ in time, for $x\in\Omega$ and $0<\bar t<\hat t<T$, 
we write
\be\label{HolderI123}
\begin{aligned}
	\big|[D_{x_1}v_1(x,t)]^{\hat t}_{\bar t}\big|
	&\le \int_0^{\bar t} \bigl|\big[D_{x_1}S_1(t-s)
z(y_1,\underline{x_2}, \underline{\hat t},\underline{s})\big]_{t=\bar t}^{t=\hat t}\bigr| \diff \! s\\
	&\qquad +\int_{\bar t}^{\hat t} \bigl|D_{x_1}S_1(\hat t-s)z(y_1,\underline{x_2}, \underline{\hat t},\underline{s})\bigr|\,\diff \! s\\
	&\qquad + \int_0^{\bar t} \bigl|D_{x_1}S_1(\bar t-s)\big[z(y_1,\underline{x_2}, \underline{t},\underline{s})]_{\bar t}^{\hat t}\big]\bigr|\diff \! s \equiv  I_1+I_2+I_3.
	\end{aligned}\ee
Let $0<\gamma<\alpha$ and $0<s<\bar t<\hat t<T$. By \eqref{Dx1S1} 
with $\beta=\gamma$ and \eqref{HolderfDx1a} we have
$$
\Big|\big[D_{x_1}S_1(t-s)z(y_1,\underline{x_2},\underline{\hat t},\underline{s})\big]_{t=\bar t}^{t=\hat t}\Big|
\le C(\bar t-s)^{-1+\frac{\alpha-\gamma}{2}}|\hat t-\bar t|^{\gamma/2} \|\bar g\|_{C^\alpha_{y_1}}, 
$$
hence
\be\label{HolderI1}
|I_1|
\le C 
|\hat t-\bar t|^{\gamma/2} \|\bar g\|_{C^\alpha_{y_1}},\quad 0<\gamma<\alpha,  
\ee
By \eqref{Dx1S1} with $\beta=0$ and \eqref{HolderfDx1a}, we get
	$$\bigl|D_{x_1}S_1(\hat t-s)z(y_1,\underline{x_2}, \underline{\hat t},\underline{s})\bigr|
	\le C(\hat t-s)^{-1+\frac{\alpha}{2}}\|\bar g\|_{C^\alpha_{y_1}},$$  
hence
\be\label{HolderI2}
|I_2|
\le C|\hat t-\bar t|^{\alpha/2} \|\bar g\|_{C^\alpha_{y_1}}.\ee 
For given $0<s<\bar t<T$, set $\hat Q'_{s,\bar t}=\{(y_1,x_2,t);\ x_1\in\Omega_1,\,x_2\in\Omega_2,\,t\in(\bar t,T)\}$.
By \eqref{Holder0}, \eqref{HolderfDx1a} and \eqref{HolderfDx1a3} with $\beta=\gamma$, we have
	$$
\bigg\|\frac{\big[z(y_1,x_2,t,s)\big]_{\bar t}^{\hat t}}{|\hat t-\bar t|^{\frac{\gamma}{4}}}\biggr\|_{C^{\alpha/2}_{y_1}(\Omega_1)}
\le 2\|z(y_1,x_2,t,s)\|_{C^{\alpha,\gamma/2}_{y_1,t}(\hat Q'_{s,\bar t})}
\le C(\bar t-s)^{-\frac{1+\gamma}{2}}\|g\|_{C^\alpha_{y_1}}
$$
	hence, using \eqref{Dx1S1} with $\beta=0$,
	$$
	\Big|D_{x_1}S_1(\bar t-s)\big[z(y_1,x_2,t,s)\big]_{\bar t}^{\hat t}\Big|
	\le C(\bar t-s)^{-1+\frac{\alpha-\gamma}{2}} |\hat t-\bar t|^{\gamma/4} \|g\|_{C^\alpha_{y_1}}, 
$$
so that
\be\label{HolderI3}
|I_3|
\le C 
|\hat t-\bar t|^{\gamma/4} \|\bar g\|_{C^\alpha_{y_1}},\quad 0<\gamma<\alpha. 
\ee
Combining \eqref{HolderI123}-\eqref{HolderI3} yields
\be\label{HolderfDx1c}
\big[D_{x_1}v_1\big]_{C^{\gamma}_{t}}\le  C\|\bar g\|_{C^\alpha_{y_1}},
\quad 0<\gamma<\alpha/4, 
\ee
and this along with \eqref{HolderfDx1a2b}-\eqref{HolderfDx1b} gives \eqref{HolderDx1v1xt}.

{\bf Step 4.} {\it Volume term and conclusion.}
Owing to $G=G_1G_2$,  and the estimates in \cite[Theorem~2.2]{Mora} for $G_1, G_2$, we have
$$
|D_x G(\hat x,y,\hat t)-D_x G(x,y,t)|
\le Ct^{-\frac{n}{2}-\frac{1+\alpha}{2}}e^{-C_2d^2/t}
\bigl(|\hat x-x|^\alpha+|\hat t-t|^{\alpha/2}\bigr)$$
for $\hat x,x,y\in \Omega$, $0<t<\hat t<T$, where
 $d=|\hat x-y|\wedge |x-y|$.
Using this estimate, \eqref{GaussEst} with $i=1$, $j=0$, and similar (but simpler) argument as in Step~3, 
it follows that the volume term $V$ in \eqref{represApp2} satisfies
\be\label{HolderfDxV}
\|D_xV\|_{C^{\alpha,\alpha/2}_{x,t}(Q_T)}\le  C\|\bar h\|_\infty.
\ee
 Gathering estimates 
 \eqref{HolderDx2v1xt}, \eqref{HolderDx1v1xt} and \eqref{HolderfDxV}, we conclude that
 $u$ extends by continuity to a $C^1$ function on $\overline\Omega\times(0,T^*)$
and, also using \eqref{regulbargh}, $u$ satisfies the estimate in \eqref{uclass1}.
\end{proof}

The proof of Proposition~\ref{locexist-cyl} will also make use of the following property of the Neumann heat semigroup.
This is well known for smooth domains but, again, we could not find a reference for cylinders.

\begin{lemm}\label{lem:SGW}
 Let $\Omega$ be given by \eqref{defCyl}, $p\in[1,\infty)$ and $T>0$. Then
 \be\label{SGW1}
 \|S(t)\psi\|_{1,p} \le C\|\psi\|_{1,p},\quad \psi\in W^{1,p}(\Omega),\ 0<t\le T.  
  \ee
\end{lemm}

\begin{proof}
By \eqref{GaussEst} with $i=j=0$, we have
 \be\label{SGW2}
 \|S(t)\phi\|_p \le C\|S(t)\phi\|_p,\quad \phi\in L^p(\Omega),\ 0<t\le T.
 \ee
Write $x=(x_1,x_2)$ with $x_1\in\Omega_1=(0,L)$ and $x_2\in\Omega_2=B'_R$.
Let $t\in(0,T]$ and $\psi\in W^{1,p}(\Omega)$. 
Owing to \eqref{productG1G2}, we may write $v=\nabla_{x_2}S(t)\psi$ as 
$$v(x_1,x_2,t) = \int_{\Omega_1}G_1(x_1,y_1,t) 
\underbrace{\Bigl(\int_{\Omega_2} \nabla_{x_2}G_2(x_2,y_2,t) \psi(y_1,y_2)dy_1\Bigr)}_{w(y_1,x_2,t)}dy_2.$$
Denote by $S_i(t)$ the Neumann heat semigroup on $\Omega_i$.
Since $\Omega_2$ is smooth, we know (see, e.g.,~\cite[Theorem~51.1(iv) and Example~51.4(ii)]{QSb}) that \eqref{SGW1} is true for $\Omega$ replaced by $\Omega_2$.
For a.e.~$y_1\in\Omega_1$, since $w(y_1,\cdot,t)=\nabla_{x_2}(S_2(t)\psi(y_1,\cdot))$,
it follows that $\|w(y_1,\cdot,t)\|_{L^p(\Omega_2)}\le C\|\psi(y_1,\cdot)\|_{W^{1,p}(\Omega_2)}$.
On the other hand, for a.e.~$x_2\in\Omega_2$, 
by \eqref{SGW2} applied with $\Omega$ replaced by $\Omega_1$, 
since $v(\cdot,x_2,t)=S_1(t)w(\cdot,x_2,t)$, we have
$\|v(\cdot,x_2,t)\|_{L^p(\Omega_1)}\le C\|w(\cdot,x_2,t)\|_{L^p(\Omega_1)}$.
By Fubini's theorem, we then deduce that
$$\begin{aligned}
&\|\nabla_{x_2}S(t)\psi\|^p_p 
=\int_{\Omega_2}\Bigl(\int_{\Omega_1}|v(x_1,x_2,t)|^p dx_1\Bigr)dx_2
\le C\int_{\Omega_2}\Bigl(\int_{\Omega_1}|w(y_1,x_2,t)|^p dy_1\Bigr)dx_2\\
&=C\int_{\Omega_1}\Bigl(\int_{\Omega_2}|w(y_1,x_2,t)|^p dx_2\Bigr)dy_1
\le C\int_{\Omega_1}\Bigl(\int_{\Omega_2}(|\psi|^p+|\nabla\psi|^p)(y_1,y_2) dy_2\Bigr)dy_1
= C\|\psi\|^p_{1,p}.  
	\end{aligned}$$
Applying the same argument upon exchanging the roles of $x_1, x_2$, and 
combining with \eqref{SGW2} yields \eqref{SGW1}.
\end{proof}

\begin{proof}[Proof of Proposition~\ref{locexist-cyl}]
We divide it in several steps.

{\bf Step 1.} {\it Existence of a mild solution.}
We prove the existence of a unique, maximal solution $u$
in the (larger) class $L^\infty(0,T;C(\overline\Omega))$ of \eqref{NewRepresApp}
by a fixed point argument.
For $R,\tau>0$ to be chosen, we set
$$\mathcal{X}_\tau=L^\infty(0,\tau;C(\overline\Omega)), \quad \|c\|_{\mathcal{X}_\tau}:=\sup_{t\in (0,\tau)}\|c(t)\|_\infty,\quad 
\mathcal{X}_{\tau,R}=\bigl\{c\in \mathcal{X}_\tau,\ \|c\|_{\mathcal{X}_\tau}\le R\bigr\}$$
and note that $\mathcal{X}_{\tau,R}$ is a complete metric space.
We define the fixed point operator:
$$[\Phi(c)](t)=S(t)c_0+\int_0^t K_\nabla(t-s)[A(c(s)) c(\cdot,s)]\diff \! s,\quad 0<t<\tau.$$
Set $M_R=\sup_{|s|\le R}|f(s)|$,
$L_R=\sup\bigl\{|\frac{f(x)-f(y)}{x-y}|;\ |s_1|, |s_2|\le R,\ s_1\ne s_2\bigr\}$
and let $u,v\in \mathcal{X}_{\tau,R}$ and $0<s<t<\tau$. 
Since $x\mapsto D_yG(x,y,t)$ is continuous in $\overline\Omega$ for each $y\in\overline\Omega$, 
it follows from \eqref{smooth1b} with $q=r=\infty$ and dominated convergence that 
$K_\nabla(t-s)[A(u(s))u(\cdot,s)]\in C(\overline\Omega)$.
On the other hand, we easily get
$\|A(u(s))u(\cdot,s)\|_\infty\le C_\Omega M_R\|u(\cdot,s)\|_\infty$ and
$\|A(u(s))u(\cdot,s)-A(v(s))v(\cdot,s)\|_\infty\le C_\Omega M_R(L_R+1)\|u-v\|_{\mathcal{X}_\tau}$
with $C_\Omega>0$.
Recall that 
\be\label{resgulSc_0}
S(t)c_0\in C([0,\infty);C(\overline\Omega)),
\ee
owing to $c_0\in C(\overline\Omega)$, and that $\|S(t)c_0\|_\infty\le \|c_0\|_\infty$.
Using this and \eqref{smooth1b} with $q=r=\infty$, we obtain $\Phi(u)\in \mathcal{X}_\tau$ and 
$$\|\Phi(u)\|_{\mathcal{X}\tau}\le \|c_0\|_\infty+C_\Omega M_R\tau^{1/2}\|u\|_{\mathcal{X}_\tau},\ 
\|\Phi(u)-\Phi(v)\|_{\mathcal{X}\tau}\le C_\Omega M_R(L_R+1)\tau^{1/2}\|u-v\|_{\mathcal{X}_\tau}.$$
Choosing $R=1+\|c_0\|_\infty$ and $\tau=(2C_\Omega M_R(L_R+1))^{-2}$,
it follows that $\Phi$ is a contraction on $\mathcal{X}_{\tau,R}$ and the
Banach fixed point theorem yields the existence of a unique solution $c$ of \eqref{NewRepresApp} on $(0,\tau)$.

Denoting by $c\in L^\infty_{loc}([0,T^*);C(\overline\Omega))$ the maximally defined mild solution, we see that
the maximal existence time satisfies 
\be\label{lowerT}
T^*\ge C(\Omega) (M_R(L_R+1))^{-2},\quad R=1+\|c_0\|_\infty.
\ee
The continuous dependence property (assertion (iv)) follows from standard arguments
based on the above fixed point estimates and Gronwall's lemma.

\smallskip
{\bf Step 2.} 
{\it $W^{1,p}$ regularity.}
Let $p\in[1,\infty)$, $T_0>0$, $0<T<\min(T_0,T^*)$, $\eps\in(0,T)$ and 
assume \eqref{DefN0}.
We claim that
	\be\label{regulW1pApp0eps}
c\in L^\infty_{loc}((0,T^*);W^{1,p}(\Omega)),\quad 
\|c\|_{L^\infty(\eps,T;W^{1,p}(\Omega))}\le C(N_0,\eps,T_0).
\ee
(here and below the constants may also depend on $\Omega,f,p$).

For any $k\in[0,1)$, using \eqref{smooth2} for $q=p$ and $\ell=0$, we get 
	\be\label{W1pc1}
	\|c(t)-S(t)c_0\|_{k,p}\le C(N_0,T_0)t^{(1-k)/2},\quad t\in(0,T).
	\ee
For $t\in(0,T)$, by \eqref{smooth1} with $q=r=p$, it follows that
$\|c(t)\|_{k,p}\le  C(N_0,T_0)t^{-k/2}$.
Then using \eqref{smooth2} for $\ell=1/2p$, we obtain
	\be\label{W1pc2}
	\begin{aligned}
\|c(t)-S(t)c_0\|_{1,p}
&\le\sup_{s\in(0,T)} |A(s)| \int_0^t  (t-s)^{-1+\frac{1}{4p}} \|c(s)\|_{1/2p,p} \diff \! s \\
&\le  
C(N_0,T_0) \int_0^t  (t-s)^{-1+\frac{1}{4p}} s^{-1/4p} \diff \! s\le C(N_0,T_0).
\end{aligned}
\ee
Combining with \eqref{smooth1} for $q=r=p$ and $k=1$, we deduce \eqref{regulW1pApp0eps}.

Next assume $c_0\in W^{1,p}(\Omega)$. Using
Lemma~\ref{lem:SGW} to estimate $S(t)c_0$ in \eqref{W1pc1}, 
we get $K:=\sup_{t\in(0,T)}\|c(t)\|_{k,p}<\infty$ for $k\in[0,1)$.
The first inequality in \eqref{W1pc2} then implies
	\be\label{W1pc2B}
\|c(t)-S(t)c_0\|_{1,p}\le C(N_0,T_0) Kt^{1/4p},
\ee 
and a second application of Lemma~\ref{lem:SGW} yields \eqref{regulW1pApp}.
On other hand, in the case $p=2$, inequality \eqref{W1pc2B} combined with the last part of \eqref{ustrong} 
(applied with $h=g=0$) gives \eqref{regulH1pApp}.

\smallskip
{\bf Step 3.} 
{\it H\"older regularity.}
Let $\alpha\in(0,1)$, $T_0>0$, $0<T<\min(T_0,T^*)$, $\eps\in(0,T)$ and assume \eqref{DefN0}.
In the rest of the proof we denote by $\bar N_\eps$ a generic positive constant
depending only on $N_0,\eps,T_0$ (and $\Omega,f,\alpha$).

We claim that
	\be\label{timeHolder}
			c\in C^{\alpha,\alpha/2}_{loc}(\overline\Omega\times(0,T^*)),
			\quad \|c\|_{C^{\alpha,\alpha/2}(\overline\Omega\times[\eps,T])}\le \bar N_\eps
			\quad\hbox{and}\quad
c\in C(\overline\Omega\times[0,T^*)).		\ee
Let $0<t<t'<T$ and set $h=t'-t$. From \eqref{GaussEstDt} with $i=1$ and $\ell=0$ we deduce, for all 
$\psi\in L^\infty(\Omega)$ and $x\in\overline\Omega$,
$$\bigl|(K_\nabla(t')\psi-K_\nabla(t)\psi)(x)\bigr|\le C h \Bigl|\int_0^1 \int_\Omega (t+\theta h)^{-\frac{n+3}{2}}
e^{-\frac{C_2|x-y|^2}{t+\theta h}} \psi(y)\diff \! y\diff \! \theta\Bigr|
\le Ch t^{-\frac{3}{2}} \|\psi\|_\infty.$$
For any $\nu\in[0,1]$, this combined with \eqref{smooth1b} for $q=r=\infty$, implies
$$\bigl\|(K_\nabla(t')-K_\nabla(t))\psi\bigr\|_\infty\le C h^\nu t^{-\nu-1/2} \|\psi\|_\infty.$$
Set $V(\cdot,s)=[A(c(s)) c(\cdot,s)]$, $\tilde c(x,t)=\int_0^t K_\nabla(t-s)V(s)\diff \! s$.
Using $\sup_{s\in(0,T)} \|V(s)\|_\infty<C(N_0)$ and \eqref{smooth1b} for $q=r=\infty$,
it follows that, for any $\nu\in (0,1/2)$ and $x\in\overline\Omega$,
$$\begin{aligned}
	\|\tilde c(t')-\tilde c(t)\|_\infty
	&=\Bigl\| \int_t^{t'} K_\nabla(t'-s)V(s)\diff \! s
+\int_0^t \bigl(K_\nabla(t'-s)-K_\nabla(t-s)\bigr)V(s)\diff \! s\Bigr\|_\infty \\
	&\le \bar N_\eps\int_t^{t'} (t'-s)^{-1/2}\diff \! s
+\bar N_\eps(t'-t)^\nu\int_0^t (t-s)^{-\nu-1/2}\diff \! s\le \bar N_\eps(t'-t)^\nu.
	\end{aligned}$$
	This, along with similar arguments for $S(t)c_0$ using \eqref{GaussEstDt} with $i=0$, guarantees that
	$\|c\|_{C^{\alpha/2}(\eps,T;C(\overline\Omega))}\le \bar N_\eps$.
	Combining with \eqref{regulW1pApp0eps} and Morrey's imbedding (which holds since $\Omega$ admits a $C^1$ extension operator), we deduce the H\"older part of \eqref{timeHolder}.
	On the other hand, from \eqref{smooth1b} with $q=r=\infty$, we have $\|c(t)-S(t)c_0\|_\infty=\|\int_0^t K_\nabla(t-s)V(s)\diff \! s\|_\infty\le Ct^{1/2}$. In view of \eqref{resgulSc_0}, the
	continuity statement of \eqref{timeHolder} follows.

	In particular we have proved assertion (i).	
 	
\smallskip
{\bf Step 4.} {\it $H^2$ regularity and strong solution.} We shall now prove \eqref{ustrong2}.
Fix any $t_1, T$ with $0<t_1<T<T^*$, set $t_2=T-t_1$,
$a(t)=\int_{\partial\Omega}f(c)\nu \dsigma$, 
$c_1(t)=c(t+t_1)$, $a_1(t)=a(t+t_1)$ and
consider problem \eqref{pbmNeumannLinear} with $h=-a_1\cdot\nabla c_1$,
$g=(a_1\cdot\nu)c_1$ and $u_0=c(t_1)$.
By \eqref{regulW1pApp0eps}, \eqref{timeHolder} and standard imbedding and trace theorems, we have in particular
$g\in L^2(0,t_2;H^{1/2}(\partial\Omega))\cap H^{1/4}(0,t_2;L^2(\partial\Omega))$, as well as
$h\in L^2(\Omega\times(0,t_2))$.
Also, by \eqref{regulW1pApp0eps}, \eqref{timeHolder}, we have 
$\sup\,\{|\int_\Omega c(t_1)\nabla v|,\ v\in H^1,\,\|v\|_{H^1}=1\}<\infty$ 
hence $c(\cdot,t_1)\in H^1(\Omega)$ 
(whereas \eqref{regulW1pApp0eps} alone, being an a.e.~in time property, would not be sufficient).
We may then apply Proposition~\ref{prop:Cylinder} to deduce the existence of a unique strong solution $u$ of \eqref{pbmNeumannLinear} on $(0,t_2)$,
and $u$ satisfies the representation formula \eqref{represApp} with $\tau=t_2$.
Since $h=-a_1\cdot\nabla c_1$ and
$g=(a_1\cdot\nu)c_1$, integrating by parts as in the proof of Lemma~\ref{lem:repres}, we get
$$
u(x,t)=\int_\Omega G(x,y,t)c(y,t_1)\diff \! y 
+\int_0^t \int_\Omega \nabla_yG(x,y,t-s)\cdot \bigl(c_1(y,s) a_1(s)\bigr)\diff \! y \diff \! s.
$$
On the other hand, by standard manipulations on the Duhamel formula \eqref{NewRepresApp}, using Fubini's theorem and the fact that
$\nabla_yG(x,y,s_1+s_2)=\int_\Omega G(x,z,s_1)\nabla_yG(z,y,s_2) dz$ for all $s_1, s_2>0$, we see that
$$c(x,t_1+t)=\int_\Omega G(x,y,t)c(y,t_1)\diff \! y 
+\int_{t_1}^{t_1+t} \int_\Omega \nabla_yG(x,y,t_1+t-s)\cdot \bigl(c(y,s) a(s)\bigr)\diff \! y \diff \! s$$
hence, after a time shift, $u=c_1$ in $Q_2$.
Since $0<t_1<T<T^*$ are arbitrary, this proves~\eqref{ustrong2}
and $c$ is moreover a strong solution.
This, along with Step~2, completes the proof of assertion~(iii).

\smallskip
{\bf Step 5.} 
{\it Proof of assertion~(vi) (Schauder regularity).}
As a consequence of \eqref{timeHolder} and Proposition~\ref{prop:Cylinder}(ii) we have
\be\label{uclass1b}
\|\nabla c\|_{C^{\beta}(\overline\Omega\times[\eps,T])}\le \bar N_\eps,\quad 0<\beta<1/4.
\ee
It thus remains to prove that, for any compact subset $\Sigma\subset \Omega\cup\Gamma$,
\be\label{uclass2}
\|c\|_{C^{2+\beta,1+\beta/2}(\Sigma\times[\eps,T])}\le C(\bar N_\eps,\Sigma).
\ee

Fix a smooth domain $\Omega'$ such that $\Omega\subset\Omega'\subset (-L,2L)\times B'_R$.
Pick any $\delta\in(0,L/4)$ and a function $\theta\in C^\infty(\R)$ such that $\theta=1$ on $[\delta,L-\delta]$
and $\theta=0$ on $\R\setminus(\frac{\delta}{2},L-\frac{\delta}{2})$.
Also  
pick a function $\eta\in C^\infty(\R)$ such that $\eta=1$ on $[\eps,\infty)$
and $\eta=0$ on $(-\infty,\frac{\eps}{2}]$.
Then $\tilde c:=\theta\eta c=\theta(x_1)\eta(t)c(x,t)$ is a strong solution of \eqref{pbmNeumannLinear}  
in $\Omega'\times(0,T)$ with $u_0=0$ and
$$h=(\eta'\theta-\eta\theta'')c-\eta (a\theta+2\theta'e_1)\cdot\nabla c,\quad 
g=\eta[\theta a+ \theta' e_1]\cdot\nu c .$$
Owing to \eqref{timeHolder} and \eqref{uclass1b}, the functions $h, g$ satisfy
$\|h\|_{C^{\beta,\beta/2}(\overline\Omega'\times[0,T])}\le C(\bar N_\eps,\delta)$
and $\|g\|_{C^{1+\beta,\beta/2}(\partial\Omega'\times[0,T])}\le C(\bar N_\eps,\delta)$. It then follows from \cite[Theorem~4.31]{Lie}
 that \eqref{pbmNeumannLinear} admits a classical (hence strong) solution $v$, such that
$\|v\|_{C^{2+\beta,1+\beta/2}(\Omega'\times[0,T])}\le C(\bar N_\eps,\delta)$.
By uniqueness of the strong solution, we get $u=v$,  
hence 
$\|u\|_{C^{2+\beta,1+\beta/2}(([\delta,L-\delta]\times \overline B'_R)\times [\eps,T])}
\le C(\bar N_\eps,\delta)$.
Applying this argument with $\theta$ replaced by a spatial cut-off in the radial direction $|x'|$, 
we thus obtain \eqref{uclass2}.
Since $u$ is a strong solution by Step~4, it follows from Step~5 and \eqref{uclass2} that 
$c$ is a classical solution.

\smallskip
{\bf Step 6.} 
{\it Proof of assertions (ii) and (v).}
The nonnegativity statement in assertion (ii) is similar to that of Lemma~\ref{lempos1}(i) 
(using the regularity property \eqref{ustrong2}).
To prove the positivity part, thus assume $\sigma_0=\min_{\overline\Omega} c_0>0$, let $T\in(0,T^*)$ and set $K=\sup_{t\in(0,T)}|A(t)|$. 
Fix a positive function $\varphi\in C^2(\overline\Omega)$ such 
 that $\varphi_\nu\ge K\varphi$ on $\partial\Omega$
 (it suffices to consider $\varphi(x_1,x')=\varphi_1(x_1)\varphi_2(x')$,
 where the positive $C^2$ functions 
 $\varphi_1, \varphi_2$ satisfy the required property on $\{0,L\}$ and $\partial B'_R$, respectively).
 Set $\sigma_1=\min_{\overline\Omega} \varphi>0$, $b=-A-2\varphi^{-1}\nabla\varphi$ and 
 $\lambda=\sup_{\Omega\times(0,T)} (\varphi^{-1}\Delta\varphi+b\cdot\nabla\varphi)$.
 The function $v:=e^{\lambda t}\varphi c\ge 0$ satisfies 
	$$\begin{aligned}
	v_t-\Delta v
	&=e^{\lambda t}\bigl[\varphi(c_t-\Delta c+\lambda c)-2\nabla\varphi\cdot\nabla c-c\Delta\varphi\bigr]
=(\lambda-\varphi^{-1}\Delta\varphi) v+e^{\lambda t}(b\cdot\varphi \nabla c)\\
&=b\cdot\nabla v+(\lambda-\varphi^{-1}\Delta\varphi-b\cdot\nabla\varphi)v\ge b\cdot\nabla v,
 	\end{aligned}$$
along with $v_\nu=e^{\lambda t}(\varphi_\nu c+c_\nu\varphi) \ge (K+A(t)\cdot\nu)v\ge 0$ on $\Gamma$, and $v(\cdot,0)\ge \sigma_2:=\sigma_0\sigma_1$.
 Multiplying \eqref{pbmP} by $-(v-\sigma_2)_-$ and applying the Stampacchia argument in the proof of Lemma~\ref{lempos1}(i),
 we obtain $(v-\sigma_2)_-=0$, hence $c\ge \sigma_2 \|\varphi\|_\infty^{-1}e^{-\lambda t}$ on $(0,T)$, which implies the conclusion.

To prove assertion (v), let $c\in \tilde E_T$ be a classical solution of \eqref{pbmP}. 
We shall show that $u$ satisfies \eqref{NewRepresApp}.
The idea of the proof is similar to that of \eqref{represApp} (and of Lemma~\ref{lem:repres}),
but since $u$ is assumed to be only $C^1$ up to the boundary 
(and $C^2$ only outside of the corners), $\Delta u$ need not be integrable in $\Omega$.
To overcome this, we use an approximation of $\Omega$ by subdomains which avoid the corners.
Namely, denoting $\tilde\Gamma=\{0,L\}\times \partial B'_R$ the nonsmooth part of $\partial\Omega$
and $\tilde\Omega_r=\{x\in \Omega;\ {\rm dist}(x,\tilde\Gamma)>r\}$.
We may select a sequence $\{\Omega_j\}_{j\ge 1}$ of smooth domains
such that 
	\be\label{propOmegaj}
	\tilde\Omega_{1/j}\subset \Omega_j\subset \Omega
\quad\hbox{and}\quad
\lim_{j\to\infty}\int_{\partial\Omega_j\setminus\partial\Omega} \diff \! \sigma^j_y =0,
\ee
where $\diff \! \sigma^j_y$ denotes the surface measure on $\partial\Omega_j$.

Fix $\tau\in(0,T)$ and set $c_\tau(t)=c(t+\tau)$, $a_\tau(t)=\int_\Omega \nabla(f(c_\tau))dx$,
$h=-a_\tau\cdot\nabla c_\tau$.
Let $t\in(0,T-\tau)$, $\eps\in(0,t)$, $x\in\Omega$ and $j\ge 1$.
Integrating by parts in time and space the expression 
$\int_0^{t-\eps} \int_{\Omega_j} G(x,y,t-s) (u_t-\Delta u)(y,s)\diff \! y\diff \! s$
(where $G$ still denotes the Neumann heat kernel of $\Omega$),
we obtain
$$ 
\begin{aligned}
&\int_{\Omega_j} G(x,y,\eps)c_\tau(y,t-\eps)\diff \! y-\int_{\Omega_j} G(x,y,t)c(y,\tau)\diff \! y 
=\int_0^{t-\eps} \int_{\Omega_j} G(x,y,t-s) h(y,s) \diff \! y\diff \! s \\
&\qquad+\int_0^{t-\eps} \int_{\partial\Omega_j} 
[c_\tau(y,s)D_yG(x,y,t-s) -G(x,y,t-s) \nabla c_\tau(y,s)]\cdot\nu_j \diff \! \sigma^j_y  \diff \! s
\end{aligned}
$$ 
where $\nu_j$ denotes the outer normal to $\Omega_j$.
Using  \eqref{GaussEst},  \eqref{propOmegaj}, $\sup_{\overline\Omega\times[\tau,T]}(|c|+|\nabla c|)<\infty$,  $\partial_\nu G=0$ on 
$\partial\Omega$, we may pass to the limit $j\to\infty$,  
to obtain
$$ 
\begin{aligned}
&\int_\Omega G(x,y,\eps)c_\tau(y,t-\eps)\diff \! y-\int_\Omega G(x,y,t)c(y,\tau)\diff \! y 
=\int_0^{t-\eps} \int_\Omega G(x,y,t-s) h(y,s) \diff \! y\diff \! s \\
&\qquad-\int_0^{t-\eps} \int_{\partial\Omega} 
G(x,y,t-s) \partial_\nu c_\tau(y,s) \diff \! \sigma_y  \diff \! s.
\end{aligned}
$$ 
Using the boundary conditions in the last integral, passing to the limit $\eps\to 0$, 
then integrating by parts as in the proof of Lemma~\ref{lem:repres}
and finally letting $\tau\to 0$, we conclude that $c$ satisfies \eqref{NewRepresApp}.

To show \eqref{contTast}, assume for contradiction that $\sup_j \|c(t_j)\|_\infty<\infty$ for some sequence $t_j\to T^*$.
For $j$ sufficiently large, by Step~1 and in view of \eqref{lowerT}, we may find a mild solution $\tilde c$,
with initial data $c(t_j)$,  on some interval $[t_j,T_1]$ with $T_1>T^*$. Since $\tilde c$ is classical by 
assertion~(i) and coincides with $c$ on $[t_j,T^*)$ by the already established uniqueness part of assertion~(v),
this produces a classical solution on $[0,T_1]$, which is also a mild solution by \eqref{NewRepresApp},
hence contradicting the definition of $T^*$.
\end{proof}

\end{document}